\mathchardef\dashmod="2D
\newcommand{ \inj}{ \hookrightarrow}
\newcommand{ \surj}{ \twoheadrightarrow}
\newcommand{\coker}{\mathrm{coker}}
\newcommand{\ext}{\mathrm{Ext}}
\renewcommand{\hom}{\mathrm{Hom}}
\newcommand{\sthom}{\underline{\mathrm{Hom}}}
\newcommand{\stext}{\underline{\mathrm{Ext}}}
\newcommand{\Z}{\mathbb{Z}}
\newcommand{\F}{\mathbb{F}}
\newcommand{\E}{\mathcal{E}}
\newcommand{\st}{\mathcal{S}\mathrm{tab}}
\newcommand{\stinf}{\mathcal{S}\mathrm{tab}_{\infty}}
\newcommand{\gm}{\mathbb{G}_m}
\newcommand{\Pic}{\mathrm{Pic}}
\newcommand{\eq}{\cong}
\newcommand{\steq}{\simeq}
\newcommand{\colim}{\mathrm{colim}}
\newcommand{\holim}{\mathrm{holim}}
\newcommand{\A}{\mathcal{A}}
\newcommand{\B}{\mathcal{B}}
\renewcommand{\P}{\mathcal{P}}
\newcommand{\C}{\mathcal{C}}
\newcommand{\D}{\mathcal{D}}
\newcommand{\I}{\mathcal{I}}
\newcommand{\un}{\mathbb{1}}
\newcommand{\Amod}{{ }_{A}\textbf{Mod}}
\newcommand{\spc}{\mathrm{Spc}}
\newcommand{\supp}{\mathrm{Supp}}
\newcommand{\proj}{\mathbf{Proj}}
\newcommand{\ho}{\mathit{ho}}
\theoremstyle{definition}
\newtheorem{de}{Definition}[section]
\newtheorem{nota}[de]{Notation}
\theoremstyle{plain}
\newtheorem{thm}[de]{Theorem}
\newtheorem{lemma}[de]{Lemma}
\newtheorem{pro}[de]{Proposition}
\newtheorem{cor}[de]{Corollary}
\newtheorem{warning}[de]{Warning}
\newtheorem*{thm*}{Theorem}
\newtheorem*{lemma*}{Lemma}
\newtheorem*{pro*}{Proposition}
\newtheorem*{cor*}{Corollary}
\theoremstyle{remark}
\newtheorem{rk}[de]{Remark}
\newtheorem{ex}[de]{Example}
\title[Picard Hopf]{Local study of stable module categories via tensor triangulated geometry}
\author{Nicolas Ricka}
\address{Department of Mathematics, Wayne State University,
 Detroit, MI 48202}
\email{nicolas.ricka@wayne.edu}
\thanks{The author thanks Antoine Touze for the numerous discussions that were the starting point of this project, and Bob Bruner for his remarks and comments about stable module categories throughout the writing of this paper}
\keywords{Steenrod algebra, tensor-triangulated geometry, localization, Picard group, stable module category, Margolis homology}
\subjclass[2000]{18E30,20J05,55S10,55P42,19L41}
\begin{document}

\begin{abstract}
We investigate the particular properties of the stable category of modules over a finite dimensional cocommutative graded connected Hopf algebra $A$, via tensor-triangulated geometry. This study requires some mild conditions on the Hopf algebra $A$ under consideration (satisfied for example by all finite sub-Hopf-algebras of the modulo $2$ Steenrod algebra). In particular, we study some particular covers of its spectrum of prime ideals $\spc(A)$, which are related to Margolis' Work.

We then exploit the existence of Margolis' Postnikov towers in this situation to show that the localization at an open subset $U$ of $\spc(A)$, for various $U$, assembles in an $\infty$-stack. 

Finally, we turn to applications in the study of Picard groups of Hopf algebras and localizations in the stable categories of modules.
\end{abstract}

\maketitle

\section*{Introduction}

Let $A$ be a finite dimensional sub-Hopf-algebra of the Steenrod algebra. From the category of graded modules over it, one can construct the so-called stable module category, which is obtained by killing the projective modules.

The various properties of the stable category of modules over sub-Hopf-algebras of the Steenrod algebra have striking consequences in stable homotopy theory, since this category governs the $E_2$-page of the Adams spectral sequence. For instance, localization and periodicities can be seen through the eyes of the stable categories of modules (see \cite{Pal99}).
Some invariant of this purely algebraic category can moreover have surprising consequences, for instance, when $A$ is the sub-Hopf algebra of the modulo $2$ Steenrod algebra generated by $Sq^1$ and $Sq^2$, the computation of the Picard group (the group of invertible elements with respect to the tensor product) in the stable category of $\A(1)$-modules is the key element in the proof of the unicity of $bo$ given by Adams and Priddy in \cite{AP76}.

The stable category of $A$-modules is highly structured. Indeed, it is the homotopy category of a stable monoidal infinity category. Denote it $\stinf(A)$.

In particular, its associated homotopy category $st(A):= \ho(\stinf(A))$ is a tensor-triangulated category, and it makes sense to talk about its spectrum $\spc(A)$ (in the sense of Balmer, see \cite{Bal04}). Moreover, for any  open subset $U$ of $\spc(A)$, we can consider the localization functor:
$$ \st(A) \rightarrow \st(A)(U).$$

 Our aim is to play with both the tensor-triangulated structure on the homotopy category $\st(A)$ and the infinity-category structure on $\stinf(A)$. 
 
 For generalities about tensor-triangulated geometry, we refer to the work of Balmer \cite{Bal04,Bal10}. In this paper, we are settling gluing results for such categories. Those types of results are already studied in great generality in the work of Balmer and Favi \cite{Bal07,BF11} (we will recall their main results in Section \ref{sec:ttg}). Note however that since we restrict our attention to stable categories of modules, we are able to obtain a much more rigid gluing result in this case.
 
The main result here is that there is a good Grothendieck topology (called the segmental topology, because its opens are associated to sequences of contiguous Margolis operations, see Subsection \ref{subsec:segmentalopen}), such that

\begin{thm*}[Theorem \ref{thm:inftystack}]
The functor
$$Open(\spc(A))^{op} \rightarrow \text{stable}\dashmod\infty\dashmod\otimes\dashmod Cat,$$
which sends an open $U$ to $\stinf(A)(U)$ is a stack (of $\infty$-categories) for the segmental topology. 
\end{thm*}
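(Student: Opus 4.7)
The plan is to show that the presheaf $F: U \mapsto \stinf(A)(U)$ satisfies $\infty$-categorical Čech descent for every cover in the segmental topology. The first reduction is to a basis: since the segmental opens are indexed by sequences of contiguous Margolis operations (Subsection \ref{subsec:segmentalopen}), they form a basis with a very constrained combinatorics, and one checks descent just for binary covers $\{V_1, V_2\}$ of a basic segmental open $U$. The required equivalence then takes the shape
$$\stinf(A)(U) \steq \stinf(A)(V_1) \times_{\stinf(A)(V_1 \cap V_2)} \stinf(A)(V_2).$$

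To establish this, I would combine two inputs. At the triangulated level, the Balmer--Favi gluing theorem recalled in Section \ref{sec:ttg} identifies $\st(A)(U)$ with the category of compatible gluing data on $V_1, V_2, V_1 \cap V_2$; this gives the comparison on isomorphism classes of objects and on $\pi_0$ of mapping spaces. To promote this to an equivalence of stable symmetric monoidal $\infty$-categories, I would use the Margolis Postnikov tower highlighted in the abstract: given descent data $(M_1, M_2, \alpha)$, the tower produces, through a finite sequence of homotopy pullbacks indexed by the Margolis operations cutting out the cover, a canonical global object $M \in \stinf(A)(U)$ together with compatible identifications. Functoriality of the Postnikov construction supplies a candidate inverse to the comparison functor, and a standard computation of mapping spectra using the same filtration, resolving both source and target into layers controlled by $V_1$, $V_2$, and $V_1 \cap V_2$, yields full faithfulness on mapping spaces and not merely on their $\pi_0$.

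The main obstacle, as I see it, is the $\infty$-categorical coherence: Balmer--Favi asserts gluing only at the 1-categorical level, so one must verify that the Postnikov-tower reconstruction is sufficiently natural, and compatible with the tensor product, to define a morphism in the $\infty$-category of stable symmetric monoidal $\infty$-categories, rather than merely a correspondence on objects and homotopy classes. This is precisely where the rigidity of the stable module category setting is exploited: the Margolis operations provide canonical, functorially chosen homotopy fibers at every stage of the tower, so that the gluing procedure is not an unstructured choice but a canonical operation in $\stinf(A)$. Once this coherence is established, essential surjectivity on objects (via the tower) and the equivalence of mapping spaces (via triangulated gluing enhanced by Postnikov resolution) together deliver the $\infty$-categorical descent, and hence the stack property.
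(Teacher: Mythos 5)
Your high-level strategy matches the paper's: Balmer--Favi gluing supplies the triangulated-level statement, and Margolis' Postnikov towers are what make the lift to the $\infty$-categorical setting possible. The paper also organizes its proof around separate verifications of fullness/faithfulness (Proposition \ref{pro:ff}) and gluing of descent data (Proposition \ref{pro:es}), which lines up with your plan.

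However, there is a genuine gap around your claim that ``one checks descent just for binary covers.'' In the segmental topology a cover of $U_{[a,b]}$ is a partition $\{U_{I_1}, \hdots, U_{I_n}\}$ into contiguous sub-intervals, and while any such cover can be written as an iterated binary cover $\{U_{I_1}, U_{I_2 \cup \cdots \cup I_n}\}$ followed by further refinements, this reduction is not formal: one must verify that the object produced by iterated binary gluing is independent of the chosen order, which amounts exactly to showing that the cocycle condition $\phi_{1,2}\phi_{2,3} = \phi_{1,3}$ forces uniqueness of the result. This is where the paper spends most of its effort. In its $n=3$ case of Proposition \ref{pro:es} it analyzes the difference $g : \Omega L_{[n_2+1,N]} X_3 \to X'_{12}$ between two candidate gluings, shows it vanishes on $U_2$ so factors as a map $\tilde g : \Omega L_{[n_2+1,N]} X_3 \to L_{[1,n_1]} X_1$, and then uses the cocycle condition to conclude that $\tilde g = 0$. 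Your proposal does not engage with this step at all; the phrase ``compatible identifications'' silently assumes it. You are right that in principle one could instead deduce uniqueness from full faithfulness of $F_{desc}$ (which you claim to establish via a filtration of mapping spectra), but then the burden moves there: your sketch of the mapping-spectrum computation would need to be carried out across the full $\mathrm{\check{C}ech}$ nerve of an $n$-fold cover, not just a binary one, and one still needs an induction of the type the paper runs. You should either make the induction explicit (as the paper does) or cite and verify the hypotheses of a concrete reduction-to-binary-covers lemma; neither is present as written.

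A smaller point: ``Functoriality of the Postnikov construction supplies a candidate inverse'' is a stronger claim than the paper actually makes or needs. The paper does not produce an inverse functor; it verifies essential surjectivity by exhibiting a gluing object and then separately proves full faithfulness. If you want to go the route of constructing an inverse, you would need to address functoriality of Margolis' killing construction as an $\infty$-functor, which is itself a nontrivial coherence statement; the paper avoids this by working with objects and mapping spectra directly.
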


In clear, this means that the local categories of modules glue well along any segmental cover (that is, a cover for the segmental topology). The reader who is familiar with the stable category of $\E(1)$-modules, or $\A(1)$-modules recognizes in this result the classical approach to study $\st(A)$ in these cases: considering first the categories of $Q_0$-acyclic (resp. $Q_1$-acyclic) modules, and then adressing the extension problems (see \cite{Ad74,Ad71,AM74,Mar83}).

To show this, we essentially use the gluing results of Balmer-Favi \cite{Bal07}. This result holds in great generality, but fail to produce actual gluings of $\infty$-categories without extra assumptions. Here, we will profit from some additional structure which exist in our specific context: Margolis' Postnikov towers for modules, as constructed first by Margolis \cite{Mar83}, and used heavily in \cite{Pal99}.

A first application of this is the existence of Mayer-Vietoris spectral sequences for the computation of Picard groups (Theorem \ref{thm:mayervietoris}). As the hypothesis of this Theorem depends on the actual definition of the segmental topology, which is far too involved for this introduction, we will only write down a consequence of this Theorem :

\begin{pro*}[Proposition \ref{pro:localdetection}]
For all partition $1 = i_0 < i_1 < \hdots i_n = N$ of the Margolis operations in $A$, the Picard groups of the stable category of $A$-modules is detected over $U_{[i_k,i_{k+1}]}$, meaning that the morphism of abelian groups
\begin{equation*}
\Pic(A) \rightarrow \bigoplus_k \Pic(U_{[i_k,i_{k+1}]}),
\end{equation*}
where $\Pic(U_{[i_k,i_{k+1}]})$ is the Picard group in the local category $\st(A)(U_{[i_k,i_{k+1}]})$, is injective.
\end{pro*}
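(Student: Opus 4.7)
The plan is to specialize the Mayer--Vietoris spectral sequence of Theorem~\ref{thm:mayervietoris} to the cover of $\spc(A)$ determined by the given partition.

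First I would verify that the family $\{U_{[i_k, i_{k+1}]}\}_{k=0}^{n-1}$ is a segmental cover. This is combinatorial: since $i_0 = 1$ and $i_n = N$, every Margolis operation lies in some interval $[i_k, i_{k+1}]$ of the partition, so these segmental opens cover $\spc(A)$. The partition structure also makes the cover \emph{linear}: consecutive intervals meet in the one-point segmental open $U_{\{i_{k+1}\}}$ while non-consecutive intervals are disjoint, so triple intersections in the Cech nerve are empty and the nerve is one-dimensional.

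Second, I would feed this cover into the spectral sequence, which converges to the homotopy groups of the Picard space of $\stinf(A)$; its $E_2$-page is the Cech cohomology of the cover with coefficients in the sheaves of homotopy groups of the local Picard spaces, $\pi_0 = \Pic$ and $\pi_1 = \un^\times$. Because the Cech nerve is one-dimensional, only $\check{H}^0(\Pic)$ (contributing at $E_2^{0,0}$) and $\check{H}^1(\un^\times)$ (contributing at $E_2^{1,1}$) can contribute to $\Pic(A)$. The edge homomorphism $\Pic(A) \to E_\infty^{0,0} \hookrightarrow \bigoplus_k \Pic(U_{[i_k, i_{k+1}]})$ is exactly the morphism in the statement, and its kernel is a subquotient of $\check{H}^1(\un^\times)$.

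The concluding and most delicate step is to show that this first Cech cohomology vanishes, i.e.\ that the Cech differential
\[
\bigoplus_k \un^\times\bigl(U_{[i_k, i_{k+1}]}\bigr) \longrightarrow \bigoplus_k \un^\times\bigl(U_{\{i_{k+1}\}}\bigr)
\]
is surjective. This is where the specific structure of stable module categories enters. Using the explicit description of the local categories provided by Margolis' Postnikov towers, the units of the structure sheaf on a segmental open should reduce essentially to the graded ground-field units, and the restrictions to the single-point segmental opens should be surjective. I expect this unit-group analysis to be the main obstacle, since it exploits the rigidity specific to stable module categories rather than formal tensor-triangulated geometry alone. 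Once the vanishing is in hand, the spectral sequence immediately yields the claimed injectivity.
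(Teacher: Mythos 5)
Your overall strategy --- run the descent machinery of Theorem \ref{thm:mayervietoris} for this cover and kill the off-edge contributions to $\pi_0$ of the Picard space --- is reasonable, and your identification of the edge homomorphism with the map in the statement is correct. But the reduction to the single obstruction group $\check{H}^1(\un^\times)$ rests on your claim that non-consecutive intervals of the partition give \emph{disjoint} opens, and that claim is false. The space $\spc(A)\cong\proj(H^{*,*}(A))$ is irreducible (up to F-isomorphism $H^{*,*}(A)$ is the domain $\F[p_1,\dots,p_N]$), so it has a generic point lying in every non-empty open; the paper's own example makes this explicit for $\A(1)$, where $U_0\cap U_1=\{S^{(01)}\}\neq\emptyset$ even though the underlying segments $\{1\}$ and $\{2\}$ are disjoint, and $S^{(01)}$ is a generic point. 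Consequently, for $n\geq 3$ all multiple intersections are non-empty, the \v{C}ech nerve is not one-dimensional, and the kernel of the edge map is filtered by all of the groups $E_\infty^{s,s}$ for $s\geq 1$, where $E_2^{s,s}=\check{H}^s(\pi_s\gm)$ and $\pi_s\gm(V)\cong\stext^{1-s}_V(\un,\un)$ for $s\geq 2$. These negative Tate groups of the unit in the local categories are not zero in general, and your argument says nothing about them.

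The second gap is that even the piece you do isolate --- the vanishing of $\check{H}^1(\un^\times)$ --- is left as an expectation. What the paper actually proves is stronger and makes your surjectivity question moot: for each relevant open $U$ the group $\pi_1\gm(\stinf(A)(U))=\mathrm{Aut}_{\st(A)(U)}(\un)$ is \emph{trivial}. The argument is a calculus-of-fractions one: a $U$-local automorphism of $\un$ is represented by a roof $\un\leftarrow X\rightarrow\un$ whose legs have cofiber supported on $U^c$, hence induce isomorphisms on $H(-;p_c)$ for every $c$ in the corresponding segment, and a dimension count shows there is at most one such map out of a given $X$; composing two of them then forces them to agree. With these coefficient groups identically zero, every \v{C}ech group built from $\pi_1\gm$ vanishes for free --- there is nothing to map onto. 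Finally, the paper sidesteps the higher \v{C}ech terms altogether by not running the $n$-fold spectral sequence: it iterates the two-open Mayer--Vietoris long exact sequence (Corollary \ref{cor:mvspca}), where triviality of $\mathrm{Aut}(\un)$ over the single intersection $(U_1\cup\dots\cup U_{k})\cap U_{k+1}$ is exactly what is needed for injectivity at each stage of the induction. If you want to salvage the global-spectral-sequence route you would additionally have to control $\check{H}^s(\stext^{1-s}(\un,\un))$ for all $s\geq 2$; the inductive two-open argument avoids this entirely.
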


The previous proposition generalizes Bob Bruner's results in the category of $\A(1)$-modules (see \cite{Br14}).

The first part of this paper consists in the recollection about the three theories we use here, namely \begin{itemize}
\item the stable category of modules,
\item tensor-triangulated geometry,
\item infinity-stacks.
\end{itemize}

Note that the point of view adopted here is somewhat different than the one usually adopted in the literature, since \begin{itemize}
\item we emphasize the fact that the stable category actually comes with a canonical stable monoidal infinity category structure, which plays a crucial role in the arguments,
\item the notion of spectrum in tensor-triangulated geometry is the one defined by Balmer, but the associated localizations are understood in the $\infty$-categorical setting,
\item our notion of infinity-stacks is defined to be a sheaf of $\infty$-categories (and not necessarily $\infty$-groupoids, as usually considered).
\end{itemize}

The second part contains the two main results: the identification of the spectrum of the stable category of modules over a large class of Hopf algebras (including the sub-Hopf-algebras of the Steenrod algebra) in Section \ref{sec:spectrumhopf}, and the proof of the main theorem in Section \ref{sec:stacklocal}.

The last part of the paper is dedicated to applications. The reader which is already familiar with the stable category of $\A(1)$-modules can follow Section \ref{sec:toy} in parallel to the rest of the present paper, as this makes explicit all the constructions we are doing in that particular case.

\tableofcontents

\part{Set-up}

\textbf{Conventions : }we always work over the field with two elements, denoted by $\F$. All the Hopf algebras under consideration are cocommutative finite dimensional graded connected Hopf algebras, except when it is explicitely specified otherwise. Capitalized letters $A,B,\hdots$ denotes generic Hopf algebras, and the symbols $\A(n), \E(n)$  denote the classical sub-Hopf-algebras of the (non finite dimensional) Steenrod algebra $\A$ generated by the first $(n+1)$ Steenrod squares $Sq^{2^i}$, for $0 \leq i \leq n$ (respectively the first $(n+1)$ Milnor operations, $Q_i$, for $0 \leq i \leq n$).\\

Let $A$ be a Hopf algebra. The category of finitely generated modules over $A$ is denoted by $\Amod^{nc}$. The decoration $nc$ is here to emphasize the fact that there are some non-compact objects in this category. We choose to emphasize this, as the assumption that all objects are compact is a central assumption in many results of tensor-triangulated geometry.
The full subcategory of $\Amod^{nc}$ whose objects are compact (\textit{i.e.} finitely generated as vector spaces) is simply denoted $\Amod$. In general, a category which has non-compact objects will always be denoted this way, for example $\C^{nc}$, and $\C$ will denote its full subcategory spanned by compact objects. \\

Finally, we use the term $\otimes$-triangulated category as a shorthand for a symmetric monoidal closed triangulated category, in which the tensor is an exact functor in both variables. In such categories, we will stick to the notation $\Omega^{-1}$ for the suspension functor, whereas $\Sigma$ denotes the shift (that is, the regrading) in graded categories.

\section{The stable module category}

\subsection{The stable module category of a finite dimensional Hopf algebra as a stable $\infty$-$\otimes$-category}

Let $A$ be a finite dimensional cocommutative graded unital Hopf algebra. We first recall briefly the definition of the stable category of $A$-modules.

\begin{de}
We say that a map $f: X \rightarrow Y$ in the category $\Amod^{nc}$ is \begin{itemize}
\item a weak equivalence if both its kernel and cokernel are free modules,
\item a cofibration if $f$ is injective,
\item a fibration if $f$ is surjective.
\end{itemize}
\end{de}

\begin{rk}
Since a finite dimensional Hopf algebra is in particular a Frobenius algebra, the notions of free, projective and injective coincide.
\end{rk}

\begin{pro}
The classes of fibration, cofibration, and weak equivalences give to $\Amod^{nc}$ the structure of a simplicial monoidal model category.

Moreover, its homotopy category is the usual stable model category of $A$-modules.
\end{pro}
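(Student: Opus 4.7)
The plan is to reduce the statement to the standard theory of Frobenius categories, combined with the Hopf algebra structure to handle the monoidal part. The key structural input is the preceding remark: since $A$ is finite dimensional cocommutative Hopf, it is a Frobenius algebra, so the classes of free, projective, and injective modules in $\Amod^{nc}$ all coincide, and every short exact sequence with a free summand at either end splits.

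First I would verify the three model category axioms. Closure under retracts for the three distinguished classes is immediate. Two-out-of-three for weak equivalences follows from the long exact sequence computing $\ext^*_A$: a morphism is a weak equivalence precisely when it becomes an isomorphism in the quotient category where morphisms factoring through a free module are killed, equivalently when it induces isomorphisms on $\ext^*_A(-,N)$ for all $N$. For the lifting axioms, the crucial observation is that a trivial cofibration $i \colon X \inj Y$ is an injection whose cokernel is free, hence free (= injective) implies $i$ admits a retraction, so $i$ has the left lifting property against every fibration; dually for trivial fibrations using projectivity. Factorizations are constructed in the usual Quillen way: to factor $f \colon X \to Y$ as cofibration followed by trivial fibration, take $X \inj X \oplus F \surj Y$ where $F$ is a free module surjecting onto $Y$; dually, using the Frobenius property, factor as trivial cofibration followed by fibration by embedding $X$ into a free module $F'$ and taking the pushout.

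Next I would verify the pushout-product (monoidal model) axiom. The tensor product is $\otimes_\F$ with diagonal action via the coproduct of $A$. The decisive point is a well-known consequence of the Hopf algebra structure: for any $A$-module $M$, the module $A\otimes_\F M$ (with diagonal action) is free as an $A$-module, isomorphic to $A\otimes_\F M^{\mathrm{triv}}$ via the antipode/coproduct twist. Consequently, tensoring a free module with an arbitrary module is free, so the pushout-product of a cofibration with a cofibration is a cofibration, and this pushout-product is trivial as soon as one factor is trivial (since tensoring a free cokernel with anything yields a free cokernel). The unit axiom is vacuous since $\F = A/A_{>0}$ is cofibrant-fibrant after the identification above. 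For the simplicial enrichment I would use Dold--Kan: define $\mathrm{Map}(M,N)_n = \hom_A(M \otimes_\F N_*(\Delta^n; \F), N)$, where $N_*$ denotes normalized chains; the axiom SM7 then follows from the pushout-product axiom applied to the standard cofibrations $\partial\Delta^n \inj \Delta^n$, using that the chains on a simplicial set land in free $\F$-modules and hence tensoring preserves the relevant classes.

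Finally, the identification of the homotopy category with the classical stable module category of $A$ is Happel's theorem applied to the Frobenius exact category $(\Amod^{nc}, \text{projective-injectives})$: inverting weak equivalences is the same as forming the quotient by the ideal of morphisms factoring through a free module, which is the classical definition. I expect the only genuinely delicate point to be the verification of SM7 for the chosen simplicial structure; everything else is essentially formal once the Frobenius property and the Hopf-algebraic freeness of $A\otimes_\F M$ are in hand.
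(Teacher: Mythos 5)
The paper's own ``proof'' is a one-line citation to \cite[Example 2.4(v)]{SS03}, so your direct verification is a genuinely different route, and the monoidal part of it --- reducing the pushout--product axiom to exactness of $\otimes_\F$ and to the freeness of $A\otimes_\F M$ with the diagonal action --- is exactly right. However, there is a real gap at the first step. You assert that a map has free kernel and cokernel \emph{if and only if} it becomes an isomorphism in the quotient by maps factoring through frees, and you deduce two-out-of-three from this. Only one implication holds. For $A=\F[x]/x^2$, multiplication by $x$ on $A$ is a stable equivalence (source and target are stably trivial) but its kernel and cokernel are both the non-free module $\F$. Worse, the class of maps with free kernel and cokernel genuinely fails two-out-of-three: the composite of $a\mapsto(a,0)\colon A\inj A\oplus A$ (kernel $0$, cokernel $A$) with $(a,b)\mapsto xa+b\colon A\oplus A\surj A$ (kernel $\cong A$, cokernel $0$) is multiplication by $x$, whose kernel and cokernel are not free. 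So with the proposition's literal definition of weak equivalence there is no model structure to be had; the weak equivalences must be taken to be the stable equivalences (as in \cite{SS03} and in Hovey's treatment of Frobenius categories), after which two-out-of-three is automatic and your Frobenius-splitting arguments correctly identify the trivial cofibrations (resp.\ trivial fibrations) as the injections with free cokernel (resp.\ surjections with free kernel). Your proof needs to be rerouted through that definition rather than patched by the false equivalence.

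Two smaller problems. Your factorization $X\inj X\oplus F\surj Y$, with $F$ free surjecting onto $Y$, is the \emph{trivial-cofibration--fibration} factorization, not the cofibration--trivial-fibration one: the kernel of $X\oplus F\to Y$ need not be free (take $Y=0$). The other factorization is the dual recipe $X\inj Y\oplus I\surj Y$ with $I$ a free module containing $X$. Finally, the formula $\mathrm{Map}(M,N)_n=\hom_A(M\otimes N_*(\Delta^n;\F),N)$ is not well formed, since $N_*(\Delta^n;\F)$ is a chain complex rather than an $A$-module; making the simplicial enrichment precise (via Dold--Kan applied to a chain-level enrichment, or via framings) is the one genuinely non-formal ingredient, and is precisely what the citation to \cite{SS03} is carrying in the paper's version of the argument.
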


\begin{proof}
This is \cite[Example 2.4.(v)]{SS03}.
\end{proof}

\begin{cor} \label{cor:hofiber}
The homotopy fiber of a surjective map is its kernel, and the homotopy cofiber of an injective map is its cokernel.
\end{cor}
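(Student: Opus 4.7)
The plan is to invoke the standard computation of homotopy (co)fibers in a model category, using the very flexible form of the model structure just recalled: fibrations are surjections, cofibrations are injections, and weak equivalences are maps with free kernel and cokernel.

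First I would observe that every object $M$ of $\Amod^{nc}$ is both fibrant and cofibrant: the terminal map $M \to 0$ is surjective (hence a fibration) and the initial map $0 \to M$ is injective (hence a cofibration). Consequently, no fibrant or cofibrant replacement is ever needed to compute homotopy (co)limits of diagrams of the shape $M \to N \leftarrow P$ or $M \leftarrow N \to P$.

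For the first assertion, consider a surjective map $f : X \twoheadrightarrow Y$. Since $f$ is already a fibration between fibrant objects, the homotopy pullback of $0 \to Y \xleftarrow{f} X$ agrees with the strict pullback in $\Amod^{nc}$. Computed in the underlying abelian category, this strict pullback is precisely $\ker f$, which is therefore the homotopy fiber of $f$. Dually, for an injective map $f : X \hookrightarrow Y$, the map $f$ is already a cofibration between cofibrant objects, so the homotopy pushout of $0 \leftarrow X \xhookrightarrow{f} Y$ coincides with the strict pushout, i.e.\ $\coker f = Y/X$.

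There is no real obstacle here; the only point requiring mild care is to check that the pullback and pushout computed in the abelian category $\Amod^{nc}$ are the ones used in the model-categorical definition of homotopy (co)fibers, but this is immediate since the model structure is defined on this same abelian category and its fibrations, cofibrations and terminal/initial object all match the abelian structure.
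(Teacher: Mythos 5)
Your argument is correct and is exactly the standard argument the paper has in mind (the paper states this corollary without proof, treating it as immediate from the model structure just described): all objects are bifibrant, the model category is therefore left and right proper, and strict (co)limits along (co)fibrations compute homotopy (co)limits, so the kernel of a fibration and cokernel of a cofibration are the homotopy fiber and cofiber.
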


\begin{de}
Let $\st(A)^{nc}$ be the homotopy category of $\Amod^{nc}$.
\end{de}

As $\Amod^{nc}$ is a simplicial model category, there is an $\infty$-category associated to it, which captures the homotopy theory described by $\Amod^{nc}$ (see for example \cite[Proposition A.3.7.6]{HTT}, although we need a finer result here, to encompass the stability and tensor structure (see \cite[Proposition 1.5.6]{HAII})). \\

The tensor product over $\F$ of $A$-modules can be given an action of $A$, induced by the coproduct $\Delta : A \rightarrow A \otimes A$. This tensor product of $A$-modules is part of a symmetric monoidal closed structure on $\Amod$.

\begin{de}
Let $\un$ be the unit for this monoidal structure, that is, the one dimensional $A$-module concentrated in degree zero.
The functors $\otimes$ and $F(-,-)$ denotes the tensor product (over $\F$) and the function $A$-module respectively.
\end{de}

Recall that the category $\st(A)^{nc}$ is a stable category, in the sense of \cite[Section 9.6]{HPS}. In particular, it is a triangulated category. The suspension functor has the following explicit description.

\begin{de}
Let $X$ be an $A$-module. The shift (or suspension) in the category $\st(A)^{nc}$ is 
$$ \Omega^{-1} X := \coker( X \rightarrow I)$$
where $X \rightarrow I$ is the injective envelope of $X$.
\end{de}

\begin{pro}
The functor $\Omega^{-1}$ is invertible in the stable category. Moreover, for all $n \in \Z$, there is a canonical weak equivalence
$$ \Omega^nX \simeq \Omega^n \un \otimes X.$$
\end{pro}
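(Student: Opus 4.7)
The plan is to prove invertibility first, then to reduce the tensor identification to the cases $n = \pm 1$ via a Hopf-theoretic ``untwisting'' lemma, and finally to conclude by iteration.

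First, since $A$ is finite dimensional it is Frobenius, so projective and injective $A$-modules coincide. I would define the candidate inverse $\Omega$ by letting $\Omega X$ be the kernel of a projective cover $P \twoheadrightarrow X$; by Corollary~\ref{cor:hofiber} this is the homotopy fibre. A direct verification, or equivalently an invocation of the general fact that the suspension in a stable $\infty$-category is automatically invertible, then gives $\Omega \circ \Omega^{-1} \simeq \mathrm{id} \simeq \Omega^{-1} \circ \Omega$ on $\st(A)^{nc}$.

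The key ingredient for the tensor identification is the following standard Hopf-theoretic lemma: for any free $A$-module $F = A \otimes_\F V$ and any $A$-module $X$, the diagonal tensor $F \otimes_\F X$ is again free. Writing the coproduct in Sweedler notation as $\Delta(a) = \sum a_{(1)} \otimes a_{(2)}$, the untwisting map
$$\phi : A \otimes_\F V \otimes_\F X \longrightarrow A \otimes_\F V \otimes_\F X, \quad a \otimes v \otimes x \longmapsto \sum a_{(1)} \otimes v \otimes a_{(2)} x,$$
is an isomorphism of $A$-modules from the source equipped with the action on the first factor only to the target equipped with the diagonal action, with inverse $a \otimes v \otimes x \mapsto \sum a_{(1)} \otimes v \otimes S(a_{(2)}) x$ built from the antipode $S$; a short calculation with the counit and antipode axioms confirms this. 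In particular $I \otimes_\F X$ is injective whenever $I$ is (since projective $=$ injective, and projectives are summands of frees).

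For $n = -1$ I would then tensor the defining sequence $0 \to \un \to I \to \Omega^{-1} \un \to 0$ with $X$ over $\F$. Exactness is preserved because $\F$ is a field, so we obtain $0 \to X \to I \otimes X \to \Omega^{-1} \un \otimes X \to 0$, in which $I \otimes X$ is projective-injective by the lemma. Since any short exact sequence $0 \to X \to J \to Y \to 0$ with $J$ projective-injective presents $Y$ as the cosyzygy $\Omega^{-1} X$ in $\st(A)^{nc}$, well-defined up to canonical weak equivalence, this yields the desired identification $\Omega^{-1} X \simeq \Omega^{-1} \un \otimes X$ for $n = -1$. The case $n = +1$ is strictly dual, using a projective cover of $\un$, and the general case follows by iteration using the symmetric monoidal structure. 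The main obstacle is the untwisting lemma above: it is the only place where the Hopf algebra structure (specifically the antipode) is genuinely used, and without it one could not arrange $I \otimes X$ to stay injective, so the identification of $\Omega^{-1} \un \otimes X$ with $\Omega^{-1} X$ would break down.
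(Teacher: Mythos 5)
Your proposal is correct and follows the standard argument for stable module categories of finite-dimensional Hopf algebras; the paper states this proposition without proof, evidently taking it as a known consequence of the Frobenius property and the Hopf structure. The untwisting isomorphism $a \otimes v \otimes x \mapsto \sum a_{(1)} \otimes v \otimes a_{(2)}x$ is indeed the crucial ingredient (it is the reason the tensor product descends to the stable category at all, and the reason $I \otimes X$ stays injective), and your appeal to the Schanuel-type uniqueness of cosyzygies modulo projectives to finish the identification $\Omega^{-1} X \simeq \Omega^{-1}\un \otimes X$ is exactly right.
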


\begin{pro}
The monoidal structure on $\Amod^{nc}$ gives it the structure of a simplicial monoidal model category. Let $\stinf(A)^{nc}$ the associated stable $\infty$-$\otimes$-category.

In particular, the homotopy category of $ \stinf(A)^{nc}$ is canonically equivalent to $\st(A)^{nc}$, as a symmetric monoidal triangulated category.
\end{pro}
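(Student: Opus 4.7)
The plan is to check the axioms making $\Amod^{nc}$ into a simplicial monoidal model category, then invoke \cite[Proposition 1.5.6]{HAII} to produce both the symmetric monoidal stable $\infty$-structure on $\stinf(A)^{nc}$ and the asserted equivalence of symmetric monoidal triangulated categories at the level of homotopy categories.

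For the monoidal model axioms, the only nontrivial point is the pushout-product axiom. Since $\otimes_\F$ is exact and cofibrations are monomorphisms, the pushout-product of two cofibrations is a monomorphism, hence a cofibration. For the acyclic half, a trivial cofibration $f : X \inj Y$ is, by Corollary \ref{cor:hofiber} combined with the definition of weak equivalence, an injection whose cokernel $Y/X$ is free; the Frobenius property of $A$ (cf.\ the Remark above) then forces the short exact sequence $0 \to X \to Y \to Y/X \to 0$ to split. Given any cofibration $g : U \inj V$, the pushout-product $f \square g$ is a split injection whose cokernel is $(V/U) \otimes (Y/X)$. Since the tensor product of any $A$-module with a free $A$-module is again free (a standard consequence of the Hopf algebra structure on $A$), $f \square g$ is a trivial cofibration. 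The unit axiom is automatic, as every object of $\Amod^{nc}$ is cofibrant.

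The simplicial enrichment and its compatibility with $\otimes$ follow along the lines of \cite[Example 2.4(v)]{SS03}. With the simplicial monoidal model structure in hand, \cite[Proposition 1.5.6]{HAII} yields a symmetric monoidal stable $\infty$-category $\stinf(A)^{nc}$ whose underlying homotopy category is canonically equivalent, as a symmetric monoidal triangulated category, to $\st(A)^{nc}$; the suspension $\Omega^{-1}$ on the $\infty$-categorical side agrees with the one described via injective envelopes in the preceding proposition, as both arise from the cofiber of the canonical map $X \inj I$ into an injective envelope.

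The main obstacle I expect is the trivial-cofibration half of the pushout-product axiom, which relies on both the Frobenius property of $A$ (to split the relevant short exact sequence) and the Hopf algebra property of $A$ (so that tensor products of free modules with arbitrary modules remain free). Everything else is a matter of invoking the appropriate machinery from \cite{HAII} and \cite{SS03}.
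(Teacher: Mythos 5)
Your proposal follows the same route as the paper: the paper's proof simply invokes \cite[Proposition 1.5.6]{HAII} (together with the earlier appeal to \cite[Example 2.4(v)]{SS03}) and states that its hypotheses are ``checked using the explicit description of the model structure,'' without writing out the verification. Your explicit check of the pushout-product axiom --- exactness of $\otimes_\F$ for the cofibration half, and freeness of $(Y/X)\otimes(V/U)$ via the Hopf-algebra/Frobenius properties for the acyclic half --- correctly supplies the details the paper leaves implicit.
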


\begin{proof}
The construction of the $\infty$-$\otimes$-category associated to a symmetric monoidal model category such as $\Amod^{nc}$ is \cite[Proposition 1.5.6]{HAII}. The hypothesis of this proposition is checked using the explicit description of the model structure we have here.
\end{proof}

\subsection{Interpretation and structure of the homotopy groups of morphism spectra}

A pleasant property of the $\infty$-category $\stinf(A)^{nc}$ is that the homotopy groups of its spaces of morphisms have an algebraic description.

\begin{de}
Let $X$ and $Y$ be two $A$-modules. We make the following conventions: \begin{itemize}
\item  $\sthom_A(X,Y)$ denotes the spectrum of morphisms in $\stinf(A)^{nc}$,
\item $\stext^i_A(X,Y)$ is the group of extensions of $A$-modules $\ext^{1}_A(\Omega^{i-1}X,Y)$.
\end{itemize}
\end{de}

\begin{rk}
By construction, the functor $\stext^i_A(X,Y)$ coincides with the usual extension groups whenever $i >0$. Whenever $i=0$, this functors coincides also with $\hom_{\st(A)}(X,Y)$.

From another perspective, the functor $\stext^i_A(X,Y)$ is also the Tate functor associated to $\ext^i_A(X,Y)$.
\end{rk}

\begin{pro} \label{pro:pihomandext}
There is a natural isomorphism
$$ \pi_i(\sthom_A(-,-)) \cong \stext^{-i}_A(-,-)$$
\end{pro}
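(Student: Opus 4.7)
The plan is to reduce the statement to the case $i = 0$ using the stability of the $\infty$-category $\stinf(A)^{nc}$, and then identify morphisms in the homotopy category with Tate-Ext groups by the classical dimension-shifting argument.

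First, because $\stinf(A)^{nc}$ is a stable $\infty$-category, its mapping spectra satisfy
$$\pi_i \sthom_A(X,Y) \;\cong\; \pi_0 \sthom_A(\Omega^{-i} X, Y) \;\cong\; \hom_{\st(A)^{nc}}(\Omega^{-i}X, Y)$$
naturally in $X$ and $Y$. This uses the standard fact that $\pi_i$ of a mapping spectrum in a stable $\infty$-category computes $\hom$ in the homotopy category after suspending the source $i$ times. The content to check is that the $\infty$-categorical suspension of $\stinf(A)^{nc}$, defined abstractly as a pushout of zero maps, agrees with the algebraic $\Omega^{-1}$ of the paper, defined as the cokernel of the injective envelope. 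This follows from Corollary \ref{cor:hofiber}: for the injective envelope $X \inj I$, the homotopy pushout of $0 \leftarrow X \inj I$ in $\stinf(A)^{nc}$ has value the cokernel $I/X = \Omega^{-1}X$, and since $I$ is projective-injective it becomes zero in the stable category, so this pushout models the suspension of $X$.

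Second, it remains to identify $\hom_{\st(A)^{nc}}(\Omega^{-i}X, Y)$ with $\stext^{-i}_A(X,Y)$. For $i = 0$ this is precisely the content of the remark preceding the statement. For $i < 0$, set $n = -i \geq 1$; the classical dimension-shifting isomorphism
$$\hom_{\st(A)^{nc}}(\Omega^n X, Y) \cong \ext^n_A(X, Y)$$
follows by iterating the short exact sequence $0 \to \Omega X \to P \to X \to 0$ with $P$ projective together with the vanishing of $\ext^{\geq 1}_A(P,-)$; the identity $\stext^n_A(X,Y) = \ext^1_A(\Omega^{n-1}X, Y) = \ext^n_A(X,Y)$ (one more dimension shift) closes this case. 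For $i > 0$, apply the same $n = 1$ dimension shift to $Z := \Omega^{-i-1}X$:
$$\stext^{-i}_A(X,Y) = \ext^1_A(\Omega^{-i-1}X, Y) \cong \hom_{\st(A)^{nc}}(\Omega Z, Y) \cong \hom_{\st(A)^{nc}}(\Omega^{-i}X, Y),$$
since $\Omega$ and $\Omega^{-1}$ are mutually inverse in $\st(A)^{nc}$.

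The main obstacle lies in the first step: verifying that the abstract $\infty$-categorical suspension in $\stinf(A)^{nc}$ really coincides with the concrete algebraic $\Omega^{-1}$ defined via the injective envelope. Once that compatibility is established, the rest is pure homological algebra over the Frobenius ring $A$, relying on the agreement of projective and injective modules and on the classical representation of (Tate-)Ext groups by morphisms in the stable category. Naturality in both variables is built into each step.
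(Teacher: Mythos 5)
Your proof is correct and follows essentially the same route as the paper: both reduce to the $i=0$ case by using the stable structure to trade homotopy groups of the mapping spectrum for shifts of the source, and then identify $\pi_0$ with stable $\hom$, hence with Tate-Ext. You spell out two points the paper leaves implicit — the agreement of the abstract $\infty$-categorical suspension with the algebraic $\Omega^{-1}$ via Corollary \ref{cor:hofiber}, and the dimension-shifting identification of $\hom_{\st(A)}(\Omega^{-i}X,Y)$ with $\stext^{-i}_A(X,Y)$ — both of which are accurate and welcome additions.
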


\begin{proof}
The loop module  $\Omega X$ enters a homotopy pullback square in $\stinf(A)^{nc}$
$$\xymatrix{ \Omega X \ar[r] \ar[d] & \ar[d] 0 \\
0 \ar[r] & X } $$
identifying $\sthom_A(\Omega X,Y)$ with the suspension of $\sthom_A(X,Y)$. The identification of the homotopy group  $\pi_0(\sthom_A(X,Y))$ being clear, the result follows.
\end{proof}

We now make a few classical observations about this stable category. Recall that $H^{*,*}(A)$ denotes the bigraded abelian group $\ext^{*,*}_A(\un,\un)$, where the first grading is the homological grading, and the second one is the internal grading.

\begin{lemma}
The commutative algebra $H^{*,*}(A)$ is finitely generated. Moreover, for all $X$, $Y$, $\ext_A^{*,*}(X,Y)$ is an $H^{*,*}(A)$-module.
\end{lemma}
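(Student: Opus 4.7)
The lemma asserts three things: commutativity of $H^{*,*}(A)$, its finite generation as a graded $\F$-algebra, and the $H^{*,*}(A)$-module structure on $\ext_A^{*,*}(X,Y)$. I would treat them in that order. The commutativity is the cheapest: the symmetric monoidal structure on $\stinf(A)^{nc}$ makes $\un$ a commutative algebra object, so $\sthom_A(\un,\un)$ is an $E_\infty$-ring spectrum, and by Proposition \ref{pro:pihomandext} its bigraded homotopy groups are precisely $H^{*,*}(A)$. This forces a graded commutative ring structure. Equivalently, cocommutativity of $A$ makes $\Delta : A \rightarrow A \otimes A$ an algebra map and endows $\ext_A^{*,*}(\un,\un)$ with a tensor-product pairing compatible with the Yoneda product; an Eckmann--Hilton argument then forces commutativity.

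For finite generation, since $A$ is a finite-dimensional cocommutative graded Hopf algebra over $\F$, I would invoke the Friedlander--Suslin finite generation theorem for the cohomology of finite-dimensional cocommutative Hopf algebras, which gives the statement directly. For the running examples of sub-Hopf-algebras of the Steenrod algebra, finite generation is older and can be read off from an explicit computation via the May spectral sequence, whose $E_\infty$ page is a subquotient of a polynomial algebra on finitely many generators.

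The module structure on $\ext_A^{*,*}(X,Y)$ comes from the symmetric monoidal structure: the natural isomorphism $\un \otimes X \cong X$ and the external tensor product of extensions furnish a pairing
$$\ext_A^{*,*}(\un,\un) \otimes \ext_A^{*,*}(X,Y) \xrightarrow{\otimes} \ext_A^{*,*}(\un \otimes X, \un \otimes Y) \cong \ext_A^{*,*}(X,Y),$$
whose associativity and unitality follow from the coherence of the symmetric monoidal structure. $\infty$-categorically, this is just the statement that $\sthom_A(X,Y)$ is a module spectrum over the $E_\infty$-ring $\sthom_A(\un,\un)$. The main obstacle is the finite generation statement, which is the only genuinely nontrivial input; the commutativity and the module structure are formal consequences of the symmetric monoidal enhancement of $\st(A)^{nc}$ afforded by $\stinf(A)^{nc}$.
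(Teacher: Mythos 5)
Your proof is correct and takes essentially the same route as the paper, whose entire proof reads: the first assertion is \cite[Theorem A]{Wil81}, and the module action is given by the monoidal structure. The only substantive difference is the reference for finite generation: you invoke Friedlander--Suslin (stated for finite group schemes, i.e.\ ungraded finite-dimensional cocommutative Hopf algebras), which does apply here because at $p=2$ a graded cocommutative Hopf algebra is cocommutative after forgetting the grading, but Wilkerson's earlier theorem is the one tailored to the graded connected setting in which the paper works, and is what it cites.
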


\begin{proof}
The first assertion is \cite[Theorem A]{Wil81}. For the second part, the action is given by the monoidal structure.
\end{proof}

\subsection{Margolis Hopf algebra}

The role of particular sub-Hopf algebras of $A$, its so-called quasi-elementary sub-Hopf algebras, is central in the study of the stable category of modules over it. However, we will not stress out this point here (neither give the actual definition of a quasi-elementary sub-Hopf algebra), since we will quickly reduce to the case where these sub-algebra are simply exterior (Definition \ref{de:margolisalg}), and stick to this case. \\

Let us recall briefly why the quasi-elementary sub-Hopf algebras of $A$ captures the various features of the stable category of $A$-modules.

Note that for any sub-Hopf algebra $B$ of $A$, there is an induced forgetful functor
\begin{equation}
U_B : \stinf(A) \rightarrow \stinf(B).
\end{equation}
The general slogan is that all the features of the stable category of modules over $A$ are detected after restricting to the collection $\stinf(E)$, where $E \subset A$ run through the quasi-elementary sub-Hopf algebras of $A$. The reader interested in an $\infty$-categorical formulation of this heuristic can take a look at \cite{Ric16}.

\begin{pro}[Theorem 1.2-1.4, \cite{Pal97}]
The quasi-elementary sub-Hopf algebras of $A$ detects the following properties: \begin{itemize}
\item Nilpotence in $\ext(\un, \un)$, and in $\ext(X,X)$, for any module $X$,
\item Triviality of an $A$-module.
\end{itemize}
Moreover, there is an isomorphism modulo nilpotents
$$  H^{*,*}(A) \eq \lim_B H^{*,*}(B),$$
where the limit is taken over all the quasi-elementary sub-Hopf-algebras of $A$.
\end{pro}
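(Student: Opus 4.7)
The plan is to mirror the Quillen stratification and Chouinard-style detection theorems familiar from modular representation theory of finite groups, adapted to the Hopf-algebra setting as Palmieri does in \cite{Pal97}. The three assertions are not independent: the identification of the triviality criterion is the main engine, the nilpotence criterion follows from it via a telescope argument, and the cohomology limit statement is an $F$-isomorphism consequence.

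First I would address detection of triviality. Since $A$ is Frobenius, an object $X$ is zero in $\st(A)^{nc}$ iff $X$ is projective, so the goal is a Hopf-theoretic analog of Chouinard's theorem: $X$ is $A$-projective iff $U_B(X)$ is $B$-projective for every quasi-elementary $B \subset A$. One direction is immediate, since restriction along $B \inj A$ of a projective (equivalently injective/free) $A$-module remains projective over $B$. The substantive direction requires showing that the quasi-elementary sub-Hopf algebras are rich enough to see all non-projectivity. The idea is to identify detection with a support-variety criterion: each quasi-elementary $B$ contributes a "piece" of the cohomological support, and the union of these pieces is all of $\spc(A)$, so vanishing after restriction everywhere forces the support of $X$ to be empty, hence $X$ projective.

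Next I would deduce nilpotence detection. Given $\alpha \in \stext^n_A(X,X)$, view it as a morphism $\alpha : \Omega^{-n}X \to X$ in $\st(A)^{nc}$ and form the telescope $X[\alpha^{-1}] = \colim(X \xrightarrow{\alpha} \Omega^{-n} X \xrightarrow{\alpha} \cdots)$. Then $\alpha$ is nilpotent iff $X[\alpha^{-1}] \simeq 0$ in $\st(A)^{nc}$. Since $U_B$ is a symmetric monoidal exact functor preserving the relevant colimits, $U_B(X[\alpha^{-1}]) \simeq U_B(X)[U_B(\alpha)^{-1}]$, so vanishing after every $U_B$ means each $U_B(\alpha)$ is nilpotent. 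By the triviality criterion already proven, the global vanishing implies $X[\alpha^{-1}] \simeq 0$, hence $\alpha$ nilpotent. Specializing to $X = \un$ gives the assertion for $\ext(\un,\un)$.

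Finally the limit isomorphism modulo nilpotents: the natural restriction map $\phi : H^{*,*}(A) \to \lim_B H^{*,*}(B)$ has nilpotent kernel by the previous step applied to $X = \un$ (any element restricting to zero restricts to a nilpotent everywhere, hence is itself nilpotent). For surjectivity modulo nilpotents one invokes the appropriate transfer / norm construction along each $B \subset A$ to lift a compatible family in the limit to a class on $A$, controlling the discrepancy by a nilpotent error term; this is the usual $F$-isomorphism argument in the style of Quillen. The main obstacle, and the place where Palmieri's input is really needed, is the Chouinard-type detection in the first step: one must verify that the quasi-elementary sub-Hopf algebras provide enough restriction functors to detect projectivity, which requires a concrete understanding of the rank varieties of exterior-type sub-Hopf algebras together with a reduction argument from a general cocommutative graded connected Hopf algebra to these special sub-algebras.
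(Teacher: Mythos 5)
First, a point of comparison: the paper does not prove this proposition at all --- it is quoted as Theorems 1.2--1.4 of \cite{Pal97} and used as a black box --- so the only fair benchmark for your attempt is Palmieri's own proof, not anything internal to this paper.

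Measured against that, your outline has the right global architecture (triviality detection as the engine, nilpotence by inverting the self-map, the $F$-isomorphism as a corollary), but two of the three steps contain genuine gaps. For the triviality criterion you propose to ``identify detection with a support-variety criterion'' and argue that the quasi-elementary pieces cover $\spc(A)$. This is circular: the statement that cohomological support is stratified over the quasi-elementary sub-Hopf algebras is logically downstream of the detection theorems you are trying to prove --- indeed the homeomorphism $\spc(A)\eq\proj(H^{*,*}(A))$ of Proposition \ref{pro:projadspec} is itself established in the literature using these detection results. Palmieri's actual argument runs the other way: he filters $A$ by normal sub-Hopf algebras with quasi-elementary quotients and inducts, using vanishing lines (or planes) in the associated Lyndon--Hochschild--Serre/Adams spectral sequences, with profile-function combinatorics doing the bookkeeping. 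Second, in the nilpotence step you apply the triviality criterion to the telescope $X[\alpha^{-1}]$. That telescope is not finitely generated, and for $\alpha$ of positive homological and internal degree it need not be bounded below or above; detection-of-freeness statements in this subject require some such boundedness (compare Proposition \ref{lemma:margolisdetect}, which assumes it explicitly). Controlling this colimit is precisely where the vanishing-line input enters, so ``nilpotence follows formally from triviality detection'' does not close as stated. Finally, the surjectivity-modulo-nilpotents half of the limit statement cannot be handled by ``the appropriate transfer/norm construction'': for graded connected cocommutative Hopf algebras there is no Evens norm with the required multiplicative properties, and Palmieri produces the needed classes by a different, explicit construction. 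The injectivity-modulo-nilpotents half does, as you say, follow from nilpotence detection applied to $\un$.
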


However, finding all the quasi-elementary sub-Hopf algebras of a given Hopf algebra $A$ is a difficult problem in general. For this reason, we will restrict our attention to a particular case.

\begin{de}
A sub-Hopf-algebra $E$ of $A$ is called elementary if it is commutative, and $x^2=0$ for all $x$ in the augmentation ideal of $E$.
Denote $\E$ (or $\E_A$, if there is an ambiguity) the category of elementary sub-Hopf algebras of $A$.
\end{de}

\begin{lemma}
Elementary sub-Hopf-algebras of $A$ are quasi-elementary.
\end{lemma}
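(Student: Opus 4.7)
The plan is to unpack the definition of ``quasi-elementary'' recalled from \cite{Pal97} and verify it directly for an elementary sub-Hopf algebra $E \subset A$. Since $E$ is commutative, connected, graded over $\F$, and every element of its augmentation ideal squares to zero, the Borel--Milnor--Moore structure theorem identifies $E$ with an exterior algebra $\Lambda(x_1,\ldots,x_n)$ on a finite set of primitive generators $x_i$ in positive degree. All the verifications will come from this explicit normal form.

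First I would compute $H^{*,*}(E) = \ext_E^{*,*}(\un,\un)$ directly: the tensor product of the Koszul resolutions of $\un$ over each monogenic sub-Hopf algebra $\Lambda(x_i)$ produces the polynomial ring $\F[y_1,\ldots,y_n]$ with $y_i$ in bidegree $(1,|x_i|)$. In particular $H^{*,*}(E)$ is reduced, so the nilpotence detection clause of the quasi-elementariness axioms is automatic on $H^{*,*}(E)$ itself. Then I would invoke Margolis' theorem for exterior Hopf algebras \cite{Mar83}: a finitely generated $E$-module $M$ is free if and only if, for each generator $x_i$, the Margolis homology $H(M;x_i)$ vanishes. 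Combined with the Frobenius property of $E$, this gives the triviality-detection and nilpotence-detection clauses in $\ext_E(X,X)$, since the actions of the $y_i$ on $\ext_E(M,M)$ record exactly the non-vanishing of these Margolis homologies.

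Finally, the limit statement
\[
H^{*,*}(E) \cong \lim_{\Lambda(x_i) \subset E} H^{*,*}(\Lambda(x_i))\, \text{(modulo nilpotents)}
\]
is trivially an equality in our case because $H^{*,*}(E)$ is already the tensor product of the $H^{*,*}(\Lambda(x_i))$, matching the requirement in Palmieri's definition.

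The main obstacle is really just bookkeeping: one must check that these classical properties of exterior Hopf algebras on primitives match the precise axioms for quasi-elementariness in \cite{Pal97}, rather than a weaker or stronger variant. Once this matching is done, no further argument is needed, since each required property is an immediate consequence of the exterior-algebra normal form for $E$ and the corresponding polynomial description of its cohomology.
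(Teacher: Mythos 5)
There is a genuine gap, and it sits exactly where you flag it as ``bookkeeping''. The three conditions you verify --- reducedness of $H^{*,*}(E)$ read as ``nilpotence detection'', Margolis' freeness criterion read as ``triviality detection'', and the inverse-limit formula --- are not the definition of quasi-elementary. They are the \emph{conclusions} of Palmieri's Theorems 1.2--1.4, which are statements about the entire family of quasi-elementary sub-Hopf-algebras of $A$ acting collectively; no individual sub-Hopf-algebra is required to detect nilpotence or triviality on its own, and in general none does. The actual Definition 1.1 of \cite{Pal97} is a Serre--Quillen-type condition on $E$ itself: roughly, that no product of the relevant non-zero low-dimensional classes in $H^{*,*}(E)$ (the Hopf-algebra analogue of Serre's vanishing product of Bocksteins of degree-one classes) is nilpotent. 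Since you never state or use this condition, and explicitly defer the comparison with it to the end, your argument does not establish the lemma; the step you postpone is the entire content.

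The good news is that your opening computation is precisely the right ingredient. For an elementary $E$, Borel's theorem gives the exterior normal form and the Koszul resolution gives $H^{*,*}(E)\cong\F[y_1,\dots,y_n]$, an integral domain in which the relevant classes are the non-zero generators $y_i$; hence no product of them is zero or nilpotent and the condition of Definition 1.1 holds at once. That single observation is the whole proof, and is what the paper means by calling the lemma ``a consequence of the definition''. The subsequent appeals to Margolis' freeness criterion from \cite{Mar83} and to the limit formula are not wrong, but they are irrelevant to quasi-elementarity of a single $E$ and should be removed in favor of an explicit statement of Palmieri's definition followed by the domain argument.
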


\begin{proof}
It is stated as a lemma, but is really a consequence of the definition \cite[Definition 1.1]{Pal97}.
\end{proof}

\begin{de} \label{de:margolisalg}
A Hopf algebra is Margolis if \begin{enumerate}
\item all its quasi-elementary sub-Hopf-algebras are in fact elementary (that is exterior),
\item one can arrange the generators of these elementary subalgebras in increasing degree $$p_1, \hdots p_N.$$
\end{enumerate}
\end{de}

This sounds like a restrictive definition. However, this covers a large spectrum of natural examples. For instance, in the ungraded case, all group algebras satisfied the property expressed in the first point of Definition \ref{de:margolisalg} by Quillen's stratification result.
This property is also satisfied by the Steenrod algebra  (it is an infinite dimensional Margolis Hopf algebra), and all of its sub-Hopf algebra are Margolis as well.

It is possible to be completely explicit in the determination of all the elementary sub-Hopf algebras of the sub-algebras of the Steenrod algebra using profile functions (see \cite{Pal99}). We will see how this works in the case of $\A(1)$ in Section \ref{sec:toy}.

\subsection{Margolis homology}

We first recall the classical definition of Margolis homology in the category of $A$-modules.

\begin{de}
Let $A$ be a Margolis Hopf algebra. Let $p_k$ be one of its Margolis operations. Note that by definition,  $p_k^2=0$. The $p_k$-Margolis homology functor, denoted by $ H^*(-,p_k),$ is the functor
$$ \frac{ker(p_k)}{im(p_k)}.$$
\end{de}

\begin{lemma}
For all $i \in \Z$, the functors $H^*(-,p_k)$ and $\stext^{i,*-i|p_k|}_{\Lambda(p_k)}(\un,X)$ are naturally isomorphic.
\end{lemma}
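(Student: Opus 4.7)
The plan is to exploit the very explicit structure of the Hopf algebra $\Lambda(p_k) = \F[p_k]/(p_k^2)$, which is local, Frobenius, and two-dimensional over $\F$. I would begin by writing down a minimal free resolution of $\un = \F$ over $\Lambda(p_k)$: set $P_i := \Sigma^{i|p_k|}\Lambda(p_k)$ and let the differential $d_i : P_i \to P_{i-1}$ be multiplication by $p_k$. A direct check shows that each $d_i$ preserves internal degree (the generator of $P_i$ sits in degree $i|p_k|$ and $p_k$ raises internal degree by $|p_k|$), and that the sequence is exact, giving the minimal resolution $P_\bullet \to \un$ for $i \geq 0$. Since $\Lambda(p_k)$ is Frobenius, the resolution extends uniquely in negative homological degrees to a complete (Tate) resolution with $P_i = \Sigma^{i|p_k|}\Lambda(p_k)$ and $d_i = p_k$ for all $i \in \Z$; by standard arguments this complex computes the full bigraded object $\stext^{*,*}_{\Lambda(p_k)}(\un, X)$.

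Next I apply $\hom_{\Lambda(p_k)}(-, X)$ term by term. A degree-$j$ map from $\Sigma^{i|p_k|}\Lambda(p_k)$ to $X$ is determined by the image of the generator, yielding a natural identification
$$\hom_{\Lambda(p_k)}(P_i, X)^j \cong X_{i|p_k| + j},$$
under which the coboundary becomes multiplication by $p_k$. Thus in each fixed internal degree $j$ the Hom-complex reads
$$\cdots \xrightarrow{p_k} X_{(i-1)|p_k|+j} \xrightarrow{p_k} X_{i|p_k|+j} \xrightarrow{p_k} X_{(i+1)|p_k|+j} \xrightarrow{p_k} \cdots,$$
and its cohomology at homological position $i$ is precisely $\ker(p_k)/\mathrm{im}(p_k)$ in internal degree $i|p_k|+j$, namely $H^{i|p_k|+j}(X, p_k)$.

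Substituting $j := * - i|p_k|$ then produces the desired natural isomorphism $\stext^{i,*-i|p_k|}_{\Lambda(p_k)}(\un, X) \cong H^*(X, p_k)$ for every $i \in \Z$, with naturality in $X$ coming from the functoriality of $\hom_{\Lambda(p_k)}(P_\bullet, -)$. There is no substantial conceptual obstacle here; the only point requiring care is to keep the two gradings straight, and in particular to verify that the cumulative internal shift by $i|p_k|$ built into the Tate resolution is exactly compensated by the reindexing $* - i|p_k|$ appearing in the statement of the lemma. (Alternatively, one could bypass any explicit mention of the Tate resolution by using Proposition \ref{pro:pihomandext} and the fibre sequences $\sthom_{\Lambda(p_k)}(\Omega\un, X) \simeq \Sigma^{-1}\sthom_{\Lambda(p_k)}(\un,X)$ to propagate the identification from $i=0$, where it reduces to the standard description of stable maps out of $\un$, to all $i \in \Z$.)
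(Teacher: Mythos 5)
Your argument is correct and is exactly the paper's approach: the paper's proof consists of the single remark that the claim follows from ``the usual periodic resolution of $\un$ in the category of $\Lambda(p_k)$-modules,'' and you have simply written out that periodic (Tate) resolution, applied $\hom_{\Lambda(p_k)}(-,X)$, and tracked the internal grading shift. The bookkeeping of the bidegrees matches the statement, so nothing further is needed.
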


\begin{proof}
This can be easily seen using the usual periodic resolution of $\un$ in the category of $\Lambda(p_k)$-modules.
\end{proof}

\begin{cor}
The $p_k$-Margolis homology sends a short exact sequences of $A$-modules $$X \inj Y \surj Z$$ to a long exact sequence
$$ \hdots \rightarrow H^{*-|p_k|}(N,p_k) \rightarrow H^*(X,p_k) \rightarrow H^*(Y,p_k) \rightarrow H^*(Z,p_k) \rightarrow \hdots.$$
\end{cor}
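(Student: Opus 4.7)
The plan is to reduce the statement to the fact that stable Ext over $\Lambda(p_k)$ is a cohomological functor, and then transport the long exact sequence back to Margolis homology using the identification given in the previous lemma.

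First, I would apply the forgetful functor $U_{\Lambda(p_k)} : \stinf(A)^{nc} \rightarrow \stinf(\Lambda(p_k))^{nc}$ to the short exact sequence $X \inj Y \surj Z$. Since this functor is exact at the level of underlying $\F$-vector spaces, and since exactness of a sequence of $A$-modules is detected on the underlying vector spaces, the image is again a short exact sequence of $\Lambda(p_k)$-modules. By Corollary \ref{cor:hofiber}, a short exact sequence in $\Amod^{nc}$ (or $_{\Lambda(p_k)}\textbf{Mod}^{nc}$) identifies the kernel with the homotopy fiber of the surjection and the cokernel with the homotopy cofiber of the injection, so it represents a cofiber sequence in the stable $\infty$-category $\stinf(\Lambda(p_k))^{nc}$.

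Next, I would apply the functor $\sthom_{\Lambda(p_k)}(\un, -)$. Since $\stinf(\Lambda(p_k))^{nc}$ is a stable $\infty$-category, this functor sends cofiber sequences to fiber sequences of spectra, and hence produces a long exact sequence on homotopy groups:
\begin{equation*}
\cdots \rightarrow \pi_i \sthom_{\Lambda(p_k)}(\un, X) \rightarrow \pi_i \sthom_{\Lambda(p_k)}(\un, Y) \rightarrow \pi_i \sthom_{\Lambda(p_k)}(\un, Z) \rightarrow \pi_{i-1}\sthom_{\Lambda(p_k)}(\un, X) \rightarrow \cdots.
\end{equation*}
By Proposition \ref{pro:pihomandext}, the homotopy groups are identified with $\stext^{-i,*}_{\Lambda(p_k)}(\un, -)$, and by the previous lemma these are exactly $H^{*+i|p_k|}(-, p_k)$ after a suitable reindexing in the internal grading. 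Tracing through the degree shift on the connecting map produces the $|p_k|$-shift on the $Z$-term of the long exact sequence stated in the corollary.

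There is no real obstacle here: the content is entirely that the short exact sequence of modules becomes a cofiber triangle in a stable $\infty$-category and that Margolis homology is representable by $\sthom_{\Lambda(p_k)}(\un, -)$. The only point to be careful about is bookkeeping of the internal degree: the connecting map in the Ext long exact sequence corresponds, under the periodicity isomorphism of the lemma, to multiplication by the periodicity generator in bidegree $(1, |p_k|)$, which accounts for the shift $H^{*-|p_k|}(Z, p_k)$ in the statement.
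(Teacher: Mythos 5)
Your proof is correct and follows the route the paper intends: the corollary is meant to be an immediate consequence of the preceding lemma identifying $H^*(-,p_k)$ with $\stext^{*}_{\Lambda(p_k)}(\un,-)$, together with the fact that a short exact sequence of modules becomes a cofiber sequence in the stable category and hence yields a long exact sequence of stable Ext groups; your degree bookkeeping for the connecting map (the $(1,|p_k|)$ periodicity shift) is also the right explanation of the $|p_k|$-shift on the $Z$-term.
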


\begin{pro} \label{lemma:margolisdetect}
Let $X$ be a module over a Margolis Hopf algebra $A$. Suppose moreover that $X$ is either bounded below or bounded above. Then $X$ is free if and only if, for all $1 \leq k \leq N$, 
$$H(X;p_k)=0.$$
\end{pro}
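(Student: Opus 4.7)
Plan. The ``only if'' direction is immediate from Milnor--Moore freeness: a free $A$-module $X \cong \bigoplus_\alpha \Sigma^{n_\alpha} A$ remains free upon restriction to any sub-Hopf-algebra $\Lambda(p_k)\subseteq A$, and a free $\Lambda(p_k)$-module has vanishing $p_k$-Margolis homology by the preceding lemma identifying $H(-;p_k)$ with $\stext^{*}_{\Lambda(p_k)}(\un,-)$.

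For the converse, my plan is to reduce to the case where the ambient Hopf algebra is itself exterior, via Palmieri's detection theorem. That result, recalled above, says that $X$ is free over $A$ iff $U_E(X)$ is free over $E$ for every quasi-elementary sub-Hopf-algebra $E\subseteq A$. Since $A$ is Margolis, each such $E$ is exterior, of the form $E=\Lambda(p_{k_1},\ldots,p_{k_s})$ on a subset of the Margolis operations, ordered by increasing degree. Because Margolis homology depends only on the underlying $\Lambda(p)$-module structure, the hypothesis $H(X;p_k)=0$ passes to $H(U_E(X);p_{k_j})=0$. It therefore suffices to prove the following claim: for any exterior Hopf algebra $E=\Lambda(q_1,\ldots,q_s)$ with $|q_1|<\cdots<|q_s|$, and any bounded below or bounded above $E$-module $M$ with $H(M;q_j)=0$ for $j=1,\ldots,s$, the module $M$ is $E$-free.

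I would prove the claim by induction on $s$. The base case $s=1$ is classical: any bounded (below or above) $\Lambda(q)$-module admits a Krull--Schmidt decomposition into copies of $\un$ and $\Lambda(q)$, and $H(-;q)$ counts precisely the trivial summands. For the inductive step, write $E=\Lambda(q_1)\otimes E'$ with $E'=\Lambda(q_2,\ldots,q_s)$. The base case (applied to $q_1$) shows $M$ is $\Lambda(q_1)$-free, so there is a short exact sequence of $E$-modules
\[
0 \to q_1 M \to M \to M/q_1 M \to 0
\]
in which the outer terms carry trivial $q_1$-action, hence are $E'$-modules, and multiplication by $q_1$ furnishes an $E'$-linear isomorphism $q_1 M \cong \Sigma^{|q_1|}(M/q_1 M)$. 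For each $j\geq 2$, the long exact sequence in $q_j$-Margolis homology associated to this short exact sequence, combined with $H(M;q_j)=0$, forces the connecting map to be an isomorphism $\delta \colon H^n(M/q_1 M;q_j) \xrightarrow{\cong} H^{n+|q_j|-|q_1|}(M/q_1 M;q_j)$. Since $|q_j|>|q_1|$ and $M/q_1M$ inherits the boundedness of $M$, iterating $\delta$ (if $M$ is bounded above) or $\delta^{-1}$ (if $M$ is bounded below) shifts the degree arbitrarily far past the bound and forces $H(M/q_1M;q_j)=0$. The inductive hypothesis then yields $E'$-freeness of $M/q_1M$, and one finishes by lifting an $E'$-basis of $M/q_1M$ to $M$: the lift spans $M$ over $E$ and is $E$-linearly independent, both by direct manipulations using the $\Lambda(q_1)$-free structure.

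The main obstacle I anticipate is the algebraic heart of the inductive step, namely the identification of the connecting homomorphism as a degree-shifting self-equivalence of $H(M/q_1 M;q_j)$, as this is precisely where the ordering $|q_1|<|q_j|$ and the boundedness hypothesis are both used crucially. Once this vanishing is in hand, both the application of the inductive hypothesis and the final basis-lifting are routine.
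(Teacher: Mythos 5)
Your proof takes the same route as the paper: reduce via Palmieri's detection theorem to the elementary (hence exterior) sub-Hopf-algebras, then use the classical fact that a bounded-below or bounded-above module over an exterior algebra on generators of increasing degree with vanishing Margolis homologies is free. The only difference is that where the paper simply cites Adams--Margolis for this second step, you supply its standard inductive proof, and your argument (the connecting map becoming a degree-raising self-isomorphism of $H(M/q_1M;q_j)$, killed by boundedness, followed by lifting an $E'$-basis) is correct.
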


\begin{proof}
We already know by \cite[Theorem 1.3]{Pal97} that $X$ is free if and only if $X$ is free over all the quasi-elementary sub-Hopf algebras of $A$. The fact that this is in turn equivalent to the triviality of all the Margolis cohomology groups is a classical result, see for example \cite[Theorem 2.1]{Ad71}.
\end{proof}

\begin{lemma} \label{lemma:kunneth}
Let $k \in [1,N]$. Margolis homology with respect to $p_k$ satisfies a K\"unneth formula, that is for all $X,Y \in \st(A)^{nc}$, there is an isomorphism
$$ H^*(X, p_k) \otimes H^*(Y, p_k) \cong  H^*(X \otimes Y, p_k).$$
\end{lemma}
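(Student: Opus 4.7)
The plan is to reduce the statement to the classical Künneth theorem for differential $\F$-vector spaces. The crucial input is the Leibniz rule
$$p_k(x \otimes y) = p_k(x) \otimes y + x \otimes p_k(y)$$
on the $A$-module tensor product. This follows from $p_k$ being primitive in $A$: since $\Lambda(p_k)$ is a sub-Hopf-algebra and $p_k$ lies in its augmentation ideal, the reduced coproduct $\bar{\Delta}(p_k)$ is forced to lie in $\F\cdot(p_k\otimes p_k)$, and in the standard cases (in particular for every sub-Hopf-algebra of the Steenrod algebra, where the $p_k$ are Milnor primitives) one has $\bar{\Delta}(p_k)=0$.

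Once the Leibniz rule is available, $(X,p_k)$ and $(Y,p_k)$ are $\Z/2$-graded chain complexes of $\F$-vector spaces (retaining the internal grading from the module structure), and $(X\otimes Y,p_k)$ is precisely their tensor product of complexes. Over the field $\F$, the classical Künneth formula gives
$$H(X\otimes Y,p_k)\cong H(X,p_k)\otimes H(Y,p_k),$$
since all Tor terms vanish; the isomorphism is realized concretely by the assignment $[x]\otimes[y]\mapsto [x\otimes y]$, which is well-defined on cycles and passes to homology directly from the Leibniz rule.

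The main subtle point is the verification of primitivity. If instead $\bar{\Delta}(p_k)=p_k\otimes p_k$, the Leibniz rule picks up an extra term $p_k(x)\otimes p_k(y)$; the Künneth map $[x]\otimes[y]\mapsto[x\otimes y]$ nevertheless remains well-defined on homology (cycles tensor cycles still land in the kernel of $p_k$, and the class of $p_k(x')\otimes y$ for $y$ a cycle still vanishes modulo images). The fact that it is an isomorphism can be checked by decomposing $X$ and $Y$ into trivial and free summands, which exhausts the indecomposable $\Lambda(p_k)$-modules, and verifying the iso on each product of summands --- free tensor anything over $\F$ remains free hence has vanishing Margolis homology, while trivial tensor trivial is trivial. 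Either route (appealing to the standard Künneth theorem under primitivity, or the short case analysis in general) concludes the proof.
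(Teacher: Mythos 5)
Your reduction to the classical K\"unneth theorem for complexes over $\F$ is exactly right \emph{when $p_k$ is primitive in $A$}, and in that case your argument is complete and matches what the paper (which simply defers to ``the classical case'') has in mind. The gap is in your verification of primitivity, and it is not removable in the stated generality. First, the failure mode you guard against is the wrong one: $\bar{\Delta}(p_k)$ lives in internal degree $|p_k|$ of $\bar{A}\otimes\bar{A}$, so the term $p_k\otimes p_k$ (degree $2|p_k|$) can never occur; what can occur are cross terms $a'\otimes a''$ with $a',a''$ of positive complementary degrees, neither equal to $p_k$. Second, your justification is circular: $\Lambda(p_k)$ is closed under the coproduct of $A$ only if $p_k$ is primitive, which is what is to be proved --- in general only a larger elementary subalgebra containing $p_k$ is a sub-Hopf-algebra of $A$. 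And primitivity genuinely fails for Margolis operations beyond the $Q_i$: in $\A(2)$ the Margolis operation $P_2^1$ (the Milnor basis element dual to $\xi_2^2$) has $\bar{\Delta}(P_2^1)=Q_1\otimes Q_1$, so that
$$P_2^1(x\otimes y)=P_2^1x\otimes y+Q_1x\otimes Q_1y+x\otimes P_2^1y,$$
and $(X\otimes Y,P_2^1)$ is \emph{not} the tensor product of the complexes $(X,P_2^1)$ and $(Y,P_2^1)$. For the same reason your fallback case analysis breaks down: the restriction of the $A$-module $X\otimes Y$ to $\Lambda(p_k)$ is not the tensor product of the restrictions of $X$ and $Y$, so decomposing each factor separately into free and trivial $\Lambda(p_k)$-summands says nothing about the $p_k$-action on $X\otimes Y$.

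The standard repair (Adams--Margolis, and \cite{Mar83}) is to filter $X\otimes Y$ so that the cross terms strictly lower filtration: the differential on the associated graded is then the Leibniz one, the classical K\"unneth theorem identifies the first page with $H^*(X,p_k)\otimes H^*(Y,p_k)$, and the resulting spectral sequence collapses. Alternatively, one first proves the formula when one factor is free (both sides vanish) and bootstraps. Either supply such a filtration argument, or restrict the claim to primitive $p_k$ --- which covers $\E(n)$ and $\A(1)$, but not $\A(2)$ or general Margolis Hopf algebras.
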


\begin{proof}
The proof is the same as in the classical case for the K\"unneth formula for chain complexes.
\end{proof}

\begin{de} \label{de:local}
We say that the Margolis homology of an $A$-module $M$ is concentrated on $S \subset [1,N]$ if 
$$H^*(M,p_i) = 0$$
whenever $i \not\in S$.

A module is $p_i$-acyclic if its Margolis homology is concentrated over $[1,N] - \{ p_i\}$.
\end{de}

\section{Tensor triangulated geometry} \label{sec:ttg}

\subsection{Recollections on tensor triangulated geometry}

Let $\C$ be an essentially small stable  $\infty$-$\otimes$-triangulated category. We suppose moreover that every object of $\C$ is strongly dualizable. \\
As we have seen before, an example of such a category is $\stinf(A)$, the stable module category of a finite dimensional connected Hopf algebra $A$.

We will recall here the basic definitions of tensor triangulated geometry. The purpose of this subsection is to fix the notations. The definitions and results can be found in \cite{Bal04,Bal10}.

Although we will take advantage of the $\infty$-category structure of $\C$ later on, the definition of the spectrum of prime ideals in $\C$ we use here coincides with the spectrum of prime ideals of $ho(\C)$ (with the same definition as the original one introduced by Balmer). In particular, the spectrum itself depends only on the homotopy category of $\C$ as a tensor triangulated category.

The $\infty$-categorical enhancement we can afford here is a version of the localization at $U$, for $U$ an open of $\spc(\C)$. For tensor triangular categories this was done by a Verdier quotient, whereas in our context, we can use the cofiber in $\infty$-categories. The reader who is already familiar with tensor triangulated geometry can safely skip to subsection \ref{sub:infiniverdier}.

\begin{de}
Let $\B \subset \C$ be a full subcategory of $\C$ containing $0$.
\begin{itemize}
\item The category $\B \subset \C$ is thick if it is stable by extensions and retracts.
\item The category $\B$ is an ideal of $\C$ if for all $b \in \B$ and $c \in C$, $b \otimes c \in \B$.
\item A proper ideal $\B$ of $\C$ is a prime ideal if $c \otimes c' \in \B$ implies that either $c$ or $c'$ belongs to $\B$.
\end{itemize}
\end{de}

\begin{de}
The spectrum of prime ideals of $\C$ is the set $\spc(\C)$ of prime ideals, together with the Zariski topology generated by the open sets
$$ U(c) = \{ \P \in \spc(\C) | a \in \P \},$$
where $c$ is an object of $\C$.
\end{de}

The main property of $\spc(\C)$ is that it is exactly the object needed to construct a support theory of $c \in \C$: it is the universal support datum (see \cite{Bal04}).

\begin{de}
Let $c \in \C$.
The closed set $\supp(c) = U(c)^c$ is called the support of the object $c \in \C$. An object $c \in \C$ is supported in $Z$ if $\supp(c) \subset Z$. We denote the full subcategory of $\C$ whose objects are supported on $Z$ by $\C_{Z}$.
\end{de}

\begin{lemma}[\emph{\cite[Proposition 2.6]{Bal04}}] \label{lemma:support}
The support satisfies the following properties \begin{itemize}
\item $\supp(c) = \emptyset$ if and only if $c \eq 0$,
\item $\supp(\un) = \spc(\C)$,
\item $\supp(c \oplus c') = \supp(c) \cup \supp(c')$,
\item $\supp(c \otimes c') = \supp(c) \cap \supp(c')$,
\item $\supp(b) \subset \supp(a) \cup \supp(c)$ whenever $b$ is an extension of $a$ and $c$.
\end{itemize}
\end{lemma}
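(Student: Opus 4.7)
The plan is to unfold the definition $\supp(c) = \{\P \in \spc(\C) \mid c \notin \P\}$ for each of the five bullets and derive each identity directly from the defining closure properties of a prime $\otimes$-ideal: thickness (closure under direct sums, summands, and extensions), the ideal condition ($c \in \P, c' \in \C \Rightarrow c \otimes c' \in \P$), and primeness ($c \otimes c' \in \P$ implies $c \in \P$ or $c' \in \P$).

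Four of the five items are formal and I would dispatch them first. For $\supp(\un) = \spc(\C)$: if $\un \in \P$ then $c = \un \otimes c \in \P$ for every $c$, contradicting properness. For $\supp(c \oplus c')$: thickness gives $c \oplus c' \in \P$ if and only if both $c \in \P$ and $c' \in \P$; taking complements in $\spc(\C)$ converts the conjunction into a union. For $\supp(c \otimes c')$: the primeness condition reads $c \otimes c' \in \P$ iff $c \in \P$ or $c' \in \P$, whose complement is the asserted intersection. For the extension clause: if $b$ fits in a distinguished triangle with $a$ and $c$, and $a, c \in \P$, then $b \in \P$ by thickness; contraposition yields the inclusion $\supp(b) \subset \supp(a) \cup \supp(c)$.

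The one genuinely non-trivial item is the forward implication of the first bullet: if $c \not\eq 0$, then some prime $\P$ must omit $c$. The converse is immediate since $0$ lies in every proper ideal. The plan for the forward direction is the standard tensor-triangular transcription of the commutative-algebra argument separating an element from a multiplicatively closed subset. I would consider the poset of thick $\otimes$-ideals $\I \subsetneq \C$ satisfying $c^{\otimes n} \notin \I$ for every $n \geq 0$; this poset is non-empty (it contains $\{0\}$, as $c \not\eq 0$) and closed under unions of chains, so Zorn's lemma produces a maximal element $\P$. One then verifies that $\P$ is prime by the usual contrapositive argument: if $a \otimes b \in \P$ but $a, b \notin \P$, then the thick $\otimes$-ideals $\langle \P, a \rangle$ and $\langle \P, b \rangle$ strictly contain $\P$, hence by maximality contain some $c^{\otimes n}$ and $c^{\otimes m}$ respectively, forcing $c^{\otimes(n+m)} = c^{\otimes n} \otimes c^{\otimes m} \in \P$ and contradicting the defining avoidance property.

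The main obstacle is precisely this primeness verification; everything else reduces to routine bookkeeping with the closure properties of ideals. Since the statement is attributed to \cite[Proposition 2.6]{Bal04}, I would also cite that reference rather than reproducing the Zorn argument in full.
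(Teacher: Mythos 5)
The paper offers no proof of this lemma; it simply attributes the statement to \cite[Proposition 2.6]{Bal04}, so there is no in-text argument to compare against. Your reconstruction follows Balmer's original proof, and your handling of the last four items is correct, including the observation that primeness plus the ideal property gives the equality (not just an inclusion) in the tensor item.

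The one concrete gap is in the forward implication of the first item. You take the poset of thick $\otimes$-ideals $\I$ avoiding all tensor powers $c^{\otimes n}$ and assert that it is non-empty because it contains $\{0\}$, justified by ``$c \not\cong 0$.'' But $c \not\cong 0$ only rules out $c^{\otimes 1}\cong 0$; for $\{0\}$ to lie in your poset you need $c^{\otimes n}\not\cong 0$ for \emph{every} $n\geq 0$, i.e.\ that $c$ is not $\otimes$-nilpotent. In a general tensor-triangulated category this can genuinely fail: a nonzero $c$ can satisfy $c^{\otimes 2}\cong 0$, and for such $c$ every prime ideal contains $c$ (apply primeness to $0\cong c\otimes c\in\P$), so $\supp(c)=\emptyset$ even though $c\not\cong 0$. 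This is exactly why Balmer's unconditional statement reads ``$\supp(c)=\emptyset$ if and only if $c$ is $\otimes$-nilpotent.'' What rescues the first bullet as stated here is the standing hypothesis at the opening of Section \ref{sec:ttg} that every object of $\C$ is strongly dualizable: rigidity exhibits $c$ as a retract of $c\otimes Dc\otimes c$ (with $Dc$ the dual), and iterating shows $c$ is a retract of $c^{\otimes n}\otimes (Dc)^{\otimes(n-1)}$, which vanishes whenever $c^{\otimes n}\cong 0$. You should invoke this rigidity hypothesis explicitly at that step rather than inferring non-nilpotence from $c\not\cong 0$ alone.
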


\begin{pro}[\emph{\cite[Proposition 11]{Bal10}}]
The space $\spc(\C)$ is quasi-compact, and the $U(c)$ forms an open basis of quasi-compact opens. Moreover, any irreducible open has a unique generic point.
\end{pro}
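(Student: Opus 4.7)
The plan is to verify each of the three claims in turn.

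\textbf{Quasi-compactness and basis of quasi-compact opens.} The family $\{U(c)\}$ is a basis by definition and is stable under finite intersection via $U(c) \cap U(c') = U(c \oplus c')$ (using closure of a thick $\otimes$-ideal under direct sums). Since $0$ lies in every thick $\otimes$-ideal, $\spc(\C) = U(0)$, so quasi-compactness of the whole space is the special case $c=0$ of quasi-compactness of $U(c)$, which I prove as follows. Suppose $U(c) \subset \bigcup_{i \in I} U(c_i)$, and let $S$ denote the set of finite tensor products $c_{i_1} \otimes \cdots \otimes c_{i_k}$. Either $\langle c\rangle \cap S \neq \emptyset$, in which case some such tensor product lies in $\langle c\rangle$, and then any prime $\P \ni c$ contains $\langle c\rangle$, hence contains $c_{i_1} \otimes \cdots \otimes c_{i_k}$, and primality gives some $c_{i_j} \in \P$ --- yielding the finite subcover $U(c) \subset U(c_{i_1}) \cup \cdots \cup U(c_{i_k})$. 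Or $\langle c\rangle \cap S = \emptyset$, in which case Zorn's lemma extracts a thick $\otimes$-ideal $\P \supset \langle c\rangle$ maximal among those disjoint from $S$. A standard bilinearity argument based on $(\P + \langle b\rangle) \otimes (\P + \langle b'\rangle) \subset \P + \langle b \otimes b'\rangle$ shows $\P$ is prime, and $\P \in U(c) \setminus \bigcup_i U(c_i)$ contradicts the cover hypothesis. So the first case always obtains.

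\textbf{Sobriety.} I interpret the last assertion as the sobriety of $\spc(\C)$: every irreducible closed subset $Y$ has a unique generic point. Define $\P_Y := \bigcup_{\mathcal{Q} \in Y} \mathcal{Q}$. Closure under $\otimes$ with any object and under retracts is immediate from the corresponding properties of each $\mathcal{Q}$; closure under direct sums and extensions uses irreducibility of $Y$ together with the tensor/sum/extension formulas of Lemma \ref{lemma:support}: if $a, b \in \P_Y$, then $Y \cap \supp(a)$ and $Y \cap \supp(b)$ are proper closed subsets of $Y$, so by irreducibility their union $Y \cap \supp(a \oplus b)$ is also proper, forcing some $\mathcal{Q} \in Y$ to contain $a \oplus b$; the extension argument is similar, using $\supp(b) \subset \supp(a) \cup \supp(c)$. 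Primality of $\P_Y$ follows directly from primality of each $\mathcal{Q} \in Y$: if $a \otimes b \in \P_Y$, then some $\mathcal{Q} \in Y$ contains $a \otimes b$, and primality of $\mathcal{Q}$ puts $a$ or $b$ in $\P_Y$. Finally $\P_Y \in Y$ and $\mathcal{Q} \subset \P_Y$ for every $\mathcal{Q} \in Y$ by construction, whence Balmer's convention $\overline{\{\P\}} = \{\mathcal{Q} : \mathcal{Q} \subset \P\}$ gives $\overline{\{\P_Y\}} = Y$. Uniqueness is forced by the $T_0$ property (distinct primes are separated by some $U(c)$).

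The main obstacles are the primality check in Case 2 of the first part, which requires the tensor-product computation on sums of ideals, and the closure of $\P_Y$ under extensions and direct sums in the sobriety argument, where irreducibility of $Y$ is the essential input. Both reduce ultimately to repeated use of the formulas collected in Lemma \ref{lemma:support}.
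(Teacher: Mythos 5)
Your proof is correct and follows the standard tensor-triangular argument of Balmer, which is what the paper cites without reproducing: quasi-compactness of each $U(c)$ via the multiplicative set $S$ of finite tensor products together with a Zorn's-lemma construction of a prime disjoint from $S$, and sobriety via the union $\P_Y = \bigcup_{\mathcal{Q}\in Y}\mathcal{Q}$, where irreducibility of $Y$ is exactly what makes the union thick (closure under extensions/sums coming from Lemma \ref{lemma:support}). You are also right to read the paper's phrase ``irreducible open'' as the usual sobriety statement for irreducible closed subsets; that is what \cite[Proposition 11]{Bal10} asserts.
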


\subsection{$\infty$-categorical quotients} \label{sub:infiniverdier}

Recall from \cite[Definition 5.4]{BGT} That there is a good $\infty$-categorical model for the Verdier quotients 
\begin{equation*}
\st(A)(U) = \st(A)/\st(A)_{U^c},
\end{equation*}
namely, the cofiber of the functor
$$ \stinf(A)_{U^c} \rightarrow \stinf(A).$$

\begin{pro} \label{pro:inftyverdier}
Let $\stinf(A)(U)$ be the cofiber of the (fully faithful) $\infty$-functor $\stinf(A)_{U^c} \rightarrow \stinf(A)$. This is an $\infty$-category whose associated homotopy category is precisely $\st(A)/\st(A)_{U^c}$. This yields a functor of stable-$\infty\dashmod\otimes$-categories 
$$\stinf(A) \rightarrow \stinf(A)(U).$$
\end{pro}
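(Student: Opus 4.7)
The plan is to verify the three assertions in the proposition separately: (i) that the inclusion $\stinf(A)_{U^c} \rightarrow \stinf(A)$ is fully faithful as claimed, (ii) that the homotopy category of the $\infty$-categorical cofiber is the usual Verdier quotient $\st(A)/\st(A)_{U^c}$, and (iii) that the construction naturally lifts to the symmetric monoidal setting of stable $\infty$-$\otimes$-categories.

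For (i), the category $\stinf(A)_{U^c}$ is defined as the full sub-$\infty$-category of $\stinf(A)$ spanned by those objects with support contained in $U^c$. A full subcategory inclusion at the $\infty$-categorical level is automatically fully faithful (it is an equivalence on mapping spectra), so the cofiber is well-defined in the $\infty$-category of small stable $\infty$-categories and exact functors. I would also note here that $\st(A)_{U^c}$ is a thick triangulated subcategory of $\st(A)$: Lemma \ref{lemma:support} guarantees that the support condition is preserved under extensions, direct summands and shifts, so the classical Verdier quotient $\st(A)/\st(A)_{U^c}$ is well-defined.

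For (ii), I would invoke the main identification established in \cite{BGT}, Definition 5.4 and the discussion surrounding it: the cofiber of a fully faithful exact functor in the $\infty$-category of small stable $\infty$-categories is an $\infty$-categorical model for the Verdier quotient, in the precise sense that passing to homotopy categories recovers the classical construction. Applying this to our fully faithful inclusion produces an $\infty$-category whose homotopy category is $\st(A)/\st(A)_{U^c} = \st(A)(U)$.

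For (iii), the crucial input is that $\stinf(A)_{U^c}$ is a $\otimes$-ideal in $\stinf(A)$: indeed, for $a$ supported in $U^c$ and any $b$, Lemma \ref{lemma:support} gives $\supp(a \otimes b) = \supp(a) \cap \supp(b) \subset U^c$, so $a \otimes b$ lies again in $\stinf(A)_{U^c}$. The general theory of symmetric monoidal localization (Lurie, \cite{HAII}, Section 2.2) then equips the cofiber $\stinf(A)(U)$ with a unique symmetric monoidal structure for which the quotient functor is symmetric monoidal and exact, and stability is preserved since we took the cofiber in stable $\infty$-categories.

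The main obstacle is really (ii): the content of the statement is the compatibility between the $\infty$-categorical cofiber and the classical Verdier construction, which is not formal and requires the result of Blumberg-Gepner-Tabuada. The monoidal enhancement in (iii) looks delicate but becomes formal once (ii) is secured, because the $\otimes$-ideal property reduces everything to a standard application of $\infty$-categorical symmetric monoidal localization.
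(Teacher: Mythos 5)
Your proposal is correct and follows essentially the same route as the paper: both invoke the Blumberg--Gepner--Tabuada identification of the cofiber of a fully faithful exact functor with the Verdier quotient, and both derive the monoidal structure from the $\otimes$-ideal property supplied by the support formula $\supp(c \otimes c') = \supp(c) \cap \supp(c')$ of Lemma \ref{lemma:support}. The paper's proof is terser (it simply lists BGT Definition 5.4, Propositions 5.6 and 5.9, and points at the support formula), while you unpack the same ingredients more explicitly, including the appeal to symmetric monoidal localization for step (iii).
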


\begin{proof}
This is a consequence of Definition 5.4, Proposition 5.6, Proposition 5.9 of \cite{BGT}, except for the last assertion. The monoidal behaviour of the quotient and the monoidal symetric structure on $\stinf(A)(U)$ comes from the formula $$\supp(c \otimes c') = \supp(c) \cap \supp(c'),$$ given in Lemma \ref{lemma:support}.
\end{proof}

\begin{de}
Let $U$ be an open subset of $\spc(\C)$. Let $\C_Z$ be the full sub-$\infty$-category of $\C$ whose objects are $Z = U^c$ supported objects.  We define $\C(U) := (\C/\C_Z)^{\sharp}$, the idempotent completion of the quotient $\C/\C_Z$.
This is called the $U$-local category.
\end{de}

The following proposition explains in what sense the local $\infty$-category we described here is the correct $\infty$-categorical version of \cite{Bal04}.

\begin{pro}
Let $U \subset \spc(\C)$ be an open. The $U$-local category $\C(U)$ satisfies the two following properties: \begin{enumerate}
\item its homotopy category $\ho(\C(U))$ is canonically equivalent to the classical one $\ho(\C)(U)$, as defined by Balmer,
\item there is an exact sequence of stable $\infty$-categories
$$ \C(U^c) \rightarrow \C \rightarrow \C(U).$$
\end{enumerate}
\end{pro}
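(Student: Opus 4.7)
The plan is to deduce both statements directly from Proposition \ref{pro:inftyverdier}, whose proof carries over verbatim from $\stinf(A)$ to an arbitrary essentially small stable $\infty$-$\otimes$-category $\C$ of dualizable objects, together with standard compatibilities between idempotent completion and passage to the homotopy category recorded in \cite{BGT}.

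For (1), I would first apply (the generalization of) Proposition \ref{pro:inftyverdier} to identify $\ho(\C/\C_Z)$ with the classical Verdier quotient $\ho(\C)/\ho(\C_Z)$ as triangulated categories; the argument of that proposition only uses that the $\infty$-cofiber of a fully faithful functor of small stable $\infty$-categories models the Verdier quotient on homotopy categories, which is \cite[Proposition 5.6]{BGT}. Next I would invoke the general fact that forming the homotopy category commutes with idempotent completion, that is $\ho(\D^{\sharp}) \simeq \ho(\D)^{\sharp}$ for any small stable $\infty$-category $\D$. Concatenating gives
$$ \ho(\C(U)) \;=\; \ho\bigl((\C/\C_Z)^{\sharp}\bigr) \;\simeq\; \bigl(\ho(\C)/\ho(\C_Z)\bigr)^{\sharp}, $$
and the right-hand side is exactly Balmer's $\ho(\C)(U)$ from \cite{Bal04}.

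For (2), the sequence $\C_Z \to \C \to \C/\C_Z$ is a cofiber sequence of small stable $\infty$-categories by the very definition of the middle term. By Proposition 5.9 of \cite{BGT}, termwise idempotent completion preserves such exact sequences in the $\infty$-category of idempotent complete small stable $\infty$-categories; since $\C$ is assumed idempotent complete and any thick sub-$\infty$-category of an idempotent complete stable $\infty$-category is again idempotent complete, only the last term is modified and one obtains the desired $\C_Z \to \C \to (\C/\C_Z)^{\sharp} = \C(U)$. One then identifies $\C_Z$ with $\C(U^c)$ under the convention that, for a closed subset $Z$, the symbol $\C(Z)$ denotes the full sub-$\infty$-category of $Z$-supported objects. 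The main obstacle is purely conceptual rather than computational: the whole argument hinges on checking that Balmer's localization matches the idempotent-completed $\infty$-cofiber on homotopy categories, and that the cofiber sequence survives idempotent completion without perturbing $\C_Z$. Once those two compatibilities are granted, both items reduce to straightforward invocations of \cite{BGT}.
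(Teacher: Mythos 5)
Your proposal is correct and follows exactly the route the paper intends: the proposition is in fact stated without proof, but the paper's justification of the adjacent Proposition \ref{pro:inftyverdier} cites the very same results of \cite{BGT} (Definition 5.4, Propositions 5.6 and 5.9), and the two compatibilities you supply --- that $\ho(\D^{\sharp}) \simeq \ho(\D)^{\sharp}$ and that the thick subcategory $\C_Z$ is unchanged by idempotent completion --- are the standard glue making the comparison with Balmer's $\ho(\C)(U)$ precise. Your reading of $\C(U^c)$ as the full sub-$\infty$-category $\C_{U^c}$ of $U^c$-supported objects is also the intended one.
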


\subsection{Classical results about gluing}

In this subsection, we recall the gluing result obtained by Balmer and Favi in \cite{Bal07}.

\begin{pro}[\emph{\cite[Corollary 5.8, 5.10]{Bal07}}] \label{thm:mvglue}
Let
 $$\xymatrix{ U_1 \cap U_2 \ar[r] \ar[d] & U_1 \ar[d]\\
U_2 \ar[r] & \spc(\C)}$$
be a cover of $\spc(\C)$ by two quasi-compact open subsets.
\begin{itemize}
\item Let $a,b$ be two objects of $\C$, then there is a long exact sequence
$$ \hdots \hom_{\C(U_1 \cap U_2)}(a,\Omega b) \stackrel{\partial}{\rightarrow} \hom_{\C}(a,b) \rightarrow \hom_{\C(U_1)}(a,b) \oplus \hom_{\C(U_2)}(a,b) $$ $$ \rightarrow \hom_{\C(U_1 \cap U_2)}(a,b) \stackrel{\partial}{\rightarrow} \hdots$$
\item Let $a_1 \in \C(U_1)$ and $a_2 \in \C(U_2)$ and an isomorphism $\sigma : a_1 \eq a_2$ in $\hom_{\C(U_1 \cap U_2)}(a_1,a_2)$. Then, there is an object $a \in \C$, unique up to isomorphism, which restricts to $a_i$ in $\C(U_i)$, for $i = 1,2$.
\end{itemize}
\end{pro}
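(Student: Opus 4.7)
The plan follows Balmer and Favi via Rickard idempotents. Write $Z_i := U_i^c$; the hypothesis gives $Z_1 \cap Z_2 = \emptyset$, and both $Z_i$ are Thomason closed since their complements are quasi-compact. Inside a compactly generated ambient stable $\otimes$-$\infty$-category in which $\C$ sits as the compact objects, I would construct for each Thomason closed $Z$ the Rickard idempotent triangle
$$e_Z(\un) \to \un \to f_Z(\un) \to \Omega^{-1}e_Z(\un),$$
with $e_Z(\un)$ supported on $Z$ and $f_Z(\un)$ being ``$Z$-local''. Tensoring with $f_{Z_i}(\un)$ models the localization $\C \to \C(U_i)$, so that $\hom_{\C(U_i)}(a,b) \cong \hom_\C(a, f_{Z_i}(\un) \otimes b)$; composing gives $f_{Z_1}(\un) \otimes f_{Z_2}(\un) \simeq f_{Z_1 \cup Z_2}(\un)$ as a model for the localization at $U_1 \cap U_2$.

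The crucial input is $e_{Z_1}(\un) \otimes e_{Z_2}(\un) \simeq 0$, forced by Lemma \ref{lemma:support} since $\supp$ of a tensor is the intersection of supports, and here $Z_1 \cap Z_2 = \emptyset$. Combining the two idempotent triangles via the octahedral axiom then produces the Mayer-Vietoris triangle
$$\un \to f_{Z_1}(\un) \oplus f_{Z_2}(\un) \to f_{Z_1}(\un) \otimes f_{Z_2}(\un) \to \Omega^{-1}\un,$$
equivalently making the square with vertices $\un$, $f_{Z_i}(\un)$, $f_{Z_1}(\un) \otimes f_{Z_2}(\un)$ into a homotopy pullback.

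Assertion (1) then follows formally: tensor the MV triangle with $b$, apply $\hom_\C(a,-)$, and rewrite via the identifications above. For (2), given $(a_1, a_2, \sigma)$ I would lift the $a_i$ to the ambient category and form the homotopy pullback $a$ of $a_1, a_2$ over their common restriction (glued by $\sigma$) in $\C(U_1 \cap U_2)$; applying $f_{Z_i}(\un) \otimes -$ to this pullback and using $f_{Z_i}(\un) \otimes e_{Z_{3-i}}(\un) \simeq e_{Z_{3-i}}(\un)$ (from the key vanishing) shows $a|_{U_i} \simeq a_i$, and compactness of $a$ follows because the MV square exhibits it as a finite homotopy pullback of compacts along maps in $\C$. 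Uniqueness is an immediate consequence of (1) applied to $\hom_\C(a,a')$ for two candidate glueings, together with a standard two-out-of-three argument to promote the resulting map to an isomorphism. The main technical obstacle is the ambient-category setup and the compactness check on the glued object; these are precisely the ingredients worked out in \cite{Bal07}, and the plan above is essentially a readout of their construction in our $\infty$-categorical setting.
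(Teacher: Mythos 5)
The paper does not actually prove this proposition: it cites it directly from Balmer--Favi's 2007 paper on gluing (their Corollaries~5.8 and~5.10), where the argument is run \emph{entirely inside the compact category} $\C$ and its Verdier quotients, using the hypothesis that every object is strongly dualizable to avoid any ambient category. Your reconstruction instead passes to a compactly generated ambient category and invokes Rickard idempotents, which is closer in spirit to Balmer--Favi's later ``generalized tensor idempotents'' paper (\cite{BF11}). Both routes are legitimate, and the two you-would-need facts ($e_{Z_1}(\un)\otimes e_{Z_2}(\un)\simeq 0$ and the resulting Mayer--Vietoris triangle for $\un$) are correct; the trade-off is that the direct quotient argument of \cite{Bal07} never has to leave $\C$, so the glued object is automatically compact and idempotent-completion issues disappear, whereas your route has to come back down to compacts at the end --- which you flag yourself as the technical obstacle.

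Two points where your sketch is slightly loose. First, the vanishing $e_{Z_1}(\un)\otimes e_{Z_2}(\un)\simeq 0$ does not follow ``from Lemma~\ref{lemma:support}'' as stated: that lemma concerns supports of \emph{compact} objects in $\spc(\C)$, and $e_{Z}(\un)$ is not compact. You need the intermediate step that $e_{Z_i}(\un)$ lies in the localizing subcategory generated by compacts with support in $Z_i$, after which Lemma~\ref{lemma:support} applied to those compact generators (plus the fact that $\otimes$ commutes with colimits) gives the vanishing; this is exactly Balmer--Favi's Theorem on $e_{Y_1}\otimes e_{Y_2}\simeq e_{Y_1\cap Y_2}$. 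Second, the compactness of the glued object $a$ genuinely needs an argument in your setup; ``finite homotopy pullback of compacts'' is not immediate because $a_1,a_2$ are only known to be compact in the quotients, and $\C(U)$ is defined here as an idempotent completion, so its objects need not be represented by objects of $\C$. In the \cite{Bal07} proof this is handled head-on by producing $a$ via an octahedron in $\C$ itself, which is why the conclusion there lands in $\C$. Given that the proposition is a citation, your plan is a fair readout of the $\infty$-categorical/idempotent version of the result, provided you fill the two gaps above.
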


Note that, in the case we are interested in, this is saying something about the spectra of morphisms $\sthom_A(a,b)$, because of the identification provided by Proposition \ref{pro:pihomandext}.

\section{Limits and $\infty$-stacks} \label{sec:stack}

\subsection{Limit of homotopy theories}

We review in this section how to detect that an $\infty$-category is the homotopy limit of a diagram of $\infty$-categories. We will essentially use the results of \cite{B11,B12}.

Recall that a weak equivalence of $\infty$-categories is a Dwyer-Kan equivalence. In particular, a functor
$$F: \C \rightarrow \D$$
is an equivalence of $\infty$-categories if and only if \begin{enumerate}
\item The functor $\ho F$ between the associated homotopy categories is essentially surjective,
\item For each pair of objects $x,y \in \C$, the map
$$ F : \hom_{\C}(x,y) \rightarrow \hom_{\D}(Fx,Fy)$$
is a weak equivalence of simplicial sets.
\end{enumerate}

\begin{pro}[\emph{\cite{B12}}]
A homotopy limit of a diagram of complete Segal spaces is a level-wise homotopy limit.
\end{pro}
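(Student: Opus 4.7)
The plan is to work in Rezk's model structure on simplicial spaces $s\mathrm{Set}^{\Delta^{op}}$, for which the complete Segal spaces are precisely the fibrant objects. Recall that this model structure is obtained as a left Bousfield localization of the Reedy model structure, and that an object $X_\bullet$ is a complete Segal space exactly when it is Reedy fibrant, each Segal map $X_n \to X_1 \times^h_{X_0} \cdots \times^h_{X_0} X_1$ is a weak equivalence of spaces, and the completeness map $X_0 \to X_{heq}$ is a weak equivalence.

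First, I would observe that for any small diagram $\{X^{(i)}_\bullet\}_{i \in I}$ of Reedy fibrant simplicial spaces, the Reedy homotopy limit is computed level-wise: each evaluation functor $\mathrm{ev}_n$ is right Quillen with respect to the Reedy structure, so $(\holim_i X^{(i)})_n \simeq \holim_i X^{(i)}_n$ in $s\mathrm{Set}$. In particular, the level-wise homotopy limit of our diagram of complete Segal spaces is Reedy fibrant and computes the Reedy homotopy limit.

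Next, I would check that both the Segal and completeness conditions persist under this level-wise homotopy limit. The Segal map for $(\holim_i X^{(i)})_\bullet$ at level $n$ is itself the homotopy limit over $i$ of the individual Segal maps, since a finite homotopy pullback of spaces commutes with arbitrary homotopy limits of spaces. Because each map in this $I$-diagram is a weak equivalence by assumption, the induced map between homotopy limits is again a weak equivalence. The completeness condition is handled identically, using Rezk's description of the space of homotopy equivalences $X_{heq}$ as a homotopy limit built from $X_0$, $X_1$, and $X_3$, so it is preserved by the level-wise homotopy limit.

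Combining these observations, the level-wise homotopy limit is itself a complete Segal space. Since weak equivalences in the CSS model structure between CSS-fibrant objects coincide with Reedy weak equivalences, and a left Bousfield localization does not alter homotopy limits of fibrant diagrams, the level-wise construction also computes the homotopy limit in the CSS model structure. The main technical point is the commutation of homotopy limits with homotopy limits at each level, which reduces to standard Fubini-type statements for homotopy limits of spaces.
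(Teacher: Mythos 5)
The paper offers no argument for this proposition---it is quoted directly from \cite{B12}---so there is nothing to compare line by line; your proposal is a correct, self-contained proof along the same lines as the one underlying the citation. The two hinges are exactly where you place them: evaluation at each $[n]$ is right Quillen for the Reedy structure, so Reedy homotopy limits of simplicial spaces are computed level-wise; and the Segal and completeness conditions are locality conditions, asking that certain maps between finite homotopy limits of the levels be weak equivalences, which persist under a further homotopy limit because homotopy limits of spaces commute with one another and carry level-wise equivalences of diagrams to equivalences. One point you handle correctly but that deserves emphasis: the completeness condition must be tested against $\mathrm{Map}(E,X)$, the homotopy pullback built from $X_3$, $X_1$ and $X_0$ (the Rezk/Joyal--Tierney formulation), rather than against the description of $X_{\mathrm{heq}}$ as a union of path components of $X_1$, since selecting components does not visibly commute with homotopy limits; your appeal to the $X_3$ description is the right move. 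Your closing appeal to left Bousfield localization is also sound, but it is worth stating the reason precisely: the $S$-local fibrant objects are closed under homotopy limits computed in the unlocalized structure (because $\mathrm{Map}(f,-)$ converts such limits into homotopy limits of spaces), and an unlocalized homotopy limit of local objects that is itself local represents the localized homotopy limit. Read this way, your explicit Segal/completeness verification is exactly the required locality check, and the argument is complete.
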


In particular, we will use the following criterion to identify homotopy colimits of $\infty$-categories.

\begin{de} \label{de:descent}
Let $\mathcal{U}$ be a small category, and $D : \mathcal{U} \rightarrow \infty\dashmod cat$
a diagram of $\infty$-categories.
A descent datum for an object is: \begin{itemize}
\item an object $d_u \in D(U)$, for all $U \in \mathcal{U}$,
\item a weak equivalence $\alpha_{U,V} : f(d_U) \cong g(d_V)$ whenever there is a diagram
$$ U \rightarrow W \leftarrow V$$
in $\mathcal{U}$
\end{itemize}
such that the appropriate diagram commutes.
\end{de}

\begin{pro}
Let $\mathcal{U}$ be a small category, and $D : \mathcal{U} \rightarrow \infty\dashmod cat$
a diagram of $\infty$-categories. Let moreover $\C$ be an $\infty$-category mapping to the diagram $D$ (\textit{i.e.} there are compatible functors $F_U : \C \rightarrow D(U)$ for all $U \in \mathcal{U}$). Then $\C$ is the homotopy limit of $D$ if and only if the two following conditions are satisfied: \begin{itemize}
\item For all descent datum $(d_U, \alpha_{U,V} )$, there exists an object $c \in \C$ whose image through the functors $F_U$ give rise to  $(d_U, \alpha_{U,V} )$,
\item for all pair of objects $x,y \in \C$, the map
$$ \hom_{\C}(x,y) \rightarrow \holim_U  \hom_{\D(U)}(x,y)$$
is a weak equivalence of simplicial sets.
\end{itemize}
\end{pro}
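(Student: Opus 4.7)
The plan is to reduce the claim to the Dwyer--Kan criterion for an equivalence of $\infty$-categories recalled at the beginning of this subsection: a functor is an equivalence if and only if it is essentially surjective on homotopy categories and induces weak equivalences on all mapping spaces. The compatibility of the $F_U$ produces a canonical comparison functor
$$ \Phi : \C \rightarrow \holim_U D(U) $$
out of the universal property of the homotopy limit, and the two conditions are designed to be precisely essential surjectivity and fullness/faithfulness for $\Phi$.

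First I would model all the $\infty$-categories involved as complete Segal spaces, so that the previous proposition applies and the homotopy limit $\holim_U D(U)$ is computed level-wise. Under this model, two identifications are the crucial inputs: the objects of $\holim_U D(U)$ are compatible families $(d_U,\alpha_{U,V})$, i.e.\ descent data in the sense of Definition \ref{de:descent}, and the mapping space from $x = (x_U)$ to $y = (y_U)$ in $\holim_U D(U)$ is identified with $\holim_U \hom_{D(U)}(x_U,y_U)$. Granting these identifications, condition (1) says exactly that $\Phi$ hits every object of $\holim_U D(U)$ up to equivalence, and condition (2) says exactly that $\Phi$ is fully faithful on mapping spaces; the converse direction is then immediate because if $\C$ itself is the homotopy limit, $\Phi$ is an equivalence of $\infty$-categories and both identifications transfer the essential surjectivity and the mapping space equivalence back to $\C$.

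The main obstacle is the first identification: matching the $0$-simplices of the Segal space $\holim_U D(U)$ with descent data as stated in Definition \ref{de:descent}, which only records objects indexed by $U \in \mathcal{U}$ and equivalences indexed by spans $U \to W \leftarrow V$. The level-wise homotopy limit of complete Segal spaces a priori produces coherent systems indexed by the whole nerve of $\mathcal{U}$, so one must argue that the higher coherences are automatic (or can be rectified) from the span data plus the commutativity condition appearing at the end of Definition \ref{de:descent}. I would handle this by working with an explicit cofibrant replacement of the indexing diagram (so that the level-wise limit of Segal spaces reduces to the expected combinatorial data), and by using that weak equivalences of $\infty$-categories are created level-wise to upgrade the span-indexed descent datum to a full homotopy coherent diagram.

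The second bullet of the conclusion is then almost formal: once one has identified the mapping spaces of $\holim_U D(U)$ with $\holim_U \hom_{D(U)}(F_U x, F_U y)$, the condition that $\Phi$ be an equivalence on mapping spaces is literally condition (2). Putting the two halves together gives the stated biconditional via the Dwyer--Kan criterion.
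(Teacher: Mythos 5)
The paper states this proposition without proof, presenting it as an immediate consequence of the two results it has just recalled (the Dwyer--Kan criterion for equivalences of $\infty$-categories and Bergner's level-wise computation of homotopy limits of complete Segal spaces); your argument assembles exactly these ingredients into the comparison functor $\Phi : \C \rightarrow \holim_U D(U)$ and is the intended proof. The one delicate point you flag --- that the definition of a descent datum records only objects and span-indexed equivalences, whereas a point of the level-wise homotopy limit is a fully homotopy-coherent family over the nerve of $\mathcal{U}$ --- is a genuine looseness, but it is inherited from the paper's own informal definition (``such that the appropriate diagram commutes'') rather than introduced by your argument, and your proposed rectification via a cofibrant replacement of the indexing diagram is the standard way to close it.
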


\subsection{$\infty$-stacks, a.k.a. $(\infty,1)$-sheaves}

We now turn to the definition of $\infty$-stack. For us, it will simply be an appropriate version of a sheaf of $\infty$-categories. 

\begin{de} \label{de:stack}
Let $S$ be a site and 
$$F : S \rightarrow \infty\dashmod cat$$
be a presheaf.
The presheaf $F$ is a stack if for any cover $\{U_{\alpha} \rightarrow U \}$ of $U \in S$, the functor
$$ F(U) \rightarrow \holim_{c \in C(\{U_{\alpha} \rightarrow U \})} F(U_{\alpha})$$
is a homotopy equivalence, where $C(\{U_{\alpha} \rightarrow U \})$ is the Cech nerve associated to the cover $\{U_{\alpha} \rightarrow U \}$.
\end{de}

\begin{rk}
Note that we do not restrict to the case where the $\infty$-sheafs takes values in $\infty$-groupoids, as the definition of $\infty$-stack sometimes assumes.
\end{rk}

The hint that this sort of structure will arise when doing tensor triangulated geometry in stable $\infty\dashmod\otimes$-categories is given by the gluings obtained by Balmer-Favi in \cite{Bal07} (see Proposition \ref{thm:mvglue}).

\part{TTG for Margolis Hopf algebras}

\section{The spectrum of graded Hopf algebras} \label{sec:spectrumhopf}

\subsection{The spectrum of a general Hopf algebra}

We start with a determination of the spectrum of the stable category of graded Hopf algebras. We will freely use $\spc(A)$ as a shorthand for $\spc(\st(A))$.

\begin{de}
Let $R$ be a bigraded commutative unital ring. The space
$$\proj(R)$$
is the set of all bihomogenious prime ideals, except the irrelevant one, together with the usual Zariski topology.
\end{de}

\begin{pro} \label{pro:projadspec}
Let $A$ be a Hopf algebra. There is a homeomorphism
$$ \spc(A) \eq \proj(H^{*,*}(A))$$
which sends a prime ideal $\mathfrak{p} \subset H^{*,*}(A)$ to $\P = \{ M \in \st(A) | \mathfrak{p} \subset ker(H^{*,*}(A) \rightarrow \ext_A^{*,*}(M,M))$.
\end{pro}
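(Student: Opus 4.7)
The plan is to realize the asserted homeomorphism as an instance of Balmer's universal comparison map from \cite{Bal10}. Since the graded endomorphism ring of the unit in $\st(A)$ is $\ext^{*,*}_A(\un,\un) = H^{*,*}(A)$, that construction produces a continuous map
\[
\rho_A : \spc(A) \longrightarrow \proj(H^{*,*}(A)), \qquad \P \longmapsto \{f \in H^{*,*}(A) : \mathrm{cone}(f) \notin \P\}.
\]
The content of the proposition is then that $\rho_A$ is a bijection whose inverse is given by the explicit formula in the statement; continuity of the inverse will follow by matching the basic quasi-compact opens $U(c) \subset \spc(A)$ with principal opens of $\proj$ via Lemma \ref{lemma:support}.

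First I would check that the assignment $\mathfrak{p} \mapsto \P(\mathfrak{p}) := \{M : \mathfrak{p} \subset \ker(H^{*,*}(A) \to \ext^{*,*}_A(M,M))\}$ really lands in $\spc(A)$. Thickness follows from the long exact sequences in $\ext^{*,*}_A(-,-)$ attached to any triangle; the tensor-ideal condition follows from the identity $\mathrm{id}_{M \otimes N} = \mathrm{id}_M \otimes \mathrm{id}_N$ together with the tensor-naturality of the $H^{*,*}(A)$-action on identity morphisms; primality reduces to a tensor-faithfulness statement. For this last point I would invoke Palmieri's detection theorem to reduce to the quasi-elementary case, where the K\"unneth formula of Lemma \ref{lemma:kunneth} supplies the required faithfulness.

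Next I would verify that $\rho_A$ and $\P(-)$ are mutually inverse. The identity $\rho_A(\P(\mathfrak{p})) = \mathfrak{p}$ unfolds directly from the definitions, using the standard tt-geometric fact that a homogeneous element acts nilpotently on its own cone. The opposite identity $\P(\rho_A(\P)) = \P$ is the hard direction: it amounts to reconstructing a prime tensor-ideal of $\st(A)$ from the cohomological shadow $\rho_A(\P)$. I would handle it by combining the Noetherianity of $H^{*,*}(A)$ (Wilkerson, already cited in the text) with Palmieri's detection by quasi-elementary sub-Hopf-algebras, reducing once again to the exterior Hopf algebra case, where the spectrum can be computed by hand and matches $\proj$ directly.

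The main obstacle is this last bijectivity step: it is essentially a Benson--Carlson--Rickard-type classification of thick $\otimes$-ideals of $\st(A)$, and is the sole place where the deep structural input about $A$ enters. Once it is in hand, continuity of the inverse and the identification of the two topologies follow formally from Lemma \ref{lemma:support} by comparing the basic quasi-compact opens $U(c) = \{\P : c \notin \P\}$ with the principal opens $D(f) \subset \proj(H^{*,*}(A))$.
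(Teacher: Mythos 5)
Your proposal follows a genuinely different route at the technical level, although it ultimately hinges on the same deep input as the paper's proof: the classification of thick $\otimes$-ideals of $\st(A)$. The paper invokes this directly, via Balmer's Theorem~6.3 of~\cite{Bal04} for the ungraded situation and Hovey--Palmieri's Corollary~3.7 for the graded one; Balmer's support-data machinery then converts the classification into the desired homeomorphism with no further work. You instead propose to build Balmer's graded comparison map $\rho_A$ explicitly, exhibit the inverse map, and verify bijectivity and continuity directly. That packaging is reasonable, but the way you plan to close the bijectivity step contains a genuine gap.

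Specifically, you intend to supply the hard direction $\P(\rho_A(\P)) = \P$ by \emph{reducing to the exterior Hopf algebra case} and applying Lemma~\ref{lemma:kunneth}. Two problems. First, the proposition is stated for an arbitrary (cocommutative, finite-dimensional, graded connected) Hopf algebra, where the relevant detecting sub-Hopf-algebras are the quasi-elementary ones, which are \emph{not} assumed exterior; Lemma~\ref{lemma:kunneth} in the paper is specific to Margolis operations and is simply not available in this generality. Second, even granting the detection theorem, the passage from ``the spectrum is correct over each quasi-elementary $E$'' to ``$\spc(\st(A)) \cong \proj(H^{*,*}(A))$'' is precisely a stratification theorem whose proof occupies a substantial portion of the Hovey--Palmieri memoir; asserting that it ``can be computed by hand'' is not a proof. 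You correctly identify this bijectivity step as ``the sole place where deep structural input enters,'' but your plan for discharging it does not actually work as stated and would need to be replaced by the citation the paper uses (or an equally substantial argument). Once the classification is in hand, the remaining points you raise (that $\P(-)$ lands in $\spc(A)$, the inverse identities, the matching of opens via Lemma~\ref{lemma:support}) are fine and essentially formal.
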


\begin{proof}
By \cite[Theorem 6.3]{Bal04}, The spectrum of 
\begin{equation*}
\st(A^{\text{ungraded}}) \eq \proj(H^*(A^{\text{ungraded}})),
\end{equation*}
with an analogous homeomorphism to the graded case. Now, \cite[Corollary 3.7]{HP} gives a poset isomorphism between nonempty tensor-closed thick subcategories of finitely generated modules in $\st(A)$ and nonempty subsets of $Spec(H^*(A))$ (bihomogenious prime ideals) closed under specialization {\it i.e.} closed.
\end{proof}

\subsection{The particular case of Margolis-Hopf algebras}

Let now $A$ be a Margolis-Hopf algebra. We now study the consequences of this hypothesis on the spectrum of the associated stable category, and see how tensor triangulated geometry is related, yet richer, than the study of Margolis operations.

\begin{pro} \label{pro:finitespc}
For a Margolis Hopf algebra $A$, $\spc(A)$ is finite. In particular, any subset is quasi-compact.
\end{pro}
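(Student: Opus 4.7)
By Proposition \ref{pro:projadspec}, $\spc(A) \cong \proj(H^{*,*}(A))$, so it suffices to show that the bigraded ring $H^{*,*}(A)$ has only finitely many non-irrelevant bihomogeneous prime ideals. Since $\proj$ does not see nilpotents, one may pass throughout to the reduced quotient $R := H^{*,*}(A)_{\mathrm{red}}$.

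Combining Pal's isomorphism modulo nilpotents (recalled just before Definition \ref{de:margolisalg}) with the Margolis hypothesis, $R$ is $F$-isomorphic to $\lim_B H^{*,*}(B)$, where $B$ ranges over the quasi-elementary sub-Hopf-algebras of $A$. Since $A$ is Margolis, these are exactly the exterior algebras $\Lambda(S)$ indexed by subsets $S \subseteq \{p_1, \ldots, p_N\}$, a finite poset of cardinality $2^N$. Each has polynomial cohomology $\F[h_s : s \in S]$ with $h_s$ in bidegree $(1, |p_s|)$, so $R$ is controlled, up to nilpotents, by a finite diagram of finitely generated bigraded polynomial rings.

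The final step is to enumerate the bihomogeneous primes of the limit. A bihomogeneous prime of $R$ pulls back to a compatible family of bihomogeneous primes across this finite lattice of elementary sub-Hopf-algebras, and the distinctness of the internal degrees $|p_1| < \ldots < |p_N|$, together with the restriction maps sending each generator $h_k$ either to itself or to zero, severely constrains such families. I expect the allowed compatible families to be parametrized by finite combinatorial ``Margolis-support'' data attached to the lattice of subsets of $\{p_1, \ldots, p_N\}$. The toy case $A = \A(1)$ developed in Section \ref{sec:toy} illustrates the explicit calculation.

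The main obstacle is this last enumeration for higher-rank Margolis algebras. In general, non-monomial bihomogeneous elements arise in the polynomial rings governing the limit (for instance, in the Milnor-degree bigrading, $h_2^3$ and $h_1^2 h_3$ share bidegree $(3,9)$), and in principle each such element could produce extra bihomogeneous primes. Resolving this requires invoking additional relations in $H^{*,*}(A)$ (of Adem- or Massey-product type) which are not visible from the lattice of exterior sub-algebras alone; the finiteness of the resulting spectrum ultimately reflects the finite combinatorics of the Margolis lattice. Quasi-compactness of any subset then follows immediately in a finite space.
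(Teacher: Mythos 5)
Your reduction is the same as the paper's: pass to $\proj(H^{*,*}(A))$ via Proposition \ref{pro:projadspec}, use the F-isomorphism with $\lim_E H^{*,*}(E)$ (harmless, since $\proj$ is insensitive to nilpotents and to uniform F-isomorphisms), and identify the target, up to nilpotents, with a polynomial ring on generators $h_k$ in bidegree $(1,|p_k|)$. But you then stop: the phrases ``I expect the allowed compatible families to be parametrized by\dots'' and ``resolving this requires invoking additional relations\dots'' leave the decisive step -- the actual finiteness of the set of bihomogeneous primes -- unproved. So as written the proposal does not establish the statement; this is a genuine gap, and you have correctly located it.

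It is worth saying that the obstacle you isolate is real, and it is precisely the step the paper's own proof dispatches in one unargued sentence (``the associated projective variety has a finite number of points''). When $N=2$ the distinctness of the internal degrees $|p_1|\neq|p_2|$ forces every bihomogeneous element of $\F[h_1,h_2]$ to be a monomial, so the bihomogeneous primes are exactly the ideals generated by subsets of the variables and the count is finite; this is the $\A(1)$ case of Section \ref{sec:toy}. For $N\geq 3$ this mechanism breaks down: with degrees $1,3,7$ (already realized by the Margolis Hopf algebra $\E(2)=\Lambda(Q_0,Q_1,Q_2)$, cf.\ Definition \ref{de:margolisalg}) the resonance $3\cdot 3=2\cdot 1+7$ makes $h_2^3+h_1^2h_3$ a bihomogeneous irreducible element of bidegree $(3,9)$, and more generally $P(h_2^3,h_1^2h_3)$ is bihomogeneous for every homogeneous $P\in\F[X,Y]$; each irreducible such $P$ yields a distinct bihomogeneous prime of height one not containing the irrelevant ideal, and there are infinitely many of them. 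Hence finiteness cannot follow from the identification with a bigraded polynomial ring alone, and neither your sketch nor the paper supplies the extra input that would be needed (for instance, a proof that only finitely many bihomogeneous primes are realized as supports of actual $A$-modules, or genuine relations in $H^{*,*}(A)$ eliminating these primes). To turn your outline into a proof you must produce that input explicitly; at present the argument is only complete for $N\leq 2$.
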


\begin{proof}
By \cite{Pal99}, there is an F-isomorphism between $H^{*,*}(A)$ and $\lim_{E \in \E} H^{*,*}(E)$. For a Margolis Hopf algebra, the latter is simply $\F[p_1, \hdots p_N]$. In particular, the associated projective variety has a finite number of points.
\end{proof}

\begin{cor} \label{cor:mvspca}
Any cover of $\spc(A)$ by two open subsets provides a Mayer-Vietoris gluing, that is, Theorem \ref{thm:mvglue} applies.
\end{cor}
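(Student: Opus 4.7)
The plan is essentially a direct appeal to what has already been established. Proposition \ref{pro:finitespc} asserts that $\spc(A)$ is finite as a set. Since any finite topological space is quasi-compact, and any subspace of a finite topological space is again finite and hence quasi-compact, every open subset of $\spc(A)$ is automatically quasi-compact.

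Now recall the hypothesis of Proposition \ref{thm:mvglue} (the Balmer--Favi gluing result): one needs a cover
$$\xymatrix{ U_1 \cap U_2 \ar[r] \ar[d] & U_1 \ar[d]\\
U_2 \ar[r] & \spc(\C)}$$
of $\spc(\C)$ by \emph{two quasi-compact} open subsets. Given an arbitrary cover $\spc(A) = U_1 \cup U_2$ by two opens, the observation of the first paragraph immediately supplies the quasi-compactness of $U_1$ and $U_2$, so the hypothesis of Proposition \ref{thm:mvglue} is met and the conclusion (both the long exact Mayer--Vietoris sequence in $\sthom$ and the gluing of objects along an isomorphism over $U_1 \cap U_2$) applies.

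There is really no obstacle here: the corollary is a packaging remark, asserting that in the Margolis setting the quasi-compactness hypothesis of Balmer--Favi's gluing theorem is vacuous. The only point where one might pause is to note that Proposition \ref{pro:finitespc} gives finiteness of the underlying set of $\spc(A)$, from which quasi-compactness of every subset (in particular every open) follows because any open cover of a finite set admits a finite subcover trivially.
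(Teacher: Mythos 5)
Your argument is correct and is essentially the paper's own: both rest on Proposition \ref{pro:finitespc}, which gives finiteness of $\spc(A)$ and hence quasi-compactness of every (open) subset, so the hypothesis of Proposition \ref{thm:mvglue} is automatically satisfied. Your version spells out the quasi-compactness point slightly more explicitly than the paper's one-line proof, but the route is the same.
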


\begin{proof}
Because of Proposition \ref{pro:finitespc}, any cover can be replaced by a finite one.
\end{proof}

Another feature of Margolis-Hopf algebras is that we can actually determine explicitely a family of points in the spectrum, closely related to Margolis' localizations.

\begin{de}
Let $S \subset [1,N]$. Let $\I^S$ be the full-subcategory of $\st(A)$ whose objects have Margolis homology concentrated over $S$, in the sense of Definition \ref{de:local}.
Let $1 \leq k \leq N$. Let $\P_{k}$ be the full-subcategory of $\stinf(A)$ whose objects are $p_k$-acyclic.
\end{de}

\begin{pro}  \label{de:primespi}
The following assertions hold. \begin{itemize}
\item For any $S \subset [1,N]$, $\I^S$ is an ideal of $\stinf(A)$,
\item Suppose that $N >1$. Then for $1 \leq k \leq N$, $\P_{k}$ is a prime ideal.
\end{itemize}
\end{pro}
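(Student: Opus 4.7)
The plan is to deduce both statements from two structural facts about Margolis homology already established earlier: the long exact sequence associated to a short exact sequence (the Corollary following the definition of $H^*(-,p_k)$), and the Künneth formula of Lemma \ref{lemma:kunneth}. Since $\P_k = \I^{[1,N]\setminus\{k\}}$, the second assertion will be a consequence of the first together with separate verifications of properness and of the prime condition.

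First I would handle (1). Thickness of $\I^S$ is immediate from the long exact sequence in $p_i$-Margolis homology: given a distinguished triangle $X \to Y \to Z$ with $X,Z \in \I^S$, exactness at $H^*(Y,p_i)$ forces this group to vanish for every $i \notin S$; retracts cause no trouble, since a retract of a zero vector space is zero. For the ideal property I apply the Künneth isomorphism $H^*(X\otimes Y, p_i) \cong H^*(X,p_i) \otimes H^*(Y,p_i)$: taking $X \in \I^S$ and $Y$ arbitrary, the right-hand side vanishes for every $i \notin S$, so $X \otimes Y \in \I^S$.

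For (2), properness comes from the direct computation $H^*(\un,p_k) = \F$: the Margolis operation $p_k$ acts as zero on the one-dimensional augmentation module $\un$, so its kernel is all of $\un$ and its image is $0$, giving $\un \notin \P_k$. The prime condition is again immediate from Künneth: if $X \otimes Y \in \P_k$, then $H^*(X,p_k) \otimes_{\F} H^*(Y,p_k) = 0$, and since we are tensoring two $\F$-vector spaces over a field, one of the factors must vanish, i.e., $X \in \P_k$ or $Y \in \P_k$. Neither verification actually needs $N > 1$; that hypothesis is there only to rule out the degenerate case in which $\P_k$ would collapse to the zero ideal (by Proposition \ref{lemma:margolisdetect}, when $N=1$ and one restricts to bounded modules, $p_1$-acyclicity already forces freeness). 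The main care in writing this up is keeping track of which operation $p_i$ is being used at each stage, but I foresee no technical obstacle, everything being a direct consequence of the two cited facts.
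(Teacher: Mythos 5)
Your proof is correct and follows essentially the same route as the paper, which likewise deduces the ideal and prime conditions from the K\"unneth formula of Lemma \ref{lemma:kunneth}; you simply spell out the details (thickness via the long exact sequence, properness via $H^*(\un,p_k)=\F$, and the fact that a tensor product of $\F$-vector spaces vanishes only if one factor does) that the paper leaves implicit.
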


\begin{proof}
These subcategories are full, contains zero. 
The last properties there are to check follows from the K\"unneth formula provided by Lemma \ref{lemma:kunneth}.
\end{proof}

\begin{rk}
The assumption that $N >1$ is simply to avoid the pathological case when $\P_{k}$ is the whole category (and therefore is not prime).
\end{rk}

\section{The stack of local representations} \label{sec:stacklocal}

Let $A$ be a Margolis-Hopf algebra, and $p_1, \hdots p_N$ its Margolis operations. The reader who wants to have a concrete example in mind through this section can follow Section \ref{sec:toy}, which parallels this one in its study of the specific case of $\A(1)$.

In this section, we show that the functor $$\stinf(A)(-) : Open(\spc(A))^{op} \rightarrow  \infty\dashmod cat,$$
defined in Proposition \ref{pro:inftyverdier}, is an $\infty$- stack (see Section \ref{sec:stack}). \\

There is however a subtlety: as observed in \cite{Bal07}, one cannot hope that the functor $\stinf(A)(-)$ is a stack for the Zariski topology. We will define another topology on on $\spc(A)$, which we call the segmental topology, with respect to which the gluing will be well behaved.

The proof makes heavy use of the Postnikov towers for $A$-modules defined by Margolis. This can be thought of as a particularly well behaved occurence of Balmer and Favi's generalized tensor idempotents (see \cite{BF11}). However, this seems to be very specific to the present situation, since: \begin{enumerate}
\item for a Thomason subset $Y$ of $\spc(A)$, the localization sequence 
$$e(Y) \rightarrow \un \rightarrow f(Y)$$
considered in \cite{BF11} can be arranged to live in the category of bounded below modules, for some specific $Y$, which will serve as building blocks for the segmental topology on $\spc(A)$,
\item the graded dual exchanges the subcategories of bounded below and bounded above modules, and the localization away from $Y$ and localization away from some other Thomason subset $Z$ sequences defined in \cite{BF11}.
\item there is some kind of connectivity hypothesis: there is no map from a $Y$ local object to a $Z$ colocal object.
\end{enumerate}

\begin{rk}
In the case of group algebras (which is orthogonal to the case we are studying here), Balmer constructed such a stack in \cite{Bal15} using the so-called \textit{sipp} topology. However, the \textit{sipp} topology and the one we define here are drastically different. Indeed, the \textit{sipp} topology is a Grothendieck topology on the category of finite $G$-sets, whereas our segmental topology is on $\spc(A)$.
\end{rk}

\subsection{Some interesting subsets of $\spc(A)$} \label{subsec:segmentalopen}

\begin{de}
Let $S \subset [1,N]$.
Let $F_S$ be the subset of $\spc(A)$ defined by 
$$F_{S} := \{ \P | \P\text{ does not contain every module with Margolis homology concentrated on }S\}.$$
\end{de}

\begin{lemma}
The sets $F_{S} \subset \spc(A)$ are Zariski closed.
\end{lemma}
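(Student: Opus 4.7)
The plan is to express $F_S$ as a union of the basic closed subsets $\supp(M)$ associated to modules $M$ with Margolis homology concentrated on $S$, and then invoke the finiteness of $\spc(A)$ to conclude that the union is in fact a closed subset.

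First I would unpack the defining condition: a prime $\P$ fails to contain \emph{every} module with Margolis homology concentrated on $S$ (equivalently, every object of $\I^S$) if and only if there exists some $M \in \I^S$ with $M \notin \P$. By the definition of the support, the condition $M \notin \P$ is precisely $\P \in \supp(M)$. Rewriting, this gives
$$F_S \;=\; \bigcup_{M \in \I^S} \supp(M).$$
Each set $\supp(M)$ is Zariski closed by construction of the topology on $\spc(A)$.

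The second step is to upgrade this a priori large union to an actual closed subset. By Proposition \ref{pro:finitespc}, $\spc(A)$ is a finite topological space, so it admits only finitely many distinct subsets. Hence the family $\{\supp(M)\}_{M \in \I^S}$ contains only finitely many distinct members, and $F_S$ is actually a finite union of closed sets, which is closed by the axioms of a topology.

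I do not anticipate any serious obstacle. The only step requiring a moment's thought is the translation of the defining property of $F_S$ into a union of supports; once this is in place, the finiteness of $\spc(A)$ supplied by Proposition \ref{pro:finitespc} does the rest. Without that finiteness hypothesis, one would instead have to produce a single universal witness $M^{\star} \in \I^S$ satisfying $F_S = \supp(M^{\star})$, but no such heavy lifting is needed in the Margolis setting.
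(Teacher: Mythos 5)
Your argument is correct and coincides with the paper's own proof: both express $F_S$ as $\bigcup_{M}\supp(M)$ over modules $M$ with Margolis homology concentrated on $S$, and both invoke the finiteness of $\spc(A)$ from Proposition \ref{pro:finitespc} to reduce this to a finite union of closed sets. The extra care you take in translating the defining condition of $F_S$ into the statement $\P\in\supp(M)$ is a welcome clarification but not a different route.
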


\begin{proof}
We can express $F_{S}$ as the  union 
$$\bigcup_X \supp(X),$$
where the union runs every $A$-module $X$ whose Margolis homology is concentrated over $S$. Since the spectrum of $A$ is finite (see Proposition \ref{pro:finitespc}), this is a finite union of closed subsets.
\end{proof}

\begin{de}
A segmental open of $\spc(A)$ is the complementary of some $F_{[a,b]^c}$, for $1 \leq a \leq b \leq N$. We denote each one of these $U_{[a,b]}$.
\end{de}

\begin{warning} \label{warning:colocalizations}
The importance of such segments in the study of modules over a Hopf algebra is explained by Margolis' killing construction \cite{Mar83,Pal01}. These are functors $\Amod \rightarrow \Amod_{[a,b]}^{nc}$, where $\Amod_{[a,b]}^{nc}$ is the full subcategory of $\Amod^{nc}$ whose Margolis homology is concentrated between $a$ and $b$.

These functors are called localization by Margolis. However, these do not correspond to Bousfield localization functors! These are in general composite of a Bousfield localization and a Bousfield colocalization.
\end{warning}

We now state some useful properties of these sets, which are required to define properly the segmental topology.

\begin{lemma}
Let $S,S'$ be segments of $[1,N]$. The following properties hold. \begin{itemize}
\item $U_{S\cap S'} \subset U_S \cap U_{S'}$.
\item $U_S \cup U_{S'} \subset U_{S\cup S'}$.
\item If $S \subset S'$, then $U_S \subset U_{S'}$.
\end{itemize}
\end{lemma}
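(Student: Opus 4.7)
The plan is to derive all three inclusions from a single monotonicity principle, after unpacking the definition of $U_S$. Recall that $\P \in U_{[a,b]} = (F_{[a,b]^c})^c$ means precisely that $\P$ contains every $A$-module whose Margolis homology is concentrated on $[a,b]^c$, and the same description extends unambiguously to arbitrary $S \subset [1,N]$ by setting $U_S := (F_{S^c})^c$ (which is what the statement of the lemma implicitly uses, since $S \cap S'$ and $S \cup S'$ need not themselves be segments).

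The core observation is a monotonicity statement: the class of modules with Margolis homology concentrated on a subset $T \subset [1,N]$ is weakly increasing in $T$. Indeed, by Definition \ref{de:local}, this condition reads $H^*(X, p_i) = 0$ for all $i \notin T$, which becomes strictly weaker as $T$ grows (equivalently as $T^c$ shrinks). Dually, the membership condition for $U_S$, namely ``$\P$ contains every module with Margolis homology concentrated on $S^c$'', becomes weaker as $S^c$ shrinks, i.e., as $S$ grows. This yields the third bullet at once: if $S \subset S'$, then $S'^c \subset S^c$, so any module concentrated on $S'^c$ is concentrated on $S^c$, so $\P \in U_S$ implies $\P \in U_{S'}$.

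The first two bullets are then immediate formal consequences of the third. For the first, $S \cap S' \subset S$ and $S \cap S' \subset S'$ give $U_{S\cap S'} \subset U_S$ and $U_{S \cap S'} \subset U_{S'}$, hence $U_{S\cap S'} \subset U_S \cap U_{S'}$. For the second, $S \subset S \cup S'$ and $S' \subset S \cup S'$ give $U_S, U_{S'} \subset U_{S \cup S'}$, hence $U_S \cup U_{S'} \subset U_{S \cup S'}$. There is no real obstacle here; the whole lemma is a bookkeeping statement packaging the monotonicity of ``concentration'' in its indexing set, and its only role in the sequel is to provide the formal compatibilities needed to check that segmental opens generate a well-defined Grothendieck topology.
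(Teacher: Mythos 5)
Your proof is correct and takes the same route as the paper, which simply asserts that the claim "uses only the definition of $F_S$"; you have unpacked that one-liner into the monotonicity of the concentration condition in its indexing set and noted, correctly, that the notation $U_{S\cup S'}$ requires extending the definition from segments to arbitrary subsets via $U_T := (F_{T^c})^c$.
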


\begin{proof}
This uses only the definition of $F_S$.
\end{proof}

The purpose of the definition of the closed sets $F_{[a,b]^c}$ is that the following property holds.

\begin{lemma} \label{lemma:supported}
Let $[a,b]$ be any interval of $[1,N]$, and $X$ be an $A$-module. Then $X$ is supported on $F_{[a,b]^c}$ if and only if the Margolis homology of $X$ is concentrated over $[a,b]^c$.
\end{lemma}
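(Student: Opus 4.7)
The plan is to prove both implications directly from the definition of $F_S$ as a union of supports. First, unwind the definition: by the proof of the previous lemma we have $F_{[a,b]^c} = \bigcup_Y \supp(Y)$, where $Y$ runs over all $A$-modules whose Margolis homology is concentrated on $[a,b]^c$, i.e.\ with $H^*(Y,p_i)=0$ for every $i\in[a,b]$.

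For the "if" direction, suppose $H^*(X,p_i)=0$ for all $i\in[a,b]$. Then $X$ itself is one of the modules $Y$ appearing in the union above, so $\supp(X)\subset F_{[a,b]^c}$ is immediate.

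For the "only if" direction I would exploit the explicit prime ideals $\P_k$ provided by Proposition \ref{de:primespi}. The key observation is that for each $i\in[a,b]$, the prime $\P_i\in\spc(A)$ does not belong to $F_{[a,b]^c}$: indeed, if $\P_i\in\supp(Y)$ for some $Y$ with Margolis homology on $[a,b]^c$, then $Y\notin\P_i$ means $Y$ is not $p_i$-acyclic, hence $H^*(Y,p_i)\neq 0$, contradicting $i\in[a,b]$. Now if $\supp(X)\subset F_{[a,b]^c}$, then $\P_i\notin\supp(X)$ for every $i\in[a,b]$, i.e.\ $X\in\P_i$, i.e.\ $H^*(X,p_i)=0$. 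This says exactly that the Margolis homology of $X$ is concentrated on $[a,b]^c$.

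The only potential obstacle is the bookkeeping around the edge case $N=1$ (where the definition of $\P_k$ in Proposition \ref{de:primespi} is excluded), but in that setting every interval $[a,b]$ is either empty or all of $[1,N]$ and the statement is vacuous, so no separate argument is needed. Nothing else in the proof relies on additional structure beyond the description of $F_{[a,b]^c}$ as a union of supports and the primality of $\P_i$.
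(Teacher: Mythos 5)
Your proof is correct and follows essentially the same route as the paper's: the ``if'' direction is immediate from the description of $F_{[a,b]^c}$ as a union of supports, and the ``only if'' direction uses exactly the paper's key observation that each $\P_i$ with $i\in[a,b]$ lies outside $F_{[a,b]^c}$ (equivalently, contains every module with Margolis homology concentrated on $[a,b]^c$), so that $\supp(X)\subset F_{[a,b]^c}$ forces $X\in\P_i$ and hence $H^*(X,p_i)=0$. Your explicit verification that $\P_i\notin F_{[a,b]^c}$ and the remark on the $N=1$ edge case are welcome extra care, but do not change the argument.
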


\begin{proof}
Let $a,b,X$ as in the hypothesis. By definition, $X$ is supported on $F_{[a,b]^c}$ if 
$$\supp(X) \subset F_{[a,b]^c}.$$
This is equivalent to the assertion $X$ belongs to any prime ideal which contains all modules whose Margolis cohomology is concentrated over $[a,b]^c$. In particular, $X$ belongs to each $\P_i$ (see Definition \ref{de:primespi}), for $i \in [a,b]$, so that $H^*(X,p_i) = 0$ for those $i$. Thus, the Margolis homology of $X$ is concentrated over $[a,b]^c$.

The converse is trivial.
\end{proof}

\begin{pro} \label{pro:zariskicover}
Let $1 = i_0 \leq i_1 \leq \hdots \leq i_k = N$ a partition of $[1,N]$. Then, 
$$\{ U_{[i_l,i_{l+1}]} \rightarrow \spc(A)\}_{0 \leq l < k}$$
is a cover in the Zariski topology.
\end{pro}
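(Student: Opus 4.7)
The plan is to reduce the covering assertion to the set-theoretic identity
\[
\bigcap_{l=0}^{k-1} F_{[i_l, i_{l+1}]^c} = \emptyset,
\]
which is exactly the condition that the segmental opens $U_{[i_l, i_{l+1}]} = F_{[i_l, i_{l+1}]^c}^c$ exhaust $\spc(A)$. I would argue by contradiction: suppose there is a prime $\P \in \spc(A)$ lying in every $F_{[i_l, i_{l+1}]^c}$. Unwinding the definition of $F_S$, for each $0 \le l < k$ one can then select a module $X_l \in \st(A)$ whose Margolis homology is concentrated on $[i_l, i_{l+1}]^c$ and such that $X_l \notin \P$.

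The crucial step is to tensor these witnesses together. Set $X := X_0 \otimes \cdots \otimes X_{k-1}$; iterating the Künneth formula (Lemma \ref{lemma:kunneth}) gives
\[
H^*(X, p_j) \cong \bigotimes_{l=0}^{k-1} H^*(X_l, p_j),
\]
which vanishes unless $j$ lies in $\bigcap_l [i_l, i_{l+1}]^c$. Because $1 = i_0 \le \cdots \le i_k = N$ partitions $[1,N]$ into consecutive closed intervals $[i_l, i_{l+1}]$ whose union is the whole of $[1,N]$, the intersection of their complements is empty, so $H^*(X, p_j) = 0$ for every $j \in [1,N]$.

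Now each $X_l$ is compact, hence finite-dimensional, so $X$ is finite-dimensional and in particular bounded. Proposition \ref{lemma:margolisdetect} then forces $X \simeq 0$ in $\st(A)$, whence $X \in \P$. Since $\P$ is prime, one of the tensor factors $X_l$ must itself belong to $\P$, contradicting our choice. This finishes the argument.

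The step I expect to require the most care is the invocation of Proposition \ref{lemma:margolisdetect}: I must ensure the chosen $X_l$ are bounded so that Margolis detection genuinely applies. This is automatic here because the primes $\P$ in Balmer's formalism live inside the compact (hence finite-dimensional) subcategory $\st(A)$, so the witnesses $X_l$ selected from such $\P$ are necessarily bounded. Modulo this verification, everything else is a direct combination of the Künneth formula, the combinatorics of the partition, and the definition of a prime ideal.
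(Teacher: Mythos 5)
Your argument is correct, and it takes a genuinely different route from the one the paper gives. The paper's proof works at the level of supports: it invokes Lemma~\ref{lemma:supported} to translate ``supported on $F_{[i_l,i_{l+1}]^c}$'' into ``Margolis homology concentrated on $[i_l,i_{l+1}]^c$'', and then applies Proposition~\ref{lemma:margolisdetect} to conclude that any compact object supported in $\bigcap_l F_{[i_l,i_{l+1}]^c}$ is trivial (the paper's wording conflates a point of $\spc(A)$ with a module, which you avoid entirely). You instead unwind the definition of $F_S$ at a single prime $\P$ in the intersection, extract witness modules $X_l \notin \P$ with prescribed Margolis supports, tensor them, and use the K\"unneth isomorphism (Lemma~\ref{lemma:kunneth}) together with the primality of $\P$ to derive a contradiction. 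Both proofs hinge on Proposition~\ref{lemma:margolisdetect}; yours substitutes K\"unneth plus primality for Lemma~\ref{lemma:supported}. What your version buys is directness and rigor --- you never need to produce a module whose support equals the closed intersection, and the boundedness hypothesis of Proposition~\ref{lemma:margolisdetect} is discharged transparently because the witnesses $X_l$ live in the compact subcategory $\st(A)$ and so are finite-dimensional. The paper's version is shorter since Lemma~\ref{lemma:supported} was already established, but needs the reader to fill in the bridge from ``all supported modules vanish'' to ``the closed set is empty.''
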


\begin{proof}
Let $M \in \left( \bigcup_{0 \leq l < k} U_{[i_l,i_{l+1}]} \right)^c$. This implies that $M \in F_{[i_l,i_{l+1}]^c}$, for all $0 \leq l < k$. Thus, by Lemma \ref{lemma:supported}, $M$ has no Margolis homology, and $M \steq 0$ by Lemma \ref{lemma:margolisdetect}.
\end{proof}

\subsection{Margolis' model for locally supported objects revisited}

We now turn to the comparison between the local categories $\st(A)(U)$, for a Zariski open $U$, and the categories of bounded below local $A$-modules in the sense of Margolis. Let's first recall the definitions and first properties of the latter. For the proofs in this case, the reader is referred to \cite{Mar83}. One critical hypothesis to apply these results is that the modules under consideration should be bounded below. We start this subsection by some notations to emphasize this point.

\begin{de}
Let $\st(A)^{\geq}$ (resp. $\st(A)^{\leq}$) be the full subcategory of $\st(A)^{nc}$ whose objects are bounded below (resp. bounded above) $A$-modules. The categories $\st(A)^{\leq,nc}_{[a,b]}$ and $\st(A)^{\geq,nc}_{[a,b]}$ are the full subcategories of modules whose Margolis homology is concentrated on $[a,b]$, for $1\leq a \leq b \leq N$.
\end{de}

\begin{pro}[see \cite{Mar83}]
Let $[a,b]$ be a segment of $[1,N]$. \begin{itemize}
\item There are functors 
$$(-)_{[a,b]} : \st(A)^{\leq} \rightarrow \st(A)^{\leq,nc}_{[a,b]}$$
and 
$$(-)_{[a,b]} : \st(A)^{\geq} \rightarrow \st(A)^{\geq,nc}_{[a,b]}$$
such that $M$ and $M_{[a,b]}$ are related by a zig-zag of maps (in either category) which induces an equivalence in Margolis homology $H(-,p_i)$, for $a \leq i \leq b$.
\item If $b = N$, then the functor 
$$(-)_{[a,N]} : \st(A)^{\geq} \rightarrow \st(A)^{\geq,nc}_{[a,N]}$$
is a Bousfield localization, and in particular there is a natural transformation $M \rightarrow M_{[a,N]}$, which induces an equivalence in Margolis homology $H(-,p_i)$, for $a \leq i \leq N$.
\item If $a = 1$, then the functor 
$$(-)_{[1,b]} : \st(A)^{\leq} \rightarrow \st(A)^{\leq,nc}_{[1,b]}$$
is a Bousfield localization, and in particular there is a natural transformation $M \rightarrow M_{[1,b]}$, which induces an equivalence in Margolis homology $H(-,p_i)$, for $1 \leq i \leq b$.
\end{itemize}
\end{pro}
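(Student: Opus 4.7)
My plan is to follow the killing construction of Margolis \cite{Mar83}, adapted to our setting. Given $M$ and a segment $[a,b] \subseteq [1,N]$, the goal is to modify $M$ by iteratively attaching or removing cells so as to eliminate $H(-, p_i)$ for every $i \notin [a,b]$, while preserving Margolis homology at $j \in [a,b]$. To kill a given class $x \in H(M, p_i)$, one chooses an $A$-module $F$ that is $p_j$-acyclic for every $j \in [a,b]$ together with a map $F \to M$ hitting $x$ in $p_i$-homology, and takes the cofiber $M \to M'$; by the long exact sequence of Margolis homology, the result $M'$ satisfies $H(M', p_j) \cong H(M, p_j)$ for $j \in [a,b]$, while $x$ has been killed in $H(M', p_i)$. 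The existence of such building blocks $F$ with prescribed Margolis support follows from the ordered exterior generator hypothesis of Definition \ref{de:margolisalg} together with the K\"unneth formula (Lemma \ref{lemma:kunneth}).

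Killing at indices $i < a$ versus $i > b$ requires cells of opposite ``slopes'' relative to the grading of $M$: cells attached in degrees above those of $M$ preserve boundedness below, and cells attached in degrees below preserve boundedness above, but one cannot combine both simultaneously without losing control of boundedness. This is precisely why in the general two-sided case $M$ and $M_{[a,b]}$ are only related by a zig-zag of maps that alternates cofiber and fiber stages. The iteration converges in each fixed internal degree because $A$ is finite dimensional and $M$ is bounded on one side, and the construction can be made functorial up to homotopy by choosing the cells naturally in $M$.

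The one-sided hypotheses remove the above obstruction. When $b = N$, only indices $i < a$ need to be killed; in $\st(A)^{\geq}$ this can be achieved by cone attachment alone, yielding a genuine natural transformation $\eta_M : M \to M_{[a,N]}$. To show that $\eta_M$ is the unit of a Bousfield localization, I would verify that $\sthom_A(\mathrm{fib}(\eta_M), N) \simeq 0$ for every $N \in \st(A)^{\geq,nc}_{[a,N]}$: by construction $E := \mathrm{fib}(\eta_M)$ has Margolis homology concentrated on $[1, a-1]$, and a K\"unneth-style orthogonality argument combined with Proposition \ref{lemma:margolisdetect} then gives the required vanishing, since the relevant tensor products have disjoint Margolis support and are therefore free. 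The case $a = 1$ in $\st(A)^{\leq}$ follows by graded linear duality, which exchanges $\st(A)^{\geq}$ with $\st(A)^{\leq}$ and swaps the roles of ``above'' and ``below''.

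The main obstacle I anticipate is the explicit choice of the building blocks $F$ and the careful control on their Margolis homology support at each step; this is where the structural hypothesis of Definition \ref{de:margolisalg} (ordered exterior generators, so that one can use tensor products of cells over $\Lambda(p_i)$) plays its role. The ensuing convergence and functoriality are then routine transfinite inductions using the finite dimensionality of $A$ and the boundedness of $M$. All of this is essentially contained in \cite{Mar83}, so the present task is mainly one of translation: verifying that the Margolis-Hopf algebra hypothesis provides exactly the features (K\"unneth formula and the freeness criterion of Proposition \ref{lemma:margolisdetect}) required to carry the classical arguments through in the generality we need.
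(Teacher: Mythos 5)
The paper does not prove this proposition; it cites Margolis's book \cite{Mar83} and relies on his killing construction. Your sketch correctly identifies the overall strategy — build $M_{[a,b]}$ by iteratively coning off Margolis classes outside $[a,b]$, note that killing indices $i<a$ versus $i>b$ requires cells attached in opposite directions relative to the grading (hence the zig-zag in the two-sided case), and observe that the one-sided cases admit a single natural map — and this is indeed what Margolis does.

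However, the orthogonality argument you propose for the Bousfield localization step has a real gap. You want $\sthom_A(E,N)\simeq 0$ where $E=\mathrm{fib}(\eta_M)$ has Margolis support on $[1,a-1]$ and $N$ has Margolis support on $[a,N]$, and you argue that $DE\otimes N$ has empty Margolis support by K\"unneth (Lemma \ref{lemma:kunneth}) and is therefore free by Proposition \ref{lemma:margolisdetect}. But $E$ and $N$ are bounded below, so $DE$ is bounded \emph{above}, and $DE\otimes N$ is in general unbounded in both directions. The freeness criterion of Proposition \ref{lemma:margolisdetect} requires the module to be bounded on one side, precisely because there exist unbounded modules (e.g.\ $\Lambda(p)[p^{\pm 1}]$-type objects) with vanishing Margolis homology that are not free. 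So ``disjoint Margolis support $\Rightarrow$ tensor is free'' is false without a boundedness hypothesis, and it is exactly the boundedness hypothesis that fails here.

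The correct argument uses the second clause of Definition \ref{de:margolisalg} (the $p_i$ are arranged in increasing degree) in a sharper, quantitative way: $E$ is built from cells that are free over the subalgebras generated by the low-degree $p_i$, and the degree ordering forces any stable map from such a cell into a bounded-below module concentrated on $[a,N]$ to vanish by a connectivity estimate, not by a K\"unneth argument. This is the ``connectivity hypothesis: there is no map from a $Y$-local object to a $Z$-colocal object'' that the paper flags as special to the Margolis setting in the remarks preceding the segmental topology. Your sketch mentions the ordered-generator hypothesis but deploys it as supplying building blocks rather than as supplying the degree estimate that actually makes the orthogonality go through, and that is the missing ingredient.
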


\begin{rk}
Equivalently, localization at $[1,b]$ for bounded above $A$-modules provides a colocalization functor in the category of bounded above $A$-modules. As he needed to work entierly in the category of bounded below $A$-modules, localization at $[a,N]$ and colocalization at $[1,b]$ are the functors Margolis defined.
\end{rk}

\begin{pro} \label{pro:verdierloc}
Let $U_{[a,b]}$ be a segmental open. Then Margolis constructions
$$ (-)_{[a,b]} : \st(A) \rightarrow \st(A)_{[a,b]}^{\leq,nc}$$
and 
$$ (-)_{[a,b]} : \st(A) \rightarrow \st(A)_{[a,b]}^{\geq,nc}$$
factors through $\st(A)(U_{[a,b]})$.
\end{pro}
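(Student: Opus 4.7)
The plan is to invoke the universal property of the $\infty$-categorical Verdier quotient from Proposition \ref{pro:inftyverdier}: an exact functor $F : \stinf(A) \to \D$ factors (uniquely up to contractible choice) through $\stinf(A)(U_{[a,b]})$ if and only if $F$ sends every object of $\stinf(A)_{U_{[a,b]}^c}$ to zero. By definition, $U_{[a,b]}^c = F_{[a,b]^c}$, so the whole proposition reduces to checking that the Margolis construction $(-)_{[a,b]}$ annihilates the full subcategory of objects supported on $F_{[a,b]^c}$.

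So first I would pick a module $X \in \st(A)$ that is supported on $F_{[a,b]^c}$. By Lemma \ref{lemma:supported}, this is equivalent to the assertion that $H^*(X,p_i) = 0$ for every $i \in [a,b]$. Then I apply the cited properties of $(-)_{[a,b]}$: by construction there is a zig-zag of maps between $X$ and $X_{[a,b]}$ which induces isomorphisms on $H(-,p_i)$ for every $a \leq i \leq b$. Combined with the previous vanishing, this gives $H^*(X_{[a,b]}, p_i) = 0$ for all $i \in [a,b]$. On the other hand, by construction $X_{[a,b]}$ lies in the subcategory $\st(A)^{\leq,nc}_{[a,b]}$ (resp.\ $\st(A)^{\geq,nc}_{[a,b]}$), whose objects have Margolis homology concentrated on $[a,b]$, so the vanishing $H^*(X_{[a,b]}, p_i)=0$ for $i \notin [a,b]$ is automatic.

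Combining the two, every Margolis homology group of $X_{[a,b]}$ vanishes. Since $X_{[a,b]}$ is by construction bounded above or bounded below, Proposition \ref{lemma:margolisdetect} applies and forces $X_{[a,b]}$ to be free, i.e.\ zero in the stable module category. Thus $(-)_{[a,b]}$ kills $\stinf(A)_{U_{[a,b]}^c}$, and the universal property produces the desired factorization through $\st(A)(U_{[a,b]})$.

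The only real subtlety (and what I expect to be the main point to verify carefully) is that the Margolis constructions, which are assembled from Bousfield (co)localizations and possibly a zig-zag, genuinely lift to exact $\infty$-functors on $\stinf(A)$, so that the universal property of the $\infty$-categorical cofiber really does apply; once this is granted, the vanishing argument above is the content of the proof.
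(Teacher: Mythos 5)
Your proof is correct and follows essentially the same route as the paper: both use Lemma \ref{lemma:supported} to translate support on $F_{[a,b]^c}$ into vanishing of $H(-,p_i)$ for $i\in[a,b]$, then the Margolis-homology-preservation property of $(-)_{[a,b]}$ together with Proposition \ref{lemma:margolisdetect} to conclude that such modules are killed, whence the factorization through the Verdier quotient. Your write-up is in fact more careful than the paper's (spelling out why \emph{all} Margolis homology of $X_{[a,b]}$ vanishes and flagging the boundedness hypothesis needed for Proposition \ref{lemma:margolisdetect}), but the argument is the same.
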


\begin{proof}
By Lemma \ref{lemma:supported}, the modules $M$ which are supported on $F_{[a,b]^c}$ have Margolis homology concentrated over $[a,b]^c$, so they become zero in $\st(A)_{[a,b]}^{nc}$ (by Lemma \ref{lemma:margolisdetect}). Thus the construction factorizes through the Verdier quotient.
The second statement is analogous.
\end{proof}

\begin{lemma}[see \cite{Mar83}] \label{lemma:compacityloc}
The localization functor $(-)_{[a,N]}$ in compact $A$-modules can be obtained for each module $M$ as a colimit
$$\colim_n sk^{[a,N]}_n(M)$$
where each $sk^{[a,N]}_n(M)$ is compact, and the maps $sk^{[a,N]}_n(M) \rightarrow sk^{[a,N]}_{n+1}(M)$ has a cofiber whose Margolis homology is concentrated over $[a,N]$.
The dual statement holds for the segments $[1,b]$ in the category of bounded above $A$-modules.
\end{lemma}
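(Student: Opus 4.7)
The plan is to exhibit $M_{[a,N]}$ as a sequential colimit of compact submodules of itself, each lying in $\st(A)^{\geq,nc}_{[a,N]}$. The key observation is that this subcategory is closed under cofibers in $\stinf(A)^{nc}$: if $X,Y \in \st(A)^{\geq,nc}_{[a,N]}$ and $X \to Y \to Z$ is a cofiber sequence, the long exact sequence for Margolis homology (stated as the corollary following the definition of $H^*(-,p_k)$) forces $H^*(Z,p_i)=0$ for $i \notin [a,N]$, so $Z \in \st(A)^{\geq,nc}_{[a,N]}$. Therefore, \emph{any} sequential filtration of $M_{[a,N]}$ by compact modules in $\st(A)^{\geq,nc}_{[a,N]}$ automatically has the claimed cofiber property, and we are reduced to building such a filtration.

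To construct it I would follow Margolis' cellular technique \cite{Mar83}. Enumerate a countable generating family $\{x_k\}_{k\geq 0}$ of $M_{[a,N]}$ as an $A$-module; this is available because $M_{[a,N]}$ is bounded below and each graded piece is countable-dimensional over the finite-dimensional $A$. Set $sk_0^{[a,N]}(M) = 0$ and inductively define $sk_{n+1}^{[a,N]}(M)$ to be a compact $A$-submodule of $M_{[a,N]}$ that contains $sk_n^{[a,N]}(M)$ together with $x_n$, and that has been enlarged by further finite-dimensional pieces of $M_{[a,N]}$ which witness the triviality (inside $M_{[a,N]}$) of any $p_i$-Margolis class, for $i \notin [a,N]$, introduced by adjoining $x_n$. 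By construction each $sk_n^{[a,N]}(M)$ is a compact object lying in $\st(A)^{\geq,nc}_{[a,N]}$, and $\colim_n sk_n^{[a,N]}(M)$ contains every generator $x_k$, hence coincides with $M_{[a,N]}$.

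The main obstacle is the termination of the fix-up step: one must show that killing the spurious Margolis classes can always be done by a finite-dimensional enlargement inside $M_{[a,N]}$. This relies on the bounded-below graded setting combined with the finite-dimensionality of $A$, which forces degree-wise finiteness of the Margolis classes appearing in each enlargement and makes the procedure finite at each inductive step. Once termination is established, compactness of $sk_{n+1}^{[a,N]}(M)$ is preserved and the cofiber property is automatic from the closure observation of the first paragraph. The dual statement for $[1,b]$-localization on bounded-above modules is handled symmetrically, applying the same argument to graded duals with the roles of $[a,N]$ and ``bounded below'' replaced by $[1,b]$ and ``bounded above''.
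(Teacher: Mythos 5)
The paper's own ``proof'' of this lemma is a bare citation of Margolis's cell-attachment construction, so a genuine argument would be welcome; unfortunately yours both misreads the statement and leaves its one hard step unproved. On the reading: in Margolis's construction the tower starts at $sk_0^{[a,N]}(M)=M$, and each passage $sk_n^{[a,N]}(M)\to sk_{n+1}^{[a,N]}(M)$ attaches finitely many cells killing the spurious classes in $H^*(-,p_i)$ for $i\notin[a,N]$; the cofibers of these maps therefore have Margolis homology concentrated on the \emph{complement} $[a,N]^c$ (the statement as printed has the complement wrong). That is the version the paper actually uses: Proposition \ref{pro:comparisonMargolis} needs the maps $M'\to sk_n^{[a,N]}(M')$ and the fact that their cofibers are supported on $F_{[a,N]^c}$ (equivalently, by Lemma \ref{lemma:supported}, concentrated on $[a,N]^c$), so that these maps become invertible in $\st(A)(U_{[a,N]})$ and furnish the cospans. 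Your tower, which starts at $0$ and consists of compact submodules of $M_{[a,N]}$ all lying in $\st(A)^{\geq,nc}_{[a,N]}$, receives no map from $M$ and so cannot feed into that argument. The two readings are in fact incompatible in general: if $sk_0=M$ and every successive cofiber were concentrated on $[a,N]$, the total cofiber of $M\to M_{[a,N]}$ would be concentrated on $[a,N]$, whereas the long exact sequence in Margolis homology shows it is concentrated on $[a,N]^c$ and is nonzero as soon as $M$ has Margolis homology outside $[a,N]$. (Your closure-under-cofibers observation in the first paragraph is correct, but it only handles the easy half of the wrong statement.)

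Even granting your formulation, the step you yourself flag as ``the main obstacle'' is not closed, and it is exactly the content of the lemma. When you adjoin a witness $y$ with $p_iy=x$ to a compact submodule $K\subset M_{[a,N]}$, the enlargement $K+Ay$ spreads over a degree range of width the top degree of $A$; it can acquire new spurious classes in $H^*(-,p_j)$, $j\notin[a,N]$, sitting in degrees \emph{higher} than those of the classes you set out to kill (since the top degree of $A$ exceeds $|p_i|$), and their witnesses generate further submodules in yet higher degrees. Degree-wise finiteness of the bounded-below module $M_{[a,N]}$ bounds each graded piece but gives no a priori bound on the degrees occurring across iterations, so it does not show that the closure of this process is finite-dimensional; as written, your $sk_{n+1}^{[a,N]}(M)$ may fail to be compact. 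Controlling this escalation --- in Margolis's construction, via the strictly increasing connectivity of the cells attached at each stage, which is what makes the tower a Postnikov-type tower --- is precisely what has to be proved, and it is the part your argument omits.
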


\begin{proof}
This is a consequence of Margolis' explicit construction of these localization functors (see \cite{Mar83}).
\end{proof}

Even if local bounded below $A$-modules are not compact in general, there is still a natural notion of smallness in this category.

\begin{de}
Let $\st(A)^{\leq}_{[a,b]}$ be the full subcategory of $\st(A)^{\leq,nc}_{[a,b]}$ whose objects are in the  essential image of the functor 
$$(-)_{[a,b]} : \st(A) \rightarrow \st(A)^{\leq,nc}_{[a,b]},$$
obtained as the restriction of Margolis' construction to compact $A$-modules.
We define $\st(A)^{\geq}_{[a,b]}$ accordingly.
\end{de}

\begin{lemma} \label{lemma:locandcompacity}
Let $[a,N]$ be a segment. There is a natural equivalence, for $M,M' \in \st(A)$:
$$ \hom_{\st(A)^{\geq}_{[a,N]}}(M_{[a,N]},M'_{[a,N]}) \eq \colim_n \hom_{\st(A)}(M, sk^{[a,N]}_n(M')).$$
\end{lemma}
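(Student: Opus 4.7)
The plan is to combine the Bousfield-localization property of $(-)_{[a,N]}$ with the compactness of $M$. First, by the previous proposition, the functor
$$(-)_{[a,N]} : \st(A)^{\geq} \rightarrow \st(A)^{\geq,nc}_{[a,N]}$$
is a Bousfield localization, and in particular it is left adjoint to the inclusion of the local subcategory. Since $M'_{[a,N]}$ is by construction local, the unit map $M \to M_{[a,N]}$ induces a natural equivalence
$$ \hom_{\st(A)^{\geq}_{[a,N]}}(M_{[a,N]},M'_{[a,N]}) \simeq \hom_{\st(A)^{\geq}}(M,M'_{[a,N]}).$$

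Second, by Lemma \ref{lemma:compacityloc}, $M'_{[a,N]}$ is the filtered colimit $\colim_n sk^{[a,N]}_n(M')$ along a sequence of compact $A$-modules. Since $M$ is a compact (\emph{i.e.} finitely generated, hence finite-dimensional) $A$-module, it is a compact object of $\st(A)^{\geq}$: a map out of $M$ into such a filtered colimit factors through a finite stage because any element of $M$ has only finitely many non-trivial $A$-translates. Thus $\hom_{\st(A)^{\geq}}(M,-)$ commutes with the filtered colimit defining $M'_{[a,N]}$, giving
$$\hom_{\st(A)^{\geq}}(M,M'_{[a,N]}) \simeq \colim_n \hom_{\st(A)^{\geq}}(M, sk^{[a,N]}_n(M')).$$
Since both $M$ and $sk^{[a,N]}_n(M')$ are compact, each term on the right coincides with $\hom_{\st(A)}(M, sk^{[a,N]}_n(M'))$, and chaining the two equivalences yields the claim.

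The main point requiring care is the compactness statement used in the second step. One has to verify that a finitely generated $A$-module remains a compact object of the larger category $\st(A)^{\geq}$ of bounded-below modules, \emph{i.e.} that stable maps out of it commute with the sequential colimits produced by Margolis' skeletal construction. This can be checked by standard arguments: at the level of the underlying model category the colimit $\colim_n sk^{[a,N]}_n(M')$ is a directed union, stable homotopy classes of maps out of a finitely generated module commute with directed unions, and the cofibers $sk^{[a,N]}_n(M') \to sk^{[a,N]}_{n+1}(M')$ provided by Lemma \ref{lemma:compacityloc} ensure no projective summand issues arise in passing to $\st(A)^{\geq}$.
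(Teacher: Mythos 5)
Your proof is correct and follows essentially the same route as the paper's: the adjunction coming from the Bousfield localization identifies maps out of $M_{[a,N]}$ with maps out of $M$ into the local object, Lemma \ref{lemma:compacityloc} exhibits $M'_{[a,N]}$ as a filtered colimit of compact stages, and compactness of $M$ commutes the hom past the colimit. The paper's version is terser (it simply invokes ``compacity of $M$'' at the last step), whereas you spell out why a finitely generated module remains compact against Margolis' skeletal colimits; that extra care is welcome but does not change the argument.
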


\begin{proof}
By adjunction, 
$$ \hom_{\st(A)^{\geq}_{[a,N]}}(M_{[a,N]},M'_{[a,N]}) \eq \hom_{\st(A)^{\geq}}(M,M'_{[a,N]}).$$
 Now, by Lemma \ref{lemma:compacityloc},
$$\hom_{\st(A)^{\geq}}(M,M'_{[a,N]}) \eq \hom_{\st(A)}(M,\colim_n sk^{[a,N]}_n(M')).$$
The result follows by compacity of $M$.
\end{proof}

\begin{pro} \label{pro:comparisonMargolis}
Let $[a,N]$ be a segment. The functor
$$ (-)_{[a,N]} : \st(A)(U_{[a,N]}) \rightarrow \st(A)_{[a,N]}^{\geq}$$
provided by Proposition \ref{pro:verdierloc} is an equivalence of categories.
\end{pro}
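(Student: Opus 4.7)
The functor $\Phi := (-)_{[a,N]}$ factors through the Verdier quotient by Proposition \ref{pro:verdierloc}, yielding the functor in question. The plan is to verify essential surjectivity and full faithfulness separately. Essential surjectivity is immediate: by the very definition of $\st(A)_{[a,N]}^{\geq}$, every object is of the form $M_{[a,N]}$ for some compact $M \in \st(A)$, hence is the image of $M$ under $\Phi$.

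For full faithfulness, the plan is to compare both mapping spaces as filtered colimits. On the target side, Lemma \ref{lemma:locandcompacity} already identifies
$$\hom_{\st(A)_{[a,N]}^{\geq}}(M_{[a,N]}, M'_{[a,N]}) \simeq \colim_n \hom_{\st(A)}(M, sk_n^{[a,N]}(M')),$$
via the Bousfield adjunction in $\st(A)^{\geq}$ and the compactness of $M$. On the source side, the $\infty$-categorical Verdier quotient formula from Proposition \ref{pro:inftyverdier} gives
$$\hom_{\st(A)(U_{[a,N]})}(M, M') \simeq \colim_{f : M' \to M''} \hom_{\st(A)}(M, M''),$$
where the colimit runs over arrows $f$ in $\st(A)$ whose cofiber lies in $\B_{\text{cpt}} := \st(A) \cap \st(A)_{F_{[a,N]^c}}$, i.e.\ (by Lemma \ref{lemma:supported}) compact modules with Margolis homology concentrated over $[1,a-1]$.

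The heart of the proof is then a cofinality comparison between these two filtered systems. First, each Margolis approximation $sk_n^{[a,N]}(M')$ supplied by Lemma \ref{lemma:compacityloc} fits into the Verdier system: by compactness of $M'$, the Bousfield unit $M' \to M'_{[a,N]} \simeq \colim sk_n^{[a,N]}(M')$ factors through some $sk_{n_1}^{[a,N]}(M')$, and (via the octahedral axiom applied to $M' \to sk_n^{[a,N]}(M') \to M'_{[a,N]}$ combined with the Bousfield property) the cofiber of the resulting map $M' \to sk_n^{[a,N]}(M')$ lies in $\B_{\text{cpt}}$. Conversely, given any $f : M' \to M''$ in the Verdier system, applying $(-)_{[a,N]}$ yields an equivalence $M'_{[a,N]} \simeq M''_{[a,N]}$; composing with its inverse produces a map $M'' \to M'_{[a,N]} \simeq \colim sk_n^{[a,N]}(M')$, which by compactness of $M''$ factors through some $sk_{n_0}^{[a,N]}(M')$. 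Passing to a sufficiently high stage reconciles both compatibility issues, delivering the desired cofinal comparison.

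The main obstacle I anticipate is this cofinality/compatibility management: matching the two filtered systems up to cofinality while correctly tracking the Margolis homology of every intermediate cofiber, and then upgrading the comparison from $\pi_0$-level Hom-sets to full $\infty$-categorical mapping spaces. This requires combining the compactness of both $M$ and $M'$, the Bousfield adjunction for $(-)_{[a,N]}$ on $\st(A)^{\geq}$, and the explicit structure of Margolis' Postnikov tower from Lemma \ref{lemma:compacityloc} in a controlled way.
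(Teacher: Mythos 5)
Your proposal is correct and follows essentially the same route as the paper: essential surjectivity is immediate from the definition of $\st(A)_{[a,N]}^{\geq}$, and full faithfulness is obtained by identifying both Hom-sets with the filtered colimit $\colim_n \hom_{\st(A)}(M, sk^{[a,N]}_n(M'))$ via Lemma \ref{lemma:locandcompacity} and the calculus of fractions in the Verdier quotient. Your explicit cofinality comparison between the Margolis skeleta and the full system of denominators is just a more systematic packaging of the paper's separate fullness and faithfulness checks (and your reading that the cofiber of $M' \to sk^{[a,N]}_n(M')$ has Margolis homology concentrated on $[a,N]^c$, hence is supported on $F_{[a,N]^c}$, is the intended one).
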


\begin{proof}
We actually check that this functor is fully faithful and essentially surjective. \begin{itemize}
\item By definition of $\st(A)_{[a,N]}^{\geq}$, $ (-)_{[a,N]}$ is essentially surjective.
\item Let's show that $ (-)_{[a,N]}$ is full. Pick a map in $$ \hom_{\st(A)^{\geq}_{[a,N]}}(M_{[a,N]},M'_{[a,N]}) \eq \colim_n \hom_{\st(A)}(M, sk^{[a,N]}_n(M')),$$
where the isomorphism comes from Lemma \ref{lemma:locandcompacity}. Recall that for all $n$, the cofiber of $M' \rightarrow sk^{[a,N]}_n(M')$ has Margolis homology concentrated on ${[a,N]}$, that is, by Lemma \ref{lemma:supported}, supported on $F_{[a,N]^c}$. Thus, the map $f$ comes from the cospan
$$ \xymatrix{ & sk^{[a,N]}_n(M') & \\ M \ar[ur]^{\tilde{f}} & & M' \ar[ul] }$$
in $\st(A)(U_{[a,N]})$, for some $n \geq 0$.
\item Let's show that $ (-)_{[a,N]}$ is faithful. Let $f \in \hom_{\st(A)(U_{[a,N]})}(M,M')$ be a map that becomes trivial when we apply $ (-)_{[a,N]}$. Recall that we have a calculus of fractions in the Verdier quotient $\st(A)(U_{[a,N]})$. Moreover, because
$$ \hom_{\st(A)^{\geq}_{[a,N]}}(M_{[a,N]},M'_{[a,N]}) \eq \colim_n \hom_{\st(A)}(M, sk^{[a,N]}_n(M')),$$
by Lemma \ref{lemma:locandcompacity}, the map $f$ can be expressed as a cospan
$$ \xymatrix{ & sk^{[a,N]}_n(M') & \\ M \ar[ur]^{\tilde{f}} & & M' \ar[ul] }$$
in $\st(A)(U_{[a,N]})$, for some number $n$.
In particular, if $f_{[a,N]}$ is zero, then the corresponding map $\tilde{f}$ is the zero map in $\st(A)$, and thus $f=0$.
\end{itemize}
\end{proof}

The dual statement holds, for similar reasons.

\begin{pro}
Let $[1,b]$ be a segment. The functor
$$ (-)_{[1,b]} : \st(A)(U_{[1,b]}) \rightarrow \st(A)_{[1,b]}^{\leq}$$
provided by Proposition \ref{pro:verdierloc} is an equivalence of categories.
\end{pro}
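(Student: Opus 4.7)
The plan is to mirror the proof of Proposition \ref{pro:comparisonMargolis} essentially verbatim, replacing bounded below $A$-modules by bounded above ones, the segment $[a,N]$ by $[1,b]$, and invoking the dual half of each intermediate statement. All the needed ingredients are already on the table: the Bousfield localization $(-)_{[1,b]} : \st(A)^{\leq} \to \st(A)^{\leq,nc}_{[1,b]}$, the Verdier-quotient factorization from Proposition \ref{pro:verdierloc}, and the dual half of Lemma \ref{lemma:compacityloc} producing a compact skeletal sequence $sk^{[1,b]}_n(M)$ whose colimit recovers $M_{[1,b]}$.

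First I would record the dual of Lemma \ref{lemma:locandcompacity}: for compact $M,M' \in \st(A)$,
$$ \hom_{\st(A)^{\leq}_{[1,b]}}(M_{[1,b]},M'_{[1,b]}) \cong \colim_n \hom_{\st(A)}(M, sk^{[1,b]}_n(M')),$$
which follows from the Bousfield adjunction together with the compacity of $M$, exactly as in the $[a,N]$ case. With this identification available, the three required properties are checked in parallel to Proposition \ref{pro:comparisonMargolis}. \emph{Essential surjectivity} holds by the very definition of $\st(A)^{\leq}_{[1,b]}$ as the essential image of $(-)_{[1,b]}$ on compact modules. For \emph{fullness}, any $g : M_{[1,b]} \to M'_{[1,b]}$ is represented by some $\tilde{g} : M \to sk^{[1,b]}_n(M')$ in $\st(A)$; the cofiber of $M' \to sk^{[1,b]}_n(M')$ is, by the dual part of Lemma \ref{lemma:compacityloc} combined with Lemma \ref{lemma:supported}, supported on $U_{[1,b]}^c$ and hence becomes trivial in $\st(A)(U_{[1,b]})$, so that the cospan
$$\xymatrix{ & sk^{[1,b]}_n(M') & \\ M \ar[ur]^{\tilde{g}} & & M' \ar[ul] }$$
provides a preimage of $g$. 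For \emph{faithfulness}, any $f \in \hom_{\st(A)(U_{[1,b]})}(M,M')$ may be represented via the calculus of fractions as such a cospan; vanishing of $f_{[1,b]}$ forces $\tilde{g}=0$ already in $\st(A)$, and hence $f=0$ in the Verdier quotient.

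The only non-formal input is verifying that the dual skeletal sequence $sk^{[1,b]}_n$ really does have cofibers supported on $U_{[1,b]}^c$, since this is what makes the calculus of fractions terminate at a finite $n$ and thereby allows the same cospan representation to detect both surjectivity and injectivity of $(-)_{[1,b]}$ on Hom spaces. This is the genuinely geometric content inherited from Margolis' explicit construction; once it is in hand, the remainder of the argument is a formal transcription of the preceding proof.
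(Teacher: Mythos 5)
Your proposal is correct and follows exactly the route the paper intends: the paper's own proof consists of the single remark ``The dual statement holds, for similar reasons,'' and you have faithfully carried out the dualization of Proposition~\ref{pro:comparisonMargolis}, replacing $[a,N]$ with $[1,b]$, bounded below with bounded above, and invoking the dual halves of Lemmas~\ref{lemma:compacityloc} and~\ref{lemma:locandcompacity}. As a minor bonus, your version correctly identifies the cofiber of $M' \to sk^{[1,b]}_n(M')$ as having Margolis homology concentrated on $[1,b]^c$ (hence supported on $F_{[1,b]^c}$), whereas the paper's proof of the $[a,N]$ case has a small slip at the analogous point.
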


The previous description leads to a concrete description of a piece of datum carried by $U_1 \cap U_2$-local maps, where $U_1$, $U_2$ are segmental opens.

\begin{lemma} \label{lemma:localmaps}
Let $U_1 = U_{[a,b]}$ and $U_2 = U_{[c,d]}$ be two segmental opens. Suppose that  $b<c$. Then, the map
$$ \mathrm{iso}_{\st(A)(U_1 \cap U_2)}(X,Y) \cong \hom_{\st(A)}(\Omega L_{[c,d]}X,L_{[a,b]}Y),$$
which sends $f:X \rightarrow Y$ on the edge of the corresponding gluing to $U_1 \cup U_2$, up to $U_1 \cup U_2$-isomorphism, is well defined.
\end{lemma}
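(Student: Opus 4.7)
The idea is to extract the edge map from the Mayer--Vietoris gluing of $L_{[a,b]}Y$ and $L_{[c,d]}X$ along $f$. Given $f:X\cong Y$ in $\st(A)(U_1\cap U_2)$, I would apply the gluing half of Proposition \ref{thm:mvglue} (with cover $\{U_1,U_2\}$ of $U_1\cup U_2$) to the pair $L_{[a,b]}Y\in\st(A)(U_1)$ and $L_{[c,d]}X\in\st(A)(U_2)$, clutched by the isomorphism on the intersection obtained from $f$. This produces a glued object $Z_f\in\st(A)(U_1\cup U_2)$, unique up to isomorphism, sitting in a Mayer--Vietoris cofiber sequence
\[
Z_f\longrightarrow L_{[a,b]}Y\oplus L_{[c,d]}X\longrightarrow L_{U_1\cap U_2}X.
\]
The connecting morphism of this sequence, tracked through the triangulated structure and combined with the $f$-induced identification of $L_{U_1\cap U_2}X$ with $L_{U_1\cap U_2}Y$, yields a morphism
\[
\tilde f:\Omega L_{[c,d]}X\longrightarrow L_{[a,b]}Y
\]
living a priori in $\hom_{\st(A)(U_1\cup U_2)}$.

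The next step is to promote $\tilde f$ to a morphism in $\st(A)$. Under the hypothesis $b<c$ the segments $[a,b]$ and $[c,d]$ are disjoint, and Proposition \ref{pro:comparisonMargolis} together with its dual realize $L_{[c,d]}X$ as a bounded below $A$-module and $L_{[a,b]}Y$ as a bounded above one, with complementary Margolis supports concentrated in $U_1\cup U_2$. Combined with the connectivity observation recorded in the introduction to Section \ref{sec:stacklocal} (there are no maps between a $U_1$-colocal and a $U_2$-local object in degree zero), this ensures that the Verdier quotient $\st(A)\to\st(A)(U_1\cup U_2)$ is fully faithful on the specific pair $(\Omega L_{[c,d]}X,L_{[a,b]}Y)$. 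Thus $\tilde f$ lifts canonically to a unique element of $\hom_{\st(A)}(\Omega L_{[c,d]}X,L_{[a,b]}Y)$.

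For well-definedness modulo $U_1\cup U_2$-isomorphism of the gluing, I would use the long exact sequence of Proposition \ref{thm:mvglue}. Naturality of the Mayer--Vietoris cofiber sequence implies that any $U_1\cup U_2$-isomorphism $Z_f\simeq Z_{f'}$ carries the connecting morphism of one gluing to that of the other, so $\tilde f=\tilde{f'}$. Conversely, two isos with equal edges differ by an element in the image of $\hom_{\st(A)(U_1)}\oplus\hom_{\st(A)(U_2)}$, which by exactness is the kernel of the connecting homomorphism and corresponds precisely to the ambiguity of gluings modulo $U_1\cup U_2$-iso.

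The technical heart, and main obstacle, is the lifting step: establishing the full faithfulness of $\st(A)\to\st(A)(U_1\cup U_2)$ on the pair $(\Omega L_{[c,d]}X,L_{[a,b]}Y)$. The hypothesis $b<c$ is indispensable here, as it produces the complementary boundedness structure from Margolis' construction; without disjointness of $[a,b]$ and $[c,d]$, the bounded above/below dichotomy collapses, the two Margolis-local modules fail to be orthogonal in the required sense, and the clean identification of hom-sets breaks down.
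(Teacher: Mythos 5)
Your first step agrees with the paper: view $f$ as clutching data and invoke the gluing half of Proposition \ref{thm:mvglue} to produce a $(U_1\cup U_2)$-local object $Z$. But from there the two arguments diverge, and your route has a genuine gap at exactly the point you flag as the ``technical heart''. You extract the edge as the connecting morphism of an object-level Mayer--Vietoris triangle, which a priori lives in $\hom_{\st(A)(U_1\cup U_2)}$, and you then need to lift it along the Verdier quotient $\st(A)\to\st(A)(U_1\cup U_2)$. The full faithfulness of this quotient on the pair $(\Omega L_{[c,d]}X, L_{[a,b]}Y)$ is asserted, not proved, and the ingredients you cite do not deliver it: Proposition \ref{pro:comparisonMargolis} compares $\st(A)(U_{[a,N]})$ with Margolis' local category, and the ``no maps from local to colocal'' remark concerns a different pairing. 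By the calculus of fractions, full faithfulness on this pair would require the vanishing of $\hom_{\st(A)}(C, L_{[a,b]}Y)$ for cones $C$ with Margolis homology concentrated off $[a,b]\cup[c,d]$, and disjointness of Margolis supports does not kill stable maps --- nonvanishing of such homs between modules with disjoint Margolis supports is precisely the extension problem the whole section is organized around. So the lifting step is not justified as written.

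The paper avoids this lifting problem entirely. Instead of forming the edge in the localized category, it applies Margolis' Postnikov tower to the glued object $Z$, obtaining a distinguished triangle $Z'_{[1,b]}\to Z\to Z_{[b+1,N]}$ \emph{in the category of bounded-below $A$-modules}; its edge $\Omega Z_{[b+1,N]}\to Z'_{[1,b]}$ is therefore already an honest morphism in $\st(A)^{nc}$, and the only remaining work is to identify $Z_{[b+1,N]}$ with $L_{[c,d]}X$ and $Z'_{[1,b]}$ with $L_{[a,b]}Y$ up to $U_1\cap U_2$-local isomorphism, using the filtered-colimit description of Lemma \ref{lemma:compacityloc}. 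If you want to salvage your version, you would need to replace the full-faithfulness claim by this Postnikov-tower construction (or prove an orthogonality statement strong enough to make the quotient functor bijective on the relevant hom-set, which is not available from the results you invoke).
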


\begin{proof}
Let's be a little more precise about the definition of this map. First, as $X$ and $Y$ are fixed $A$-modules in $\stinf(A)(U_1 \cap U_2)$, we might as well see a map between them as the gluing data between $X$ and $Y$, as seen in $\stinf(A)(U_1)$ and $\stinf(A)(U_2)$ respectively.

By the gluing of two objects(Proposition \ref{thm:mvglue}), we know that there exist a $(U_1 \cup U_2)$-local gluing $Z$. Now, Margolis' Postnikov towers gives us a distinguished triangle
$$  Z'_{[1,b]} \rightarrow Z \rightarrow Z_{[b+1,N]}$$
where $Z'_{[1,b]}$ denotes Margolis localization at $[1,b]$ in the category of bounded below modules. The edge of this cofibre sequence provides a map 
$$\Omega Z_{[b+1,N]} \rightarrow Z'_{[1,b]}.$$

To conclude, write both terms as filtered colimits using Lemma \ref{lemma:compacityloc}. Each term in this colimit is related to the terms apprearing in the colimit for $L_{[c,d]}X$ and $L_{[a,b]}Y$ respectively by $U_1 \cap U_2$-local isomorphisms. This ends the construction.
\end{proof}

\begin{ex}
Consider the situation where $U_1 = U_{[1,b]}$ and $U_2 = U_{[b+1,N]}$. Then, an $U_1 \cap U_2$-local isomorphism $f \in   \mathrm{iso}_{\st(A)(U_1 \cap U_2)}(X,Y)$ can be seen as the gluing datum of \begin{itemize}
\item the $U_1$-local object $X$,
\item the $U_2$-local object $Y$,
\item via the isomorphism $f$.
\end{itemize}
The associated map in $\hom_{\st(A)}(\Omega L_{[c,d]}X,L_{[a,b]}Y)$ is the edge of the corresponding gluing.
\end{ex}

\subsection{The segmental topology}

We know by Balmer-Favi \cite{Bal04} that the Zariski topology has too many covers for the assignment $U \mapsto \C(U)$ to be a stack in general. We now define a new topology, the segmental topology, which is more suitable to the study of Hopf algebras.

\begin{de}
Let $\spc_{seg}(A)$ be the set of prime ideals in $\st(A)$, together with the coverage:
\begin{equation*}
\bigcup_{k} U_{I_k} \rightarrow U_{[a,b]}
\end{equation*}
where $I_k$ is a partition of $[a,b]$. This coverage induces a Grothendieck topology on $\spc(A)$ we call the segmental topology.
\end{de}

\begin{rk}
We already know by Proposition \ref{pro:zariskicover} that these are in particular covers in the Zariski sense.
\end{rk}

\subsection{The stack of local modules}

We now turn to the proof of the main result of this section. This is a manifestation of the idea that the category of $A$-modules can be reconstructed by the local ones, where the local categories span all Margolis' operations. The purpose of Theorem \ref{thm:inftystack} is to make this precise. 

\begin{thm} \label{thm:inftystack}
The functor
$$Open(\spc(A))^{op} \rightarrow \text{stable}\dashmod\infty\dashmod\otimes\dashmod Cat,$$
which sends an open $U$ to $\stinf(A)(U)$ is a stack (of $\infty$-categories) for the segmental topology. 
\end{thm}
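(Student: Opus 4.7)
The plan is to verify the descent condition of Definition \ref{de:stack} by reducing to two-fold segmental covers and then applying the Balmer--Favi gluing (Proposition \ref{thm:mvglue}), whose hypotheses are automatic here by Corollary \ref{cor:mvspca}. The key to promoting the resulting triangulated gluing to a genuine $\infty$-categorical stack condition is the strict Margolis model supplied by Proposition \ref{pro:comparisonMargolis} and its dual, together with the description of gluing data in Lemma \ref{lemma:localmaps}.

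First I would observe that any segmental cover of $U_{[a,b]}$ is indexed by a finite partition of $[a,b]$ into subsegments, and that such covers generate the segmental topology. By induction on the number of pieces it suffices to establish the stack condition for a single two-fold segmental cover $U_{[a,b]} = U_1 \cup U_2$, for which the Cech nerve truncates to the two-step diagram, so that the descent condition becomes a homotopy pullback of $\infty$-categories.

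Next I would apply the criterion recalled after Definition \ref{de:descent}. For descent on morphism spectra, given $X, Y \in \stinf(A)(U_{[a,b]})$, the requirement is that the natural map from $\sthom_{U_{[a,b]}}(X,Y)$ to the homotopy fiber product
\begin{equation*}
\sthom_{U_1}(X,Y) \times_{\sthom_{U_1 \cap U_2}(X,Y)} \sthom_{U_2}(X,Y)
\end{equation*}
is an equivalence of spectra. This follows from the long exact sequence of Proposition \ref{thm:mvglue}(1) via the identification of Proposition \ref{pro:pihomandext}: the Mayer--Vietoris long exact sequence on $\stext$'s is precisely the homotopy long exact sequence of the target fiber product. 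For effectivity of descent data, a datum $(X_1, X_2, \sigma)$ yields by Proposition \ref{thm:mvglue}(2) a glued object $X \in \st(A)(U_{[a,b]})$ restricting to the $X_i$; the $\infty$-categorical enhancement is provided by Lemma \ref{lemma:localmaps}, which encodes $\sigma$ as a genuine map $\Omega X_2 \to X_1$ in $\st(A)$, whose cofiber, taken in the stable $\infty$-category $\stinf(A)$, realizes the glued object as a homotopy colimit rather than merely an isomorphism class.

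The main obstacle is that Balmer--Favi gluing is a priori only a statement in triangulated categories, where uniqueness holds up to non-canonical isomorphism, and this leaves contractibility of the space of descent data in doubt. What allows one to circumvent this is that Margolis' explicit Postnikov construction (Lemma \ref{lemma:compacityloc}) realizes the colocalization at $[a,c]$ and the localization at $[c+1,b]$ as strict functors of $\infty$-categories via Proposition \ref{pro:comparisonMargolis}. This promotes the triangulated Mayer--Vietoris square to an honest homotopy pullback square in $\stinf(A)$, so that the homotopy type of the space of lifts of a descent datum coincides with the expected mapping spectrum; this is exactly what the $\infty$-stack condition requires.
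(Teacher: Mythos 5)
Your plan follows the paper's proof essentially verbatim: induction on the number of opens in the segmental cover, with the two-open case handled by the Balmer--Favi Mayer--Vietoris sequence (for full faithfulness of $F_{desc}$ on mapping spectra, via the identification of Proposition \ref{pro:pihomandext}) and by Balmer--Favi gluing upgraded through the Margolis Postnikov cofiber description (for effectivity of descent data). The only point where the paper does visibly more work than your sketch is the uniqueness of gluings for covers by three or more opens, where it runs an explicit diagram chase using the cocycle condition $\phi_{1,2}\phi_{2,3}=\phi_{1,3}$ rather than appealing, as you do, to strictness of the Margolis localization functors to promote the Mayer--Vietoris square to a homotopy pullback.
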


\begin{rk}
Even if the philosophy of this result is present in Margolis' book \cite{Mar83}, note that the result gives more information about the stable categories, for two distinct reasons: \begin{itemize}
\item it involves the open
$$U_{[a,b]} \cap U_{[c,d]}$$
for example, which correspond to localization functors which where not previously considered in the litterature (in the foundational book \cite{Mar83}, or the memoir \cite{Pal01} for instance),
\item all the functors 
$$\stinf(A)(U) \rightarrow \stinf(A)(V)$$
appearing are actual localizations  (see Warning \ref{warning:colocalizations}).
\end{itemize}
\end{rk}

The proof of this theorem will occupy the rest of this section. We start by showing Proposition \ref{pro:es} and Proposition \ref{pro:ff}. The proof goes by induction, so fix a segmental cover of the segmental open $U$ by $n$ open sets, and suppose that Proposition \ref{pro:es} and Proposition \ref{pro:ff} holds for any such cover with less than $n$ open.

Recall Definition \ref{de:descent} for descent data, and how   descent data relates to the theorem, by definition of a stack \ref{de:stack}.

Fix a segmental cover $\{U_{\alpha} \rightarrow U\}$ until the end of the section. Note that because the intersection of segmental open are segmental open, we can assume that each $U_{\alpha}$ is a segmental open.

\begin{nota}
Rename the open $U_{\alpha}$ appearing in the fixed cover, in the following way: $U_i := U_{[i_k+1,i_{k+1}]}$, for $i=1 \hdots n$, and $i_0 = 0 < i_1 < \hdots < i_{n+1} = N$.
\end{nota}

We need to show that the canonical functor 
\begin{equation*}
F_{desc} : \stinf(A)(U) \rightarrow \holim_c \stinf(A)(c)
\end{equation*}
is an equivalence of categories.  We will show essential surjectivity (gluing of descent data) and fully faithfulness appart in the two following propositions.

\begin{pro} \label{pro:ff}
The functor $F_{desc}$ is fully-faithful.
\end{pro}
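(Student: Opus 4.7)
The plan is to verify that for all $X, Y \in \stinf(A)(U)$, the canonical comparison of mapping spectra
\[
\hom_{\stinf(A)(U)}(X,Y) \longrightarrow \holim_{c} \hom_{\stinf(A)(c)}(X,Y),
\]
indexed over the Cech nerve $C$ of the fixed segmental cover $\{U_i\}$, is a weak equivalence. By Proposition \ref{pro:pihomandext} this can be tested on $\stext^*$-groups, and the principal input will be the Balmer--Favi Mayer--Vietoris long exact sequence (Proposition \ref{thm:mvglue}), whose quasi-compactness hypotheses are automatic because $\spc(A)$ is finite by Proposition \ref{pro:finitespc}.

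I will induct on the number $n$ of opens in the segmental cover. The case $n=1$ is tautological. For $n=2$, applying Proposition \ref{thm:mvglue} to $\{U_1, U_2\}$ and passing to mapping spectra via Proposition \ref{pro:pihomandext} realises $\hom_{\stinf(A)(U)}(X,Y)$ as the homotopy pullback
\[
\hom_{\stinf(A)(U_1)}(X,Y) \times^h_{\hom_{\stinf(A)(U_1 \cap U_2)}(X,Y)} \hom_{\stinf(A)(U_2)}(X,Y),
\]
which agrees with the homotopy limit over the Cech nerve of a $2$-cover, higher simplices being redundant up to homotopy.

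For $n \geq 3$, I will set $V := U_1 \cup \cdots \cup U_{n-1}$, which remains a segmental open (its contiguous-interval parametrisation collapses to a single segment, by an argument parallel to Proposition \ref{pro:zariskicover}), and apply the $n=2$ case to the $2$-segmental-cover $\{V, U_n\}$ of $U$. The inductive hypothesis then applies to the size-$(n-1)$ segmental cover $\{U_1, \ldots, U_{n-1}\}$ of $V$ and to the induced cover $\{U_i \cap U_n\}_{i<n}$ of $V \cap U_n$; in the latter, Mayer-Vietoris still applies at every stage by Corollary \ref{cor:mvspca}, even though the intersections need not themselves be segmental opens. Iterated homotopy-pullback manipulations then assemble these pieces into the Cech homotopy limit for the original $n$-cover.

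The principal obstacle I anticipate is the bookkeeping needed to identify the iterated homotopy pullbacks produced by the induction with the honest homotopy limit over the full simplicial Cech nerve. This is essentially the general assertion that Cech descent for $2$-covers implies Cech descent for arbitrary finite covers, which is standard but easy to misstate if the face and degeneracy structure of the Cech nerve is not tracked with care.
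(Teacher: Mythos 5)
Your proof follows essentially the same route as the paper's: induction on the number of opens in the cover, with the base case $n=2$ supplied by the Balmer--Favi Mayer--Vietoris sequence of Proposition \ref{thm:mvglue} (applicable by finiteness of $\spc(A)$, Corollary \ref{cor:mvspca}) and the inductive step obtained by regrouping the cover as $\{U_1\cup\cdots\cup U_{n-1},\,U_n\}$. In fact your write-up is more careful than the paper's two-line argument, since you flag the points it leaves implicit (that the regrouped union is again a segmental open, and the identification of the iterated homotopy pullbacks with the full \v{C}ech homotopy limit); note that for fully-faithfulness the segmentality of the intermediate opens is not actually needed, because Corollary \ref{cor:mvspca} applies to any two-open cover.
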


\begin{proof}
We argue by induction on the number of open in the segment cover that $F_{desc}$ induces a $\pi_*$-isomomrphism
$$ \hom(X,X') \rightarrow \hom(F_{desc}X,F_{desc}X').$$

The case of two opens is exactly the gluing for morphisms provided by Corollary \ref{cor:mvspca}, and the five lemma.
To show the induction step, consider the cover $\{ (U_1, \hdots U_{n-1}), U_n \}$ of $U$.
\end{proof}

\begin{pro} \label{pro:es}
Gluing of descent data holds for the functor $F_{desc}$. Moreover, such gluings are unique up to equivalence.
\end{pro}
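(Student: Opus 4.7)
The plan is a simultaneous induction on $n$, the number of opens in the segmental cover $\{U_i \to U\}$, proving Propositions \ref{pro:ff} and \ref{pro:es} together (the paper already uses this inductive strategy for \ref{pro:ff}). The base case $n=2$ is Balmer--Favi's gluing: the second bullet of Proposition \ref{thm:mvglue}, combined with Corollary \ref{cor:mvspca}, constructs from any descent datum on a two-element cover a unique (up to equivalence) object of $\stinf(A)(U)$ restricting to it.

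For the inductive step on essential surjectivity, fix a descent datum $(X_i, \sigma_{i,j})$ on $\{U_i\}_{i=1}^n$. Because the cover comes from a partition of $[i_0+1, i_n]$, the union $U' := U_1 \cup \cdots \cup U_{n-1}$ is again a segmental open (namely $U_{[i_0+1, i_{n-1}]}$) and $\{U', U_n\}$ is a two-element segmental cover of $U$. Applying the induction hypothesis to the $(n-1)$-fold cover $\{U_i\}_{i<n}$ of $U'$ yields an object $X' \in \stinf(A)(U')$ whose restriction to each $U_i$, $i<n$, is equivalent to $X_i$ compatibly with the $\sigma_{i,j}$ for $i,j<n$.

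To invoke the base case on $\{U', U_n\}$, one must still produce a gluing equivalence $\tau$ between $X'|_{U' \cap U_n}$ and $X_n|_{U' \cap U_n}$. The restrictions of the $\sigma_{i,n}$ to $U_i \cap U_n$, together with the higher cocycle conditions of the original descent datum, assemble into a coherent descent datum \emph{for a morphism} between $X'|_{U' \cap U_n}$ and $X_n|_{U' \cap U_n}$ on the $(n-1)$-fold segmental cover $\{U_i \cap U_n\}_{i<n}$ of $U' \cap U_n$. By the induction hypothesis for Proposition \ref{pro:ff}, the mapping space $\hom_{\stinf(A)(U' \cap U_n)}(X'|_{U' \cap U_n}, X_n|_{U' \cap U_n})$ is computed as the homotopy limit of the $\hom_{\stinf(A)(U_i \cap U_n)}(X_i|_{U_i \cap U_n}, X_n|_{U_i \cap U_n})$, so this descent datum produces the desired $\tau$. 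That $\tau$ is an equivalence is detectable pointwise on the $U_i \cap U_n$, where it is so by hypothesis. Applying two-term Balmer--Favi gluing to $(X', X_n, \tau)$ yields the desired $X \in \stinf(A)(U)$, whose restriction to each $U_i$ realizes $(X_i, \sigma_{i,j})$.

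The principal obstacle is the coherent construction of $\tau$, which is exactly what forces the simultaneous induction on the two propositions; the technical input that makes the descent of the $\sigma_{i,n}$ tractable is the concrete Postnikov-tower description of local objects and maps given by Proposition \ref{pro:comparisonMargolis} (and its dual) and packaged in Lemma \ref{lemma:localmaps}. Uniqueness of the gluing then follows from Proposition \ref{pro:ff} at stage $n$: given two candidate gluings $X, Y \in \stinf(A)(U)$, the equivalences $X|_{U_i} \simeq X_i \simeq Y|_{U_i}$ assemble to a coherent element of $\holim_c \hom_{\stinf(A)(c)}(X|_c, Y|_c)$, which fully-faithfulness identifies with $\hom_{\stinf(A)(U)}(X,Y)$, yielding the required equivalence $X \simeq Y$.
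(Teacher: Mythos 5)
Your proposal is correct in substance, and its inductive skeleton for existence matches the paper's: both split the cover as $\{U_1\cup\cdots\cup U_{n-1},\,U_n\}$, glue over the union of the first $n-1$ opens by induction, manufacture a local isomorphism with $X_n$ over the overlap, and finish with the two-open Balmer--Favi gluing of Corollary \ref{cor:mvspca}. You diverge at the two delicate points. For the overlap isomorphism $\tau$, the paper merely asserts that ``gluing of morphisms'' yields a (possibly non-unique) local isomorphism and, in the $n=3$ model case, works concretely with the attaching maps $f_{i,j}$ coming from Margolis' Postnikov towers (Lemma \ref{lemma:localmaps}); you instead invoke descent for morphisms over the cover $\{U_i\cap U_n\}_{i<n}$ of $(U_1\cup\cdots\cup U_{n-1})\cap U_n$. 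Be careful here: these intersections are not literally segmental opens and the cover is not a segmental cover, so the induction hypothesis for Proposition \ref{pro:ff} does not apply verbatim; the step is rescued because the morphism half of Proposition \ref{thm:mvglue} holds for arbitrary covers by quasi-compact opens and every subset of the finite space $\spc(A)$ is quasi-compact (Proposition \ref{pro:finitespc}), but this should be said explicitly. For uniqueness your argument is genuinely different and cleaner: you deduce it formally from fully-faithfulness of $F_{desc}$ at stage $n$ (legitimate, since the paper proves Proposition \ref{pro:ff} by Mayer--Vietoris and the five lemma without appeal to Proposition \ref{pro:es}, so there is no circularity), whereas the paper runs an explicit diagram chase showing that the difference $g$ of the two candidate attaching maps $d,d'$ is trivial over $U_2$, factors through $L_{[1,n_1]}X_1$, and must vanish because both gluings realize the same edge map $f_{1,3}$. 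Your route buys brevity and avoids Postnikov-tower bookkeeping; the paper's buys an explicit description of the glued object as a cofiber of an attaching map, which is what is actually exploited later (Lemma \ref{lemma:localmaps}, the Picard computations). One small point to complete your uniqueness step: after lifting the coherent family of local isomorphisms to a morphism $X\to Y$, note that its cone restricts to zero on each $U_i$ and hence vanishes in $\st(A)(U)$ by Proposition \ref{pro:zariskicover} and Lemma \ref{lemma:margolisdetect}, so the lift is indeed an equivalence.
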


\begin{proof}
As essential surjectivity is checked in the homotopy categories, we will stay in $\st(A)(U) = ho(\stinf(A)(U))$ in this proof. \\

\textbf{Case $n = 2$:} Let $U_1, U_2$ be a cover of $U$, with associated segments $1 < i_1 < i_2 = N$.
Existence of the gluing is precisely the result of Corollary \ref{cor:mvspca}. However, in our situation, we have more: the existence of canonical Postnikov towers gives that $X$ is equivalent to the homotopy cofiber of the map
\begin{equation*}
f_{1,2} : \Omega L_{[i_1+1,N]} X_2 \rightarrow L_{[1,i_1]} X_1,
\end{equation*}
corresponding to the $(U_1 \cap U_2)$-local isomorphism given in the descent datum.

\textbf{Case $n = 3$:} Let $U_1, U_2, U_3$ be a cover of $U$, with associated segments $1 < n_1 < n_2 < n_3 = N$.
Suppose given $X_1$, $X_2$, and $X_3$, together with
 a $(U_i \cap U_j)$-local isomorphism $\phi_{i,j}$ for all $i < j$, satisfying the cocycle condition $\phi_{1,2} \phi_{2,3} = \phi_{1,3}$ in $U_1 \cap U_2 \cap U_3$.
 
In this situation, there is a lift $X_{12} \in \st(A)$ which is $U_i$-locally isomorphic to $X_i$, for $i=1,2$, and a $(U_1 \cup U_2)\cap U_3$-local isomorphism $\phi_{12,3} : X_{12} \cong X_3$. The gluing $X$ is obtained by the gluing of a cover by two open sets using the descent datum $\phi_{12,3} : X_{12} \cong X_3$.

We still have to prove uniqueness. Suppose that $X, X'$ are two gluings of  $X_1$, $X_2$, $X_3$ and their local isomorphisms.

Consider the stable morphisms $f_{i,j} : \Omega L_{[n_{j-1},n_j]} X_j \rightarrow L_{[n_{i-1},n_i]} X_i$
associated to the local isomorphisms $\phi_{i,j}$. Then $X$ and $X'$ enters the following diagram

\begin{equation*}
\xymatrix{ & L_{[1,n_1]} X_1 \ar[dl] \ar[dr] &  \\
L_{[1,n_2]}X_{12} \ar[r] \ar[d] & L_{[n_{1}+1,n_2]} X_2 \ar@{-->}[u]^{f_{1,2}} & L_{[1,n_2]}X'_{12} \ar[l] \ar[d]  \\
X \ar[r] &  L_{[n_{2}+1,N]} X_3 \ar@{-->}[u]^{f_{2,3}} \ar@{-->}[ul]^{d} \ar@{-->}[ur]_{d'} & X'. \ar[l]
}
\end{equation*}
Moreover, there is an isomorphism $\iota : L_{[1,n_2]}X_{12} \cong L_{[1,n_2]}X'_{12}$ which commutes with the solid arrows displayed in the diagram. There is an isomorphism between $X$ and $X'$ if and only if $d' = \iota d$. Let $g : \Omega L_{[n_{2}+1,N]} X_3 \rightarrow X'_{12}$ be the difference between these two maps. By construction $g$ is trivial over $U_2$, so the composite \begin{equation*}
\Omega L_{[n_{2}+1,N]} X_3 \stackrel{g}{\rightarrow} X'_{12} \rightarrow L_{[n_{1}+1,n_2]} X_2
\end{equation*}
is zero, and factors through $L_{[1,n_1]} X_1$, giving a map $\tilde{g} : \Omega L_{[n_{2}+1,N]} X_3 \rightarrow  L_{[1,n_1]} X_1$. But $X$ and $X'$ are gluing of the same descent datum. In particular, the two morphisms $f_{1,3} : \Omega L_{[n_{2}+1,N]} X_3 \rightarrow  L_{[1,n_1]} X_1$ associated to the $(U_1 \cap U_3)$-local isomorphism $\phi_{1,3} : X_1 \cong X_3$ agree, and thus $\tilde{g} = 0$. Consequently, the gluing $X$ is unique up to isomorphism.

\textbf{General case $n \geq 3$:}

Let $U_1, \hdots, U_n$ be a cover of $U$, with associated segments $1 < i_1 < \hdots < i_n = N$.
We show the general case by induction. Suppose that for every segment cover of $n$ elements, there exist a gluing, which is unique up to isomorphism. 

There is a unique gluing to $U_1 \cup \hdots \cup U_{n-1}$ of the descent datum over $U_1$ up to $U_{n-1}$, say $X_{1, \hdots ,n-1}$. Gluing of morphisms give a (possibly non-unique) $(U_1 \cup \hdots \cup U_{n-1}) \cap U_n$-local isomorphism
\begin{equation*}
X_{1, \hdots ,n-1} \cong X_n.
\end{equation*}
Gluing of two objects give a gluing $X$ on $U$.

Replacing $U_1 \cup U_2$ by $(U_1 \cup \hdots \cup U_{n-1})$ in the proof of the uniqueness of gluings for a cover by $3$ open give uniqueness of $X$, up to isomorphism.

\end{proof}

\begin{proof}[proof of Theorem \ref{thm:inftystack}]
The functor $$\st(A)(-) : Open(\spc(A))^{op} \rightarrow \infty\dashmod Cat$$ is well-defined by Proposition \ref{pro:inftyverdier}. Moreover, it is a stack if and only if the natural map $F_{desc} : \st(A)(U) \rightarrow Desc(\{U_{\alpha} \})$ is an equivalence of $\infty\dashmod\otimes$-categories. 

As limits in $\otimes$-categories are computed at the level of underlying categories, the result is equivalent to Proposition \ref{pro:es} and Proposition \ref{pro:ff}.
\end{proof}

\part{Applications}

\section{Picard groups}

\subsection{Spectrum of units}

\begin{de}
Let $\gm$ be the functor
$$ \gm : stable\dashmod\infty\dashmod\otimes Cat \rightarrow \infty\dashmod grpd$$
which sends a stable-$\infty\dashmod\otimes$-category to the $\infty$-groupoid of invertible objects and invertible arrows between them.
\end{de}

\begin{pro}
The functor $\gm$ commutes with homotopy limits and finite homotopy colimits. Moreover, for any presentable $\C \in stable\dashmod\infty\dashmod\otimes Cat$, we have identifications
\begin{eqnarray*}
\pi_0(\gm(\C)) &=& Pic(\C) \\
\pi_1(\gm(\C)) &=& Aut_{\C}(\mathbb{1}) \\
\pi_i(\gm(\C)) &=& \pi_{i-1}(\hom_{\C}(\mathbb{1}, \mathbb{1}) \\
\end{eqnarray*}

Where $\mathbb{1}$ denotes the unit in $\C$, and $Pic$ the Picard group of $\C$, that is the group of invertible elements in $\C$.
\end{pro}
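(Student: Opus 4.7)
The plan is to realize $\gm(\C)$ as the underlying infinite loop space of a connective spectrum of units canonically attached to $\C$, and to read off both the homotopy group identifications and the exactness properties from this description.

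First, I would give an explicit model for $\gm(\C)$: let $\C^{\times} \subset \C$ denote the full sub-$\infty$-category spanned by the $\otimes$-invertible objects, equipped with the inherited symmetric monoidal structure (in which every object is now invertible). Its underlying $\infty$-groupoid is then a group-like $E_\infty$-space, which by the recognition principle corresponds to a connective spectrum; this is $\gm(\C)$. This identification is essentially a definition-chase, but it is what makes the later homotopical manipulations possible.

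Second, I would read off the homotopy groups directly from the construction. By definition, $\pi_0(\gm(\C))$ is the set of equivalence classes of invertible objects, i.e. $\Pic(\C)$. Choosing $\un$ as basepoint, $\pi_i(\gm(\C), \un)$ for $i \geq 1$ is the $(i-1)$-st homotopy group of the loop space of $\gm(\C)$ at $\un$, and that loop space is by construction the space of self-equivalences of $\un$ in $\C$. This space of self-equivalences is the union of those connected components of $\hom_{\C}(\un,\un)$ consisting of invertible endomorphisms. For $i = 1$ this yields the group $\mathrm{Aut}_{\C}(\un)$, and for $i \geq 2$ the choice of connected component is immaterial for higher homotopy, so one obtains $\pi_{i-1}(\hom_{\C}(\un, \un))$.

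Third, I would verify commutation with homotopy limits by the pointwise argument: an object of $\holim_i \C_i$ is invertible if and only if each of its components is, so that $(-)^{\times}$ commutes with homotopy limits, as does passage to the underlying $\infty$-groupoid. Finally, the commutation with finite homotopy colimits is the main obstacle, since colimits in stable-$\infty$-$\otimes$-categories are not computed pointwise, so a naive pointwise strategy is unavailable. The plan here is to use the fact that $\gm$ lands in connective spectra, where finite colimits and finite limits agree up to suitable suspension, and to verify that the canonical map $\colim_i \gm(\C_i) \to \gm(\colim_i \C_i)$ is an isomorphism on each homotopy group by combining the identifications of the previous step with the universal properties governing colimits of presentable stable symmetric monoidal $\infty$-categories; alternatively, one reduces to the well-known preservation properties of the $\mathbb{G}_m$-functor on $E_\infty$-ring spectra through $\mathrm{End}_{\C}(\un)$.
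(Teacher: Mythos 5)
The paper's own proof is essentially a citation: it refers to Mathew--Stojanoska \cite{MS14}, Section~2, for the preservation properties, and remarks that the homotopy group identifications ``come directly from the definition.'' Your argument supplies actual content for both pieces. Your identification of the homotopy groups is correct and is the standard unwinding: the Picard $\infty$-groupoid has $\pi_0$ equal to isomorphism classes of invertibles, its loop space at $\un$ is the space of self-equivalences of $\un$ (a union of components of $\hom_{\C}(\un,\un)$), so $\pi_1$ is $\mathrm{Aut}_{\C}(\un)$ and the higher $\pi_i$ agree with $\pi_{i-1}(\hom_{\C}(\un,\un))$. Your proof of limit-preservation (invertibility in a limit of symmetric monoidal $\infty$-categories is checked levelwise, using essential uniqueness of the inverse to assemble the candidate inverses) is also correct and is the same argument that underlies the MS14 statement.

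The gap is in your treatment of finite colimits, and you have in fact half-noticed it. Your proposed strategy appeals to ``the fact that $\gm$ lands in connective spectra, where finite colimits and finite limits agree up to suitable suspension.'' That fact is false: connective spectra do not form a stable $\infty$-category (the loop functor leaves it, e.g.\ $\Omega\mathbb{S}$ is not connective), so finite limits and finite colimits there do not agree, with or without a suspension. Concretely, even if you knew a finite colimit $\D = \colim_i \C_i$ in stable-$\infty$-$\otimes$-Cat were also a finite limit (which is itself not clear), the known commutation with limits would give $\gm(\D) \simeq \lim_i \gm(\C_i)$ in connective spectra, and there is no stable-category argument converting that into $\colim_i \gm(\C_i)$. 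Your alternative route through $\mathrm{End}_{\C}(\un)$ has a separate problem: it only controls $\tau_{\geq 1}\gm(\C)$, since $\pi_0(\gm(\C)) = \Pic(\C)$ is genuinely not recoverable from the $E_\infty$-ring $\mathrm{End}_{\C}(\un)$ alone. So neither of your two sketches, as stated, establishes the finite-colimit claim. (For what it is worth, this is the least-used part of the proposition in the paper, whose Mayer--Vietoris application relies only on commutation with limits; but if you want to prove the statement as written, a different argument is needed for finite colimits.)
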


\begin{proof}
This was already observed in \cite[Section 2]{MS14}. The identifications of the homotopy groups of $\gm$ comes directly from the definition.
\end{proof}

\begin{thm} \label{thm:mayervietoris}
Let $A$ be a Margolis-Hopf algebra and $\{U_{\alpha} \rightarrow \spc(A)\}_{\alpha \in S}$ be a segmental covering.
There is a Mayer-Vietoris spectral sequence
$$ E_2^{s,t} \Rightarrow \pi_{t-s}(\gm(\stinf(A))$$
whose $E_2$-term is $$E_2^{s,t} = \bigoplus_{\alpha_0, \hdots \alpha_s \in S} \pi_{t}(\gm(\stinf(A)( \bigcap_{i=0}^s U_{\alpha_i})).$$
\end{thm}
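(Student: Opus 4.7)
The plan is to derive this Mayer-Vietoris spectral sequence as the Bousfield-Kan spectral sequence for the totalization of the \v{C}ech nerve of the given segmental covering, applied to the presheaf $\gm(\stinf(A)(-))$.

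First, I apply $\gm$ to the stack statement of Theorem \ref{thm:inftystack}. Since $\gm$ commutes with homotopy limits (by the proposition stated just before the theorem), the composite presheaf
$$U \mapsto \gm(\stinf(A)(U))$$
is itself an $\infty$-stack for the segmental topology, now valued in $\infty$-groupoids.

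Second, unpacking the descent condition with respect to the cover $\{U_{\alpha} \rightarrow \spc(A)\}_{\alpha \in S}$ and using that it is a covering in the segmental topology yields an equivalence
$$\gm(\stinf(A)) \simeq \holim_{[s] \in \Delta} \prod_{(\alpha_0, \ldots, \alpha_s) \in S^{s+1}} \gm\!\left(\stinf(A)\!\left(\bigcap_{i=0}^s U_{\alpha_i}\right)\right),$$
identifying the Picard space of $\stinf(A)$ with the totalization of the \v{C}ech cosimplicial space on the right.

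Third, I invoke the standard Bousfield-Kan spectral sequence for the homotopy groups of the totalization of a cosimplicial space. Its initial page is obtained by applying $\pi_t$ levelwise, giving exactly
$$\bigoplus_{(\alpha_0,\ldots,\alpha_s) \in S^{s+1}} \pi_t\!\left(\gm\!\left(\stinf(A)\!\left(\bigcap_{i=0}^s U_{\alpha_i}\right)\right)\right),$$
equipped with the \v{C}ech-type first differential (the alternating sum of cosimplicial coface maps induced by inclusions of intersections), and converging to $\pi_{t-s}$ of the totalization, yielding the claimed abutment to $\pi_{t-s}(\gm(\stinf(A)))$.

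The main step to verify is strong convergence and the vanishing of the usual $\lim^1$-type derived obstructions carried by Bousfield-Kan spectral sequences. For this I would appeal to Proposition \ref{pro:finitespc}: since $\spc(A)$ is finite, the segmental cover can be replaced by a finite one, so the \v{C}ech nerve is levelwise a finite product of spaces and vanishes above a finite cosimplicial degree. This Reedy-boundedness is enough to ensure strong convergence. A small secondary check is that $\gm$, as a right adjoint to inclusion of the $\infty$-subcategory of invertible objects and equivalences into the underlying $(\infty,1)$-category, genuinely preserves the homotopy limits involved in the descent equivalence of step one.
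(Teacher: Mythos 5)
Your proof follows essentially the same route as the paper's: apply $\gm$ to the stack equivalence of Theorem \ref{thm:inftystack}, use that $\gm$ preserves homotopy limits to express $\gm(\stinf(A))$ as the totalization of the \v{C}ech cosimplicial $\infty$-groupoid over the cover, and read off the Bousfield--Kan spectral sequence of that cosimplicial object. Your additional verification of strong convergence via Proposition \ref{pro:finitespc} (using finiteness of $\spc(A)$ to bound the cosimplicial degree) is a welcome refinement that the paper's terse proof leaves implicit.
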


\begin{proof}
Applying $\gm$ to the functor $$\gm : \C(U_{\alpha})^{op} \rightarrow \infty\dashmod grpd$$
where $\C(U_{\alpha})$ denotes the \v{C}ech nerve of the covering $\{U_{\alpha} \rightarrow U\}$ gives $\gm(\stinf(A))$ as the homotopy limit of 

\begin{align*}
\C(U_{\alpha}) & \rightarrow & \infty\dashmod grpd \\
V & \mapsto & \gm(\stinf(A)(V)) \\
\end{align*}

which can be computed via the bousfield-Kan spectral sequence of a cosimplicial simplicial set. This is precisely the desired spectral sequence.
\end{proof}

\subsection{Local detection}

\begin{pro} \label{pro:localdetection}
For all partition $1 = i_0 < i_1 < \hdots i_n = N$ of the Margolis homology in $A$, the natural map
$$\Pic(A) \rightarrow \bigoplus_{k=0}^{n-1} \Pic(U_{[i_k,i_{k+1}]})$$
is injective.
\end{pro}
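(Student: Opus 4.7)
The plan is to invoke the stack property of Theorem~\ref{thm:inftystack} applied to the segmental cover $\mathcal{U} = \{U_{[i_k,i_{k+1}]}\}_{k=0}^{n-1}$ of $\spc(A)$, which is indeed a cover in the segmental topology since the intervals $[i_k+1,i_{k+1}]$ partition $[1,N]$. Let $X \in \Pic(A)$ map to the trivial class in each $\Pic(U_{[i_k,i_{k+1}]})$, and pick local isomorphisms $\phi_k : X \simeq \un$ in each $\stinf(A)(U_{[i_k,i_{k+1}]})$; the goal is to conclude that $X \simeq \un$ in $\stinf(A)$.

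Since $\stinf(A)$ is recovered as the homotopy limit of its local pieces along the \v{C}ech nerve of $\mathcal{U}$, the descent datum associated to $X$ is transported by the $\phi_k$'s to one whose underlying local objects are all $\un$, with gluing isomorphisms on $U_k \cap U_l$ given by $\alpha_{kl} := \phi_k \phi_l^{-1} \in \mathrm{Aut}_{\stinf(A)(U_k \cap U_l)}(\un)$. Equivalently, through the Mayer--Vietoris spectral sequence of Theorem~\ref{thm:mayervietoris}, the map in the statement is the edge map $\Pic(A) \twoheadrightarrow E_\infty^{0,0} \hookrightarrow E_2^{0,0} = \bigoplus_k \Pic(U_{[i_k,i_{k+1}]})$, whose kernel is the first step of the associated filtration, with graded pieces that are subquotients of $E_2^{s,s} = \bigoplus_{|I|=s+1} \pi_s(\gm(\stinf(A)(U_I)))$ for $s \geq 1$.

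The key step is to show $\pi_s(\gm(\stinf(A)(V))) = 0$ for every $s \geq 1$ and every non-empty open $V$ appearing in the \v{C}ech nerve; for $s=1$ this reads $\mathrm{Aut}_{\stinf(A)(V)}(\un) = \{1\}$. The starting point is that in $\stinf(A)$ itself, $\hom_{\st(A)}(\un,\un) = \F$ because $\un = \F$ is not projective (the socle of $A$ sits in the augmentation ideal, so the identity does not factor through any free module), so that $\mathrm{Aut}_{\stinf(A)}(\un) = \F^\times = \{1\}$. I plan to propagate this triviality to the local categories, first for the half-open segmental opens $U_{[a,N]}$ and $U_{[1,b]}$ via their identification with Margolis-local modules (Proposition~\ref{pro:comparisonMargolis}), and then for arbitrary intersections by induction using the Mayer--Vietoris long exact sequence of Proposition~\ref{thm:mvglue} to constrain the low-degree endomorphism rings.

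The main obstacle is that the Verdier quotient $\stinf(A) \to \stinf(A)(V)$ can a priori enlarge endomorphism rings of $\un$ via fractions $f/s$ with $\mathrm{cone}(s)$ supported on $V^c$, so the fact that no new bidegree-$(0,0)$ endomorphisms of $\un$ are introduced requires a genuine argument in this setting. Once it is established, the \v{C}ech 1-cocycle $\{\alpha_{kl}\}$ is forced to be trivial (and the higher $E_\infty^{s,s}$ vanish for the same reason), so Theorem~\ref{thm:inftystack} yields the required global isomorphism $X \simeq \un$, hence injectivity.
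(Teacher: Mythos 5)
Your reduction is in the right spirit and parallels the paper's strategy: both proofs aim to kill the obstruction to injectivity coming from automorphisms of the unit in the local categories, using Mayer--Vietoris. But the proposal has a genuine gap exactly at the step you flag as ``the main obstacle.'' You correctly observe that showing $\mathrm{Aut}_{\stinf(A)(V)}(\un) = \{1\}$ for the relevant opens $V$ is the heart of the matter, since Verdier localization can \emph{a priori} create new endomorphisms of $\un$ via roofs $\un \leftarrow X \to \un$; you then write ``requires a genuine argument \ldots\ Once it is established \ldots'' and move on. That argument is precisely what the paper supplies, and without it the proof is incomplete: the paper uses the calculus of fractions (available on both sides for the Verdier quotient $\st(A)/\st(A)_{U^c}$) to represent an element of $\pi_1\gm(\stinf(A)(U_{[a,b]}))$ by a roof $\un \stackrel{s}{\leftarrow} X \stackrel{f}{\to} \un$ with $\mathrm{cone}(s),\mathrm{cone}(f)$ supported on $U^c$, notes that any such map induces an isomorphism on $H(-;p_c)$ for $a \leq c \leq b$, and then argues by a one-dimensionality count that there is at most one such isomorphism $X \to \un$ up to the equivalence in the calculus of fractions, forcing the roof to represent the identity. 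Your proposed alternative route --- propagating triviality through the Margolis models $\st(A)^{\geq}_{[a,N]}$ and then by induction --- is plausible but nowhere carried out, and in particular the induction step (controlling $\mathrm{Aut}(\un)$ over $U_{[a,b]}\cap U_{[c,d]}$ via the MV long exact sequence) is asserted rather than derived.

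There is a second, smaller issue. By using the full $n$-fold \v{C}ech spectral sequence you commit yourself to showing $\pi_s(\gm(\stinf(A)(V))) = 0$ for \emph{all} $s \geq 1$ on all iterated intersections: for $s \geq 2$ these are $\pi_{s-1}(\sthom_V(\un,\un)) \cong \stext_V^{1-s}(\un,\un)$, i.e.\ negative-degree Tate Ext groups in the local category, and the parenthetical ``(and the higher $E_\infty^{s,s}$ vanish for the same reason)'' is unjustified --- these groups are not automorphism groups of the unit and need not vanish for the same reason that $\mathrm{Aut}(\un)=\{1\}$. The paper sidesteps this by iterating the two-open gluing of Corollary~\ref{cor:mvspca}, so that at each stage only the $\pi_1\gm$ term of a binary intersection enters the long exact sequence; your spectral-sequence packaging is cleaner to state but imposes a strictly stronger vanishing requirement that you would either have to prove or route around by reverting to the two-step iteration.
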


\begin{proof}
The hypothesis on $U$ implies that the Verdier quotient $S/S_{U^c}$ has a calculus of fractions on both sides. Thus, the elements of $\pi_1(\gm(\stinf(A)(U)))$ are diagrams
$$ \xymatrix{ & X \ar[dl]_f \ar[dr]^g& \\ \mathbb{1} & & \mathbb{1} }$$
were $f$ and $g$ have cofiber supported on $U^c$.
Let $f : X \rightarrow \mathbb{1}$ be a map whose cofiber lies outside $U$. Suppose that $U$ correspond to some interval $[a,b]$. Then $f$ induces an isomorphism in $H(-;p_c)$, for all $a \leq c \leq b$. By dimensions, there is at most one such map when $X = \mathbb{1}$.
In general, if $f_1,f_2 : X \rightarrow \mathbb{1}$ are two isomorphisms over $U$, then $f_1 \circ f_2^{-1}$ is an isomorphism $\mathbb{1} \rightarrow \mathbb{1}$, so that $f_2$ is the inverse of $f_1$ and there is at most one map $X \rightarrow \mathbb{1}$ whose cofiber lies outside $U$. Thus, $\gm(U) = 0$. We conclude using that for all $V \subset U \subset \spc(S)$, $S(U)(V) = S(V)$, and repeatidly using the Mayer-Vietoris spectral sequence (long exact sequence here, since we are covering with two open) computing Picard groups.
\end{proof}

\begin{rk}
The work of Bob Bruner in \cite{Br14} gives a short exact sequence
$$ Pic(\stinf(\A(1))) \inj Pic(\stinf(\A(1))(U_{0}) \oplus Pic(\stinf(\A(1))(U_{1}) \surj \Z/4.$$
As we will see in the next section, this $\Z/4$ can be interpreted as a local Picard group as well ($Pic(\stinf(\A(1))(U_{0} \cap U_{1}))$). The fact that the sequence is right exact however seems to be particular to this case.
\end{rk}

\section{What happens for $\spc(\A(1))$?} \label{sec:toy}

We now turn to the study of the particular case of $\A(1)$. The objective of this section is to formulate some classical results about the stable category of modules over this specific Margolis-Hopf algebra in terms of tensor triangulated geometry, and to see how these classical results can be seen as consequences of our work.

\subsection{The spectrum of $\A(1)$}

Let's first compute the spectrum of $A=\A(1)$. Denote $S = \st(\A(1))$. It is well-known that the cohomology of $\A(1)$ can be given the following presentation

\begin{equation*}
H^{*,*}(\A(1)) = \frac{\F[v_0, \eta, \alpha, \beta]}{(v_0\eta, \eta^3, \eta \alpha, \alpha^2-v_0^2\beta)},
\end{equation*}
with $|v_0| = (1,1)$, $|\eta| = (1,2)$, $|\alpha| = (3,7)$, and $|\beta| = (4,12)$.

By Proposition \ref{pro:projadspec}, we have a homeomorphism

\begin{equation} \label{eqn:spcproj}
\spc(S) = \proj(H^{*,*}(\A(1))
\end{equation}

and this projective variety has three points:
$$S^{(1)} = (\eta, v_0),$$
$$S^{(0)} = (\eta,\alpha, \beta)$$
and 
$$S^{(01)} = (\eta).$$

\begin{pro}
Under the isomorphism \eqref{eqn:spcproj}, these correspond to the more familiar prime ideals (again denoted $S^*$):
\begin{itemize}
\item For $(v_0,\eta)$,
$$S^{(1)} \eq \st(\A(1))^{(1)}$$
the ideal consisting in $Q_1$-local modules.
\item For $(\eta,\alpha, \beta)$,
$$ S^{(0)} = \st(\A(1))^{(0)},$$
the ideal consisting of modules without Margolis $Q_1$-homology (or $Q_0$-local).
\item For $(\eta)$, the prime $S^{(01)}$ is the smallest thick $\otimes$-ideal containing the local modules. Thus it contains the finitely generated modules build from finitely generated $Q_0$-local modules and finitely generated $Q_1$-local modules. 
\end{itemize}
\end{pro}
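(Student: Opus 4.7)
My plan is to verify the proposition by enumerating the bihomogeneous primes of $H^{*,*}(\A(1))$ explicitly, and then matching each via the homeomorphism of Proposition \ref{pro:projadspec} with a concrete tt-prime, using the two primes $\P_1$ (of $Q_0$-acyclic modules) and $\P_2$ (of $Q_1$-acyclic modules) from Proposition \ref{de:primespi} as anchors.

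First I would list the non-irrelevant bihomogeneous primes of $H^{*,*}(\A(1))$. The relation $\eta^3 = 0$ forces $\eta$ into every prime, and $\alpha^2 = v_0^2\beta$ forces $\alpha\in\mathfrak{p}$ whenever $v_0\in\mathfrak{p}$ or $\beta\in\mathfrak{p}$. A brief case analysis on $v_0,\beta$ then produces exactly three radical candidates $(v_0,\eta)$, $(\eta,\alpha,\beta)$ and $(\eta)$, each verified to be prime by checking the quotient ring is a domain. This matches the three-point count of $\spc(\A(1))$.

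Next I would match the three tt-primes using Palmieri's nilpotence detection theorem (quoted just before Proposition \ref{de:margolisalg}): a class in $\ext_{\A(1)}^{*,*}(M,M)$ is nilpotent iff its restriction to every elementary sub-Hopf algebra of $\A(1)$ is nilpotent. The key computation is that $v_0 \in H^{*,*}(\A(1))$ restricts to $v_0 \neq 0$ on $H^{*,*}(\Lambda(Q_0))$ and to $0$ on $H^{*,*}(\Lambda(Q_1))$ (by bidegree, as $H^{1,1}(\Lambda(Q_1))=0$); hence $v_0 \cdot 1_M$ is nilpotent in $\ext_{\A(1)}^{*,*}(M,M)$ iff $v_0$ is nilpotent on $\ext_{\Lambda(Q_0)}^{*,*}(\un,M)$, which by identifying Margolis homology with the $v_0$-localisation of this $\ext$ is equivalent to $M$ being $Q_0$-acyclic. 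Symmetrically $\beta \mapsto v_1^4 \neq 0$ on $H^{*,*}(\Lambda(Q_1))$ and $\beta \mapsto 0$ on $H^{*,*}(\Lambda(Q_0))$, so $\beta \cdot 1_M$ is nilpotent iff $M$ is $Q_1$-acyclic. Interpreting the formula of Proposition \ref{pro:projadspec} up to the nilradical, one concludes that $(v_0,\eta)$ matches $\P_1$ and $(\eta,\alpha,\beta)$ matches $\P_2$. The remaining prime $(\eta)$, being strictly contained in both of the others, must correspond under the inclusion-reversing correspondence to the largest tt-prime, i.e. the smallest thick $\otimes$-ideal containing both $\P_1$ and $\P_2$, matching the stated description.

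The main obstacle is the passage from the literal statement "$\mathfrak{p} \subset \ker(H^{*,*}(\A(1))\to \ext(M,M))$" of Proposition \ref{pro:projadspec} to the nilpotent version used above: the two conditions differ in principle by a radical, but this is harmless because $\spc$ only sees the reduced structure, which is why Balmer's theorem is naturally formulated over the $\proj$ of the reduced cohomology ring. Once this point is addressed, the identification reduces to the explicit elementary-restriction computation outlined above.
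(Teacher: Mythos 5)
Your proposal reaches the same conclusion but by a genuinely different route than the paper. The paper's own argument is structural rather than computational: for each $i$, it notes directly that the tt-prime $S^{(i)}$ contains a concrete ``small'' $\A(1)$-module (namely $\Lambda(Q_0)$ for $i=1$, and $\A(1)//\Lambda(Q_0)$ for $i=0$), hence contains the $\otimes$-ideal generated by it, which is the category of $Q_i$-local compacts; the converse inclusion is obtained by chasing a single commutative square that compares the map $H^{*,*}(\A(1))\to\ext_{\A(1)}(X,X)$ with the analogous map after restriction to $\Lambda(Q_i)$. You instead enumerate the bihomogeneous primes of $H^{*,*}(\A(1))$ explicitly, invoke Palmieri's elementary-restriction nilpotence theorem, and carry out bidegree computations to see that $v_0$ restricts non-trivially only to $\Lambda(Q_0)$ and $\beta$ only to $\Lambda(Q_1)$ (with $\alpha$ forced by $\alpha^2 = v_0^2\beta$). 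What your approach buys is a fully algorithmic identification of points, with the correspondence $\P_1\leftrightarrow(v_0,\eta)$, $\P_2\leftrightarrow(\eta,\alpha,\beta)$ falling out of the restriction computation rather than being read off from knowing $\Lambda(Q_0)$ is $Q_1$-local; what the paper's approach buys is independence from the explicit presentation of $H^{*,*}(\A(1))$, which keeps it closer to the philosophy that ``everything is detected by Margolis restrictions.''

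Two small cautions. First, you flag the radical/kernel discrepancy in the formula of Proposition \ref{pro:projadspec}, and you are right to worry: the literal formula $\mathfrak{p}\subset\ker(H^{*,*}(A)\to\ext_A(M,M))$ and the nilpotence condition you use are not tautologically the same. Your resolution (that $\spc$ is insensitive to the nilradical, so one should read the formula through the support/Balmer picture) is essentially sound, but you should be explicit that you are replacing the kernel condition by $\mathfrak{p}\not\in\supp_{H^{*,*}(A)}(\ext_A(M,M))$, since that is the form in which the nilpotence detection theorem actually applies. Second, your last sentence conflates two distinct ideals: ``the largest tt-prime'' (which is $S^{(01)}$, by minimality of $(\eta)$ and order-reversal) is not \emph{a priori} the same as ``the smallest thick $\otimes$-ideal containing $\P_1$ and $\P_2$''; the former certainly contains the latter, and the equality (which the proposition asserts) needs a separate argument. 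The paper's own write-up only proves the containment as well, so this is not a defect relative to the source, but you should not pass it off as an ``i.e.''.
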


\begin{proof}
Let $i=0,1$. The proper prime ideal $S^{(i)}$ contains $\Lambda(Q_0)$ or $\A(1)//\Lambda(Q_0)$ (if $i=1$ or $0$ respectively). In particular, it contains all $Q_i$-local modules (little computation by hand for $i=0$).
Let $X$ be a $i$-local module. Consider the commutative diagram
$$\xymatrix{ H^{*,*}(\A(1)) \ar[r] \ar[d]& H^{*,*}(\Lambda(Q_i)) \ar[d] \\
\ext_{\A(1)}^{*,*}(X,X) \ar[r] & \ext_{\Lambda(Q_i)}(X,X) }. $$
The vertical right arrow being zero concludes.\\

The third ideal contains both the $Q_0$ and $Q_1$-local modules, thus it contains every finitely generated module built from finitely generated local modules.

\end{proof}

\begin{rk}
The characterization of $S^{(01)}$ might seem quite strange to the reader who is already familiar with the stable category of $\A(1)$-modules. Indeed, without the smallness assumption, the smallest thick subcategory of $\st(\A(1))$ containing both the $Q_0$-local modules  and the $Q_1$-local modules is the entire category $\st(\A(1))$ (this is a by-product of Margolis' work).

However, this is not the case in our situation. Indeed, any finite dimensional $Q_i$-local $\A(1)$-module is even dimensional. Consequently, if $M$ is $S^{(01)}$, then it is build from modules that are even dimensional, and $M$ itself must be even dimensional.

In particular, the stable module $\mathbb{1} \not\in S^{(01)}$. This shows that the prime ideal is not $S^{(01)}$.
\end{rk}

The spectrum of $S$ is given in Figure \ref{fig:spca1}.

\begin{figure} \label{fig:spca1}
\definecolor{qqqqff}{rgb}{0.3333333333333333,0.3333333333333333,0.3333333333333333}
\begin{tikzpicture}[line cap=round,line join=round,>=triangle 45,x=1.0cm,y=1.0cm]
\clip(3.5482058641580387,0.7464304190172949) rectangle (8.815897067920975,5.034871334901224);
\draw (5.,2.)-- (6.,4.);
\draw (7.,2.)-- (6.,4.);
\draw (5.861262578630868,4.798500575758015) node[anchor=north west] {$S^{(01)}$};
\draw (4.780710536833342,1.911400589080252) node[anchor=north west] {$S^{(0)}$};
\draw (6.891163743469134,1.9282842147333383) node[anchor=north west] {$S^{(1)}$};
\begin{scriptsize}
\draw [fill=qqqqff] (5.,2.) circle (1.5pt);
\draw [fill=qqqqff] (6.,4.) circle (1.5pt);
\draw [fill=qqqqff] (7.,2.) circle (1.5pt);
\end{scriptsize}
\end{tikzpicture}
\caption{The spectrum of $\st(\A(1))$. Closure goes down along lines.}
\end{figure}
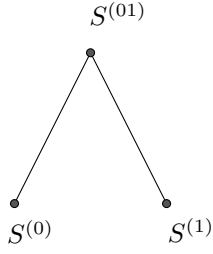

In particular, $S^{(01)}$ is a generic point of the spectrum.

\subsection{Localizations}

\begin{lemma}
There is a segmental cover of $\spc(S)$ by the open subsets $U_0 = \{ S^{(0)}, S^{(01)} \}$ and $U_1 = \{ S^{(1)}, S^{(01)} \}$.
The intersection $U_{01} = U_0 \cap U_1 = \{ S^{(01)} \}$.
\end{lemma}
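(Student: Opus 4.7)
My plan is to unpack the definition of segmental opens in the particular case $A = \A(1)$ and verify each claim directly. For $\A(1)$ there are exactly two Margolis operations $p_1 = Q_0$ and $p_2 = Q_1$, so $N = 2$ and the only nontrivial segments of $[1,N]$ are $[1,1]$, $[2,2]$ and $[1,2]$, with $U_{[1,2]} = \spc(\A(1))$ as in Proposition \ref{pro:zariskicover}. The two sets $U_0$ and $U_1$ appearing in the statement will be identified with $U_{[2,2]}$ and $U_{[1,1]}$ respectively.

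First I would compute $U_{[1,1]} = \spc(\A(1)) \setminus F_{\{2\}}$. By Lemma \ref{lemma:supported} (or directly from the definition of $F_{\{2\}}$), a prime $\P$ lies in $F_{\{2\}}$ precisely when it fails to contain some module whose Margolis homology is concentrated on $\{2\}$, that is, some $Q_0$-acyclic module. Using the descriptions of the three primes recalled in the previous subsection, $S^{(0)}$ consists of $Q_1$-acyclic modules, hence omits any generic $Q_0$-acyclic module (e.g.~$\Lambda(Q_0)$ itself already provides such a witness); whereas $S^{(1)}$ is by definition the ideal of $Q_0$-acyclic modules, and $S^{(01)}$ contains all local modules, so both contain every $Q_0$-acyclic module. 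This yields $F_{\{2\}} = \{S^{(0)}\}$ and therefore $U_{[1,1]} = \{S^{(1)}, S^{(01)}\} = U_1$. An entirely symmetric argument with $p_1$ and $p_2$ interchanged gives $U_{[2,2]} = \{S^{(0)}, S^{(01)}\} = U_0$.

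Finally, since $\{[1,1], [2,2]\}$ is a partition of $[1,N] = [1,2]$, the family $\{U_1 \to \spc(\A(1)), U_0 \to \spc(\A(1))\}$ is a segmental cover by the very definition of the segmental topology; the intersection $U_0 \cap U_1 = \{S^{(01)}\}$ is then immediate from the set-theoretic description.

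The only nontrivial bookkeeping step is identifying $F_{\{2\}}$ (and dually $F_{\{1\}}$), which amounts to exhibiting explicit $Q_0$-acyclic modules that are not in $S^{(0)}$ (respectively $Q_1$-acyclic modules not in $S^{(1)}$); for this one can simply take $\Lambda(Q_0)$ and $\A(1)//\Lambda(Q_0)$, which the previous subsection already singles out as generators of the two ideals.
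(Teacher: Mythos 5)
Your argument is correct, and it supplies exactly the verification the paper leaves implicit (the lemma is stated without proof there): identifying $U_1=U_{[1,1]}$ and $U_0=U_{[2,2]}$ by computing $F_{\{2\}}=\{S^{(0)}\}$ and $F_{\{1\}}=\{S^{(1)}\}$ from the explicit descriptions of the three primes, then invoking the definition of the segmental coverage for the partition $\{[1,1],[2,2]\}$ of $[1,2]$. The only cosmetic point is that your witness ``$\Lambda(Q_0)$'' should be read as the two-dimensional $\A(1)$-module it determines (the paper commits the same abuse of notation), which is indeed $Q_0$-acyclic but not $Q_1$-acyclic, so the computation of $F_{\{2\}}$ stands.
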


We will now use the machinery of localizations and gluing to study the stable category of $\A(1)$-modules.

\begin{lemma}
The Verdier quotient $S(U_i)$ is isomorphic to the full subcategory of $S$ whose objects are $L_i( \mathbb{1}) \otimes X$, for some finitely generated $X$, where $L_i$ denotes the {\it $Q_i$-colocalization}.
\end{lemma}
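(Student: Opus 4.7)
The plan is to combine Proposition \ref{pro:comparisonMargolis} and its dual with a smashing identification on the tensor unit. For $\A(1)$ we have $N=2$, with $p_1 = Q_0$ and $p_2 = Q_1$, so $U_0 = U_{[1,1]}$ and $U_1 = U_{[2,2]}$ are segmental opens of the special form $U_{[1,b]}$ (with $b=1$) and $U_{[a,N]}$ (with $a=2=N$) respectively. I would first apply Proposition \ref{pro:comparisonMargolis} and its dual to obtain equivalences
\[
S(U_0) \xrightarrow{(-)_{[1,1]}} \st(\A(1))^{\leq}_{[1,1]}, \qquad S(U_1) \xrightarrow{(-)_{[2,2]}} \st(\A(1))^{\geq}_{[2,2]},
\]
realized by Margolis' construction $(-)_{[i,i]}$ restricted to the compact category $S = \st(\A(1))$; by definition, the right-hand sides are the essential images of these restrictions.

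The main step is then to establish the smashing identification
\[
(X)_{[i,i]} \simeq L_i(\un) \otimes X
\]
naturally in compact $X \in S$, where I set $L_i(\un) := (\un)_{[i,i]}$ in accordance with the $Q_i$-colocalization terminology. Starting from the Bousfield unit (respectively counit) $\un \to L_i(\un)$ of Margolis' construction in bounded below (respectively bounded above) modules, tensoring with $X$ yields a natural map $X \simeq \un \otimes X \to L_i(\un) \otimes X$. By the K\"unneth formula (Lemma \ref{lemma:kunneth}), $L_i(\un) \otimes X$ has Margolis homology concentrated at $p_i$ with $H^*(L_i(\un) \otimes X, p_i) \cong H^*(X, p_i)$, and the natural map induces an isomorphism on $p_i$-Margolis homology. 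The Bousfield (co)localization universal property then forces this map to identify $L_i(\un) \otimes X$ with the Margolis localization $(X)_{[i,i]}$.

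Combining the two steps, the essential image of $(-)_{[i,i]}$ on $S$ is precisely the full subcategory of $\st(\A(1))^{nc}$ spanned by the objects $L_i(\un) \otimes X$ for $X \in S$, and this yields the stated identification of $S(U_i)$. The main obstacle is the smashing identification above: keeping track of the distinction between Bousfield localization in bounded below modules and Bousfield colocalization in bounded above modules is the only real subtlety, as the K\"unneth-plus-universal-property argument itself is routine. Alternatively, one could identify $L_i(\un)$ with the Balmer-Favi tensor idempotent $f(U_i^c)$ attached to the closed complement of $U_i$, for which the smashing property would be automatic from the general theory in \cite{BF11}.
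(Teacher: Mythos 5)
Your proposal is correct and follows essentially the same route as the paper: the paper's proof is a terse appeal to Lemma \ref{lemma:supported} together with Margolis' explicit localization (i.e.\ the content of Proposition \ref{pro:comparisonMargolis} and its dual, specialized to $U_{[1,1]}$ and $U_{[2,2]}$), supplemented by a ``down to earth'' variant that realizes objects of the quotient as fibers of the localization at $1-i$ and concludes by Margolis' freeness criterion. The only difference is that you make explicit, via the K\"unneth formula and the uniqueness of Bousfield (co)localization in the bounded-above/bounded-below categories, the smashing identification $X_{[i,i]}\simeq L_i(\un)\otimes X$ that the paper delegates to the explicit formula in \cite{Mar83}; this is a sound and welcome elaboration rather than a different argument.
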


\begin{proof}
This is a consequence of Lemma \ref{lemma:supported} and the explicit formula for localization given in \cite{Mar83}. Alternatively, here is a more down to earth approach. \\
We take the colocalization in both cases! Verdier localization at $S^{(i)}$ inverts the maps $f : X \rightarrow Y$ whose cone is $i$-local. In particular, one can choose a representative of any object to be the fiber of the localization at $1-i$. This defines a functor from the stable category to the local one which factors through the Verdier quotient. 

Now, this is an equivalence of categories by Margolis criterion. Note that we are allowed to use Margolis criterion in our situation because all the constructions considered in this proof stays in the category of bounded below-modules (respectively bounded-above modules) if $i = 0$ (respectively $i = 1$).
\end{proof}

\begin{pro}
The category $S(U_{01})$  is equivalent to the full subcategory of $S$ whose objects are $\mathbb{1}[x^{\pm 1}] \otimes X$ for a finitely generated $\A(1)$-module $X$.
\end{pro}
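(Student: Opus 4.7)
The plan is to reduce to the previous lemma by iterating the colocalization along the two Margolis operations. First, I observe that $U_{01} = U_0 \cap U_1$, and by Lemma \ref{lemma:supported} the subcategory $S_{(U_{01})^c}$ to be killed in forming the Verdier quotient $S(U_{01})$ is precisely the ideal $S^{(01)}$ of modules built from $Q_0$-local and $Q_1$-local modules. Since $S^{(01)}$ is the thick $\otimes$-ideal generated by $S_{U_0^c} \cup S_{U_1^c}$, this Verdier quotient can be realized as a two-step construction: first quotient $S$ by $S_{U_0^c}$ to obtain $S(U_0)$, then further quotient by the image of $S_{U_1^c}$ inside $S(U_0)$.

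Second, I would apply the previous lemma twice. Its first application identifies $S(U_0)$ with the full subcategory of $S$ spanned by $L_0(\un) \otimes X$. Applying the analogous identification to the further localization at $U_1$ inside $S(U_0)$ (which, again by Lemma \ref{lemma:supported}, amounts to killing modules without $Q_1$-homology) then shows that $S(U_{01})$ is spanned by objects of the form $L_1 L_0(\un) \otimes X$ for finitely generated $X$. One then sets $\un[x^{\pm 1}] := L_1 L_0(\un)$; the notation reflects the fact that the resulting module is unbounded both above and below, with a bi-infinite Laurent-polynomial-like pattern of periodicity classes coming from the two Margolis operations.

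The main obstacle is that each Margolis colocalization $L_i$ is a priori only defined on a bounded (above or below) subcategory, depending on the segment chosen, whereas $L_1 L_0(\un)$ is unbounded in both directions and hence lies outside either of Margolis' classical frameworks. I would handle this by working directly with the $\infty$-categorical Verdier quotient of Proposition \ref{pro:inftyverdier}, and verifying that the two colocalizations commute up to canonical equivalence (which follows because they correspond to smashing tensor-idempotents, and smashing idempotents always commute). Once this commutation is established, the identification of objects of $S(U_{01})$ with those of the form $\un[x^{\pm 1}] \otimes X$ follows from the essential-image description provided by the iterated lemma together with the projection formula.
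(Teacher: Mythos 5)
Your reduction of $S(U_{01})$ to an iterated Verdier quotient is fine in principle (for $V\subset U$ one has $S(U)(V)=S(V)$, and $S_{U_{01}^c}$ is indeed the prime $S^{(01)}$, by the definition of the support rather than by Lemma \ref{lemma:supported}, which only treats the closed sets $F_{[a,b]^c}$ attached to a single segment). But the step where you ``apply the previous lemma twice'' is where the argument breaks. The previous lemma is proved by Margolis' criterion, which rests on Proposition \ref{lemma:margolisdetect} and therefore only operates on modules that are bounded above or bounded below; the two colocalizations $L_0$ and $L_1$ live on subcategories of opposite boundedness type (this is exactly the point of Warning \ref{warning:colocalizations}: these are not Bousfield localizations, and the paper flags the opens $U_{[a,b]}\cap U_{[c,d]}$ as corresponding to functors \emph{not} covered by Margolis' constructions). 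Concretely, if $L_0(\un)$ is taken in the bounded-above category it is already $[Q_0]$-local, so a second literal application of Margolis' machinery either returns $0$ or is undefined; and the intended target, the bi-infinite lightning flash, is a nonzero module with vanishing Margolis homology in both operations, so the detection principle underlying the first application of the lemma is simply false in the doubly-unbounded world where $L_1L_0(\un)$ must live. The appeal to ``smashing idempotents always commute'' imports a rigidly-compactly-generated big category that the paper deliberately does not set up, and even granting it, it does not repair the detection problem.

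The second, independent gap is that even with the correct object $\un[x^{\pm 1}]$ in hand, identifying the essential image of $S(U_{01})$ (which by definition is an idempotent completion of the quotient) with the full subcategory $\{\un[x^{\pm1}]\otimes X\}$ requires knowing that no new summands appear, i.e., that every finitely generated $\un[x^{\pm1}]$-module splits as $\un[x^{\pm1}]\otimes X$. This is the actual content the paper supplies: it proves that $\un[x^{\pm1}]$ is a ``field'' using the periodicity $\Sigma^4\un[x^{\pm1}]\cong\un[x^{\pm1}]$ and the splitting of the tower induced by the degree filtration of $X$. The projection formula alone gives you a functor $X\mapsto\un[x^{\pm1}]\otimes X$, not surjectivity onto the quotient nor control of its morphism spaces. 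So your proposal follows a genuinely different route from the paper (iterated colocalization versus exhibiting the residue field and proving a splitting lemma), and in its current form the route does not close: you need either to construct $\un[x^{\pm1}]$ directly and prove the splitting statement, or to set up a big stable module category in which both Rickard idempotents exist and then still prove the splitting to identify the compact local objects.
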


This rely on the following fact:

\begin{lemma}
The module $\mathbb{1}[x^{\pm 1}]$ is a field. Precisely, any finitely generated $\mathbb{1}[x^{\pm 1}]$-module in $\st(\A(1))$ is of the form $\mathbb{1}[x^{\pm 1}]\otimes X$.
\end{lemma}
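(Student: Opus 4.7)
The plan is to exhibit $\mathbb{1}[x^{\pm 1}]$ as a graded field object in the local category $\st(\A(1))(U_{01})$ and then invoke the standard algebraic principle that modules over a graded field are free. Concretely, I would first compute the bigraded endomorphism ring $\pi_*\sthom_{\A(1)}(\mathbb{1},\mathbb{1})$ in this local category using Proposition \ref{pro:pihomandext}, which identifies it with $\stext^{-*,*}_{\A(1)}(\mathbb{1},\mathbb{1})$ localized to $U_{01}$. Since $U_{01}$ is the generic point of $\proj(H^{*,*}(\A(1)))$, this localization amounts to inverting every homogeneous class not lying in the prime $(\eta)$--namely $v_0$, $\alpha$ and $\beta$--and killing the nilpotent $\eta$. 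Using the relation $\alpha^2 = v_0^2 \beta$, the resulting ring is isomorphic modulo nilpotents to the bigraded Laurent ring $\F[v_0^{\pm 1}, \alpha^{\pm 1}]$, which is a graded field in the sense that every nonzero homogeneous element is invertible.

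Next, I would identify $\mathbb{1}[x^{\pm 1}]$ with the unit of $\stinf(\A(1))(U_{01})$. Margolis' explicit construction (Lemma \ref{lemma:compacityloc}) together with Proposition \ref{pro:verdierloc} realizes it as a filtered colimit of compact objects obtained by iteratively smashing with the cofibres that invert $v_0$ and $\alpha$. This identification ensures that its graded ring of self-maps is precisely the graded field computed above.

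The final step is to conclude. Given a finitely generated $\mathbb{1}[x^{\pm 1}]$-module $M$ in $\st(\A(1))$, choose a finite set of bihomogeneous generators to produce a surjection $\bigoplus_i \Omega^{-n_i}\Sigma^{m_i}\,\mathbb{1}[x^{\pm 1}] \surj M$ of $\mathbb{1}[x^{\pm 1}]$-modules. Since the endomorphism ring is a graded field, the kernel is automatically free, and $M$ splits as a direct sum $\bigoplus_i \Omega^{-n_i}\Sigma^{m_i}\,\mathbb{1}[x^{\pm 1}]$. Rewriting this as $\mathbb{1}[x^{\pm 1}] \otimes X$ for $X := \bigoplus_i \Omega^{-n_i}\Sigma^{m_i}\,\mathbb{1}$, a finitely generated $\A(1)$-module, completes the argument.

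The main obstacle will be to make the graded-field principle truly rigorous in this mixed bigraded, tensor-triangulated setting. Specifically, one must ensure that a splitting of $M$ assembled inside $\st(\A(1))(U_{01})$ really does descend to a tensor product $\mathbb{1}[x^{\pm 1}] \otimes X$ with $X$ lifting to $\st(\A(1))$, and one must keep careful track of the interaction between the two gradings--in particular the $\Z/4$ torsion in the Picard group identified by Bruner--when choosing representatives of the bihomogeneous generators.
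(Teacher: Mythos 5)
Your route is genuinely different from the paper's. The paper's proof is more elementary and direct: it observes that every nonzero self-map of $\mathbb{1}[x^{\pm 1}]$ is the identity up to the $\Sigma^4$-periodicity isomorphism --- a degree-zero $\hom$ computation rather than the full bigraded endomorphism ring you plan to compute --- and then exhibits an explicit splitting of $\mathbb{1}[x^{\pm 1}]\otimes X$ by filtering a finite-dimensional $X$ by internal degree and observing that the resulting tower has split extensions at every stage, thanks to the $\Omega$-periodicity of $\mathbb{1}[x^{\pm 1}]$. In other words, it is a d\'evissage argument that never invokes a general ``modules over a graded field are free'' principle. Your plan is in the right spirit --- the local endomorphism ring does come out to the bigraded field $\F[v_0^{\pm 1},\alpha^{\pm 1}]$, and the unit of $\stinf(\A(1))(U_{01})$ is indeed $\mathbb{1}[x^{\pm 1}]$ --- but the difficulty you flag at the end is exactly where the work lies: the step ``choose bihomogeneous generators, the kernel is automatically free'' is module theory over an ordinary graded ring and does not transfer directly to a module over a commutative algebra object in a stable $\infty$-category; making it rigorous essentially forces one back into a tower/filtration argument of precisely the sort the paper runs. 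You should also justify why the morphism groups in the Verdier quotient are literally the algebraic localization of $H^{*,*}(\A(1))$ at $(\eta)$ rather than merely mapping to it; this follows from the colimit formula of Lemma \ref{lemma:locandcompacity} but deserves a line. Your approach buys conceptual clarity and matches the standard treatment of Morava $K$-theories; the paper's approach buys brevity and stays closer to Margolis' explicit constructions.
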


\begin{proof}
The first observation is that any non trivial map $Hom_{\A(1)}(\mathbb{1}[x^{\pm 1}],\mathbb{1}[x^{\pm 1}])$ is the identity via the periodicity isomorphism $\Sigma^4\mathbb{1}[x^{\pm 1}]=\mathbb{1}[x^{\pm 1}]$. 
Let $X$ be a finite dimensional $\A(1)$-module. By $\Omega$-periodicity of $\mathbb{1}[x^{\pm 1}]$, the filtration of $X$ by degree gives a tower whose limit is $\mathbb{1}[x^{\pm 1}] \otimes X$ which splits.
\end{proof}

\subsection{Local detection}

Consider now the segmental cover
$$ \xymatrix{ U_0 \cap U_1 \ar[d] \ar[r] & U_0 \ar[d]\\
U_1 \ar[r] & U_0 \cap U_1. }$$

This gives rise to half a long exact sequence

\begin{eqnarray*}
\hdots \rightarrow \gm( S) \rightarrow \gm(S^{(1)}) \oplus \gm(S^{(0)}) \rightarrow \gm( S^{(01)}) \rightarrow Pic(S)  \\\rightarrow Pic(S^{(0)}) \oplus Pic(S^{(1)}) \rightarrow Pic(S^{(01)})
\end{eqnarray*}
$$ $$

But all $\gm$ are zero (these are automorphisms of the unit $\un$). Thus

\begin{pro}
There is an exact sequence
$$ Pic(S) \inj Pic(S^{(0)}) \oplus Pic(S^{(1)}) \rightarrow Pic(S^{(01)}).$$
\end{pro}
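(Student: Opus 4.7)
The plan is to apply the Mayer--Vietoris spectral sequence of Theorem \ref{thm:mayervietoris} to the two-open segmental cover $\{U_0, U_1\}$ of $\spc(S)$ established in the preceding lemma. Because the cover has only two opens (all iterated intersections coincide with $U_{01}$), the spectral sequence collapses into a Mayer--Vietoris long exact sequence of homotopy groups of $\gm$:
\begin{equation*}
\cdots \to \pi_1 \gm(S(U_{01})) \stackrel{\partial}{\to} \pi_0 \gm(S) \to \pi_0 \gm(S(U_0)) \oplus \pi_0 \gm(S(U_1)) \to \pi_0 \gm(S(U_{01})).
\end{equation*}
Using the identification $\pi_0 \gm(\C) = \Pic(\C)$ from the preceding proposition on homotopy groups of $\gm$, this reads precisely as
\begin{equation*}
\pi_1 \gm(S(U_{01})) \stackrel{\partial}{\to} \Pic(S) \to \Pic(S(U_0)) \oplus \Pic(S(U_1)) \to \Pic(S(U_{01})),
\end{equation*}
so the target sequence will follow once I establish that $\partial$ is zero, which is implied by the vanishing $\pi_1 \gm(S(U_{01})) = 0$.

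The key remaining step is therefore to show that $\mathrm{Aut}_{S(U_{01})}(\un) = 1$, since $\pi_1 \gm(\C) = \mathrm{Aut}_\C(\un)$. This group is the units of the graded ring $\hom_{S(U_{01})}(\un,\un)$; by Proposition \ref{pro:pihomandext}, this ring is the local Tate cohomology obtained by localizing $\stext^{*,*}_{\A(1)}(\un,\un)$ at $U_{01}$. The invertible-element group is controlled by the bidegree $(0,0)$ component. Before localization this is $H^{0,0}(\A(1)) = \F_2$, and inverting $\eta$ (the prime corresponding to $U_{01}$ under the identification $\spc(S) \cong \proj(H^{*,*}(\A(1)))$) cannot introduce new elements in bidegree $(0,0)$ because $\eta$ lives in strictly positive bidegree. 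Hence the bidegree $(0,0)$ part stays $\F_2$, whose only unit is $1$, giving $\mathrm{Aut}_{S(U_{01})}(\un) = \F_2^\times = 1$.

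The main obstacle is really only this last step: justifying rigorously that Tate localization at a positive-degree element does not create degree-$0$ units. Once that is in hand, the proposition is a direct reading of the Mayer--Vietoris long exact sequence; the analogous vanishing at $U_0$ and $U_1$ is not needed for the injection and the middle exactness stated, so no further input is required beyond Theorem \ref{thm:mayervietoris} and the elementary characteristic-two observation above.
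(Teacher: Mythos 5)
Your high-level strategy is the same as the paper's: apply the Mayer--Vietoris spectral sequence (which for a two-open cover degenerates into a long exact sequence) and then establish that the term just to the left of $\Pic(S)$ vanishes. The paper also asserts ``all $\gm$ are zero (these are automorphisms of the unit $\un$),'' which is exactly the $\pi_1 \gm(S(U_{01}))=0$ claim on which you pin the injectivity.

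However, the justification you give for this vanishing has a genuine gap. First, $\eta$ is \emph{nilpotent} in $H^{*,*}(\A(1))$ (the relation $\eta^3=0$ appears in the presentation), so you cannot make sense of ``inverting $\eta$''; what corresponds to the open one-point subset $U_{01}=\{(\eta)\}$ is localization \emph{at} the prime $(\eta)$, i.e.\ one inverts the homogeneous elements \emph{not} in $(\eta)$, a completely different operation. Second, even setting aside the nilpotence, the broader claim that inverting a positive-bidegree element cannot produce new bidegree-$(0,0)$ elements is simply false: any fraction $a/s^n$ with $\deg a = n\deg s$, $s\notin(\eta)$, lands in degree $(0,0)$, and such fractions can certainly be new. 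Third, the passage from $S$ to $S(U_{01})$ is a Verdier quotient, and its degree-zero hom group is a filtered colimit over roofs; a priori it need not be controlled by a naive localization of the bigraded ring. The way the paper actually closes this gap is via the explicit identification of $S(U_{01})$ with the full subcategory of modules of the form $\un[x^{\pm 1}]\otimes X$, together with the observation that periodicity of $\un[x^{\pm 1}]$ forces any nontrivial degree-zero endomorphism of the local unit to be the identity; that periodicity computation, and not any statement about localizing at $\eta$, is what gives $\operatorname{Aut}_{S(U_{01})}(\un)=\{1\}$. You would need to invoke or reprove this explicit description (and the analogous facts for $S(U_0)$ and $S(U_1)$ if one wants the full vanishing the paper claims) to make the argument rigorous.
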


\bibliographystyle{alpha}
\bibliography{biblio}

\end{document}